\newtheorem{theo}{Theorem}[section]
\newtheorem{defi}[theo]{Definition}
\newtheorem{lem}[theo]{Lemma}
\newtheorem{prop}[theo]{Proposition}
\newtheorem{rem}[theo]{Remark}
\newtheorem{coro}[theo]{Corollary}
\newtheorem{example}[theo]{Example}
\newcommand{\Agot}{\ensuremath{\mathfrak{A}}}
\newcommand{\kgot}{\ensuremath{\mathfrak{k}}}
\newcommand{\hgot}{\ensuremath{\mathfrak{h}}}
\newcommand{\ggot}{\ensuremath{\mathfrak{g}}}
\newcommand{\ngot}{\ensuremath{\mathfrak{n}}}
\newcommand{\tgot}{\ensuremath{\mathfrak{t}}}
\newcommand{\qgot}{\ensuremath{\mathfrak{q}}}
\newcommand{\agot}{\ensuremath{\mathfrak{a}}}
\newcommand{\bgot}{\ensuremath{\mathfrak{b}}}
\newcommand{\Rgot}{\ensuremath{\mathfrak{R}}}
\newcommand{\Sgot}{\ensuremath{\mathfrak{S}}}
\newcommand{\Acal}{\ensuremath{\mathcal{A}}}
\newcommand{\Bcal}{\ensuremath{\mathcal{B}}}
\newcommand{\Fcal}{\ensuremath{\mathcal{F}}}
\newcommand{\Lcal}{\ensuremath{\mathcal{L}}}
\newcommand{\Ocal}{\ensuremath{\mathcal{O}}}
\newcommand{\Pcal}{\ensuremath{\mathcal{P}}}
\newcommand{\Scal}{\ensuremath{\mathcal{S}}}
\newcommand{\Ucal}{\ensuremath{\mathcal{U}}}
\newcommand{\RR}{\ensuremath{\mathrm{RR}}}
\newcommand{\Nbb}{\ensuremath{\mathbb{N}}}
\newcommand{\Cbb}{\ensuremath{\mathbb{C}}}
\newcommand{\Rbb}{\ensuremath{\mathbb{R}}}
\newcommand{\Zbb}{\ensuremath{\mathbb{Z}}}
\newcommand{\T}{\ensuremath{\hbox{\bf T}}}
\newcommand{\ind}{\mathrm{Ind}}
\def \wR {{\widehat{R}}}
\def \clif {\mathbf{c}}
\date{January 2024}
\author[Paul-\'Emile Paradan]{Paul-\'Emile Paradan}
\address{IMAG, Univ Montpellier, CNRS, Montpellier, France} 
\email{paul-emile.paradan@umontpellier.fr}
\keywords{branching laws, moment map, non-abelian localization}
\title{Symmetric pairs and branching laws}
\begin{document}

\begin{abstract}
Let $G$ be a compact connected Lie group and let $H$ be a subgroup fixed by an involution. A classical result assures that the $H_\Cbb$-action on 
the flag variety $\Fcal$ of $G$ admits a finite number of orbits. In this article we propose a formula for the branching coefficients of the symmetric pair $(G,H)$ that is parametrized by $H_\Cbb\backslash \Fcal$.
\end{abstract}

\maketitle

\tableofcontents

\section{Introduction}

Let $G$ be a compact connected Lie group equipped with an involution $\theta$. Let  $G^{\theta}:=\{g\in G, \theta(g)=g\}$ be the subgroup 
fixed by the involution. We consider a subgroup $H\subset G$ such that $(G^{\theta})_0\subset H\subset G^{\theta}$. 
The purpose of this paper is the study of the branching laws between $G$ and $H$.

Let $T$ be a maximal torus of $G$ that we choose $\theta$-invariant. Let $\tgot$ be the Lie algebra of $T$.
Let $\Lambda\subset\tgot^*$ be the lattice of weights, and let $\tgot_{+}^*$ 
be a Weyl chamber. The irreducible representations of $G$ are parametrized by the semi-group $\Lambda_+:=\Lambda\cap\tgot^*_{+}$ 
of dominant weights. 

Let $\lambda\in \Lambda_+$. In order to study the restriction $V_{\lambda}^{G}\vert_{H}$ of the irreducible $G$-representation $V_{\lambda}^{G}$, we consider the $H$-action on the flag variety $\Fcal=G/T$ of $G$. An important object is the $H$-invariant subset 
$$
Z_\theta\subset \Fcal
$$ 
formed of the elements $x\in \Fcal$ for which the stabilizer subgroup $G_x:=\{g\in G, gx=x\}$  is stable under $\theta$. In other words,
$gT\in Z_\theta$ if and only if $g^{-1}\theta(g)$ belongs to the normalizer subgroup $N(T)$.  A well-known result tells us that the group $H$ has finitely many 
orbits in $Z_\theta$, and that the map $\Ocal\in H_\Cbb\backslash\Fcal\longmapsto \Ocal\cap Z_\theta \in H\backslash Z_\theta$ 
is bijective \cite{Matsuki79,Rossmann79,Richardson-Springer90,Mi-Vi-Uz}.

Let $x\in Z_\theta$.  The stabilizer subgroup $G_{x}$ is a maximal torus in $G$, stable under $\theta$, with Lie algebra $\ggot_{x}$. 
We will also consider the abelian subgroup $H_{x}:=G_x\cap H$ (that is not necessarily connected). Any weight  $\mu\in\Lambda$ 
determines a character $\Cbb_{\mu_x}$ of the torus $G_{x}$ by taking $\mu_x=g\cdot\mu$ if $x=gT\in \Fcal$.

We denote by $\Rgot_x\subset \ggot^*_x$ the set of roots relative to the action of the Cartan subalgebra $\ggot_x$ on $\ggot\otimes\Cbb$. The map 
$\mu\in\Rgot\mapsto \mu_x\in\Rgot_x$ is an isomorphism, and we take $\Rgot_x^+\subset\Rgot_x$ as the image of $\Rgot^+\subset\Rgot$ through this isomorphism.

The involution $\theta$ leaves the set $\Rgot_x$ invariant, and $\alpha \in\Rgot_x$ is an {\em imaginary root} if $\theta(\alpha)=\alpha$. 
If $\alpha$ is imaginary, the subspace $(\ggot\otimes\Cbb)_\alpha$ is $\theta$-stable. There are two cases. If the action of $\theta$ on  
$(\ggot\otimes\Cbb)_\alpha$ is trivial then $\alpha$ is {\em compact imaginary}.  If the action of $-\theta$ on  
$(\ggot\otimes\Cbb)_\alpha$ is trivial, then $\alpha$ is {\em non-compact imaginary}. We denote respectively by $\Rgot_x^{\hbox{\scriptsize ci}}$ and 
by $\Rgot_x^{\hbox{\scriptsize nci}}$ the subsets of compact imaginary and non-compact imaginary roots, and we introduce the following $G_x$-modules
$$
\mathbb{E}^{\hbox{\scriptsize ci}}_{x}:=\sum_{\alpha\in\Rgot^{^{\rm ci}}_x\cap\Rgot^{+}_{x}}(\ggot\otimes\Cbb)_\alpha,
\quad
\quad
\mathbb{E}^{\hbox{\scriptsize nci}}_{x}:=\sum_{\alpha\in\Rgot_x^{^{\rm nci}}\cap\Rgot^{+}_{x}}(\ggot\otimes\Cbb)_\alpha.
$$

The weight 
$$
\delta(x):=\frac{1}{2}\sum_{\stackrel{\alpha\in\Rgot_{x}^{+}\cap\theta(\Rgot_{x}^{+})}{\theta(\alpha)\neq\alpha}}\alpha 
$$ 
defines a character $\Cbb_{\delta(x)}$ of the abelian group $H_x$. 

We denote by $R(H)$ and by $R(H_{x})$ the representations rings of the compact Lie groups $H$ and $H_{x}$. An element $E\in R(H)$ can be represented
as a {\em finite} sum $E=\sum_{V\in\widehat{H}}m_V V$, with $m_V\in\Zbb$. We denote by $\widehat{R}(H)$ (resp. $\widehat{R}(H_{x})$) the space 
of $\Zbb$-valued functions on $\widehat{H}$ (resp. $\widehat{H_{x}}$). An element $E\in \widehat{R}(H)$ can be represented as an {\em infinite} sum 
$\sum_{V\in\widehat{H}}m_V V$, with $m_{V}\in\Zbb$. The induction map ${\rm Ind}^{H}_{H_{x}}: \widehat{R}(H_{x})\to \widehat{R}(H)$ is the dual of the restriction morphism $R(H)\to R(H_{x})$.

Let $m_x= \frac{1}{2}| \Rgot_{x}^{+}\cap\theta(\Rgot_{x}^{+})\cap\{\theta(\alpha)\neq\alpha\}| + \dim \mathbb{E}^{\hbox{\scriptsize\rm nci}}_{x}\in\mathbb{N}$.

\medskip

The main result of this paper is the following theorem.

\begin{theo}\label{theo-principal}
Let $\lambda\in\Lambda_+$. We have the decomposition 
\begin{equation}\label{eq:theoreme-principal}
V_{\lambda}^{G}\vert_{H}=\sum_{Hx\,\in\, H\backslash Z_{\theta}} Q_{Hx}(\lambda)
\end{equation}
 where the terms $Q_{Hx}(\lambda)\in \widehat{R}(H)$ are defined by the following relation :
$$
Q_{Hx}(\lambda)=(-1)^{m_{x}} {\rm Ind}^{H}_{H_{x}}\left(\Cbb_{\lambda_x+\delta(x)}\otimes 
\det(\mathbb{E}^{\hbox{\scriptsize\rm nci}}_{x})\otimes {\rm Sym}(\mathbb{E}^{\hbox{\scriptsize\rm nci}}_{x})\otimes\bigwedge 
\mathbb{E}^{\hbox{\scriptsize\rm  ci}}_{x}\right).
$$
Here ${\rm Sym}(\mathbb{E}^{\hbox{\scriptsize\rm nci}}_{x})$, which is the symmetric algebra of 
$\mathbb{E}^{\hbox{\scriptsize\rm nci}}_{x}$, is an admissible representation of $H_x$ and $\bigwedge \mathbb{E}^{\hbox{\scriptsize\rm  ci}}_{x}=
\bigwedge^{+}\mathbb{E}^{\hbox{\scriptsize\rm  ci}}_{x}\ominus\bigwedge^{-} \mathbb{E}^{\hbox{\scriptsize\rm  ci}}_{x}$ is a virtual representation of $H_{x}$.
\end{theo}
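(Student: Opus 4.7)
The plan is to realize the $G$-representation $V_\lambda^G$ as an equivariant index on the flag variety $\Fcal$ and then to decompose the resulting $H$-equivariant index according to the Matsuki-Rossmann-Richardson-Springer stratification $\Fcal=\bigsqcup_{Hx\in H\backslash Z_\theta} H_\Cbb\cdot x$. By the Borel-Weil theorem, $V_\lambda^G$ coincides with the equivariant index of the Dolbeault-Dirac operator twisted by the homogeneous line bundle $L_\lambda=G\times_T\Cbb_\lambda$, so the task becomes to identify the contribution of each stratum to this $H$-equivariant index with the claimed $Q_{Hx}(\lambda)$.

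To isolate the contribution of a single stratum I would follow the transversally elliptic strategy familiar from equivariant index theory in the non-compact setting: construct an $H$-invariant, $\theta$-compatible vector field on $\Fcal$ whose zero set is precisely $Z_\theta$ (for instance the gradient of a Matsuki-type function measuring the $\theta$-obstruction of $G_x$), and then deform the Dolbeault symbol along this vector field into a transversally elliptic symbol with the same $H$-equivariant index. An Atiyah-type localization then expresses this index as a possibly infinite sum of local indices attached to the connected components of the critical set, which are precisely the $H$-orbits $Hx\subset Z_\theta$.

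The local computation at $x\in Z_\theta$ is a weight bookkeeping on the $\theta$-stable decomposition
\[
T_x\Fcal\;\cong\;\bigoplus_{\alpha\in\Rgot_x^+}(\ggot\otimes\Cbb)_\alpha
\;=\;\Bigl(\bigoplus_{\theta(\alpha)\neq\alpha}(\ggot\otimes\Cbb)_\alpha\Bigr)\oplus\mathbb{E}_x^{\mathrm{ci}}\oplus\mathbb{E}_x^{\mathrm{nci}}.
\]
The $\theta$-paired complex roots produce the character $\Cbb_{\delta(x)}$ together with half of the sign in $(-1)^{m_x}$; the compact imaginary part is a normal Koszul factor giving $\bigwedge\mathbb{E}_x^{\mathrm{ci}}$; and the non-compact imaginary part, after the standard flip of complex structure along the ``non-compact'' directions, contributes the admissible factor $\det(\mathbb{E}_x^{\mathrm{nci}})\otimes\mathrm{Sym}(\mathbb{E}_x^{\mathrm{nci}})$ together with the remaining sign $(-1)^{\dim\mathbb{E}_x^{\mathrm{nci}}}$. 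Inducing from $H_x$ to $H$ assembles these pieces into the stated formula. I expect the main obstacle to be the global coherence: each $Q_{Hx}(\lambda)$ is an infinite sum in $\widehat{R}(H)$ because of $\mathrm{Sym}(\mathbb{E}_x^{\mathrm{nci}})$, so one must establish admissibility of $\mathrm{Sym}(\mathbb{E}_x^{\mathrm{nci}})$ as an $H_x$-module, check that $\mathrm{Ind}_{H_x}^H$ produces a well-defined element of $\widehat{R}(H)$, and, most delicately, verify that the sum over the finite set $H\backslash Z_\theta$ of these infinite contributions collapses to the finite-dimensional virtual character $V_\lambda^G|_H$.
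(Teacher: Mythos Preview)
Your global strategy matches the paper's: realize $V_\lambda^G$ as the $G$-equivariant Riemann--Roch character on $\Fcal$ via Borel--Weil, deform the Dolbeault symbol by an $H$-invariant vector field vanishing on $Z_\theta$ to obtain an $H$-transversally elliptic symbol, and localize. The paper's choice of vector field is precisely the Kirwan vector field $\kappa_r(x)=-\Phi_r^H(x)\cdot x$ associated to the moment map $\Phi_r^H:\Fcal\to\hgot^*$ for a regular $r$ in the Weyl chamber; the function whose gradient this is (up to normalization) is $\|\Phi_r^H\|^2$, and Lemma~2.1 identifies its critical set with $Z_\theta$. So your ``Matsuki-type function'' is made concrete this way, and the decomposition $V_\lambda^G|_H=\sum_{Hx}\RR_H(\Fcal,\Lcal_\lambda,\Phi_r^H,Hx)$ then follows from the general localization theorem for transversally elliptic symbols. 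Your worry about the infinite sums collapsing is therefore misplaced: this identity is built into the transversally elliptic machinery and is not something one verifies by hand after the fact.

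Where your sketch is too thin is the local computation. The paper does \emph{not} get the final formula by direct ``weight bookkeeping'' on $T_x\Fcal$. Instead it builds, via the Marle--Guillemin--Sternberg normal form, a symplectic model $H\times_{H_{\mu^+}}\!\bigl(H_{\mu^+}\times_{H_x}((\hgot_{\mu^+}/\hgot_x)^*\times V_x)\bigr)$ near $Hx$, where $\mu^+=(g\cdot r)^+$, and then computes the localized index in three steps (holomorphic induction $H_{\mu^+}\to H$, cotangent induction $H_x\to H_{\mu^+}$, and a linear Atiyah--Bott computation on $V_x$). The intermediate answer is
\[
(-1)^{m_x}\,\mathrm{Ind}_{H_x}^H\Bigl(\Cbb_{\lambda_x+\delta(x)}\otimes\det(\mathbb{E}_x^{\mathrm{nci}})\otimes\mathrm{Sym}(\qgot/\qgot_{\mu^+})\otimes\textstyle\bigwedge\hgot/\hgot_{\mu^+}\Bigr),
\]
and the passage to the claimed formula goes through a cancellation: both $\hgot/\hgot_{\mu^+}$ and $\qgot/\qgot_{\mu^+}$ contain, as $H_x$-modules, a common summand $\mathbb{V}_x$ built from the $\theta$-paired (non-imaginary) positive roots, and the identity $\mathrm{Sym}(\mathbb{V}_x)\otimes\bigwedge\mathbb{V}_x=1$ reduces $\mathrm{Sym}(\qgot/\qgot_{\mu^+})\otimes\bigwedge\hgot/\hgot_{\mu^+}$ to $\mathrm{Sym}(\mathbb{E}_x^{\mathrm{nci}})\otimes\bigwedge\mathbb{E}_x^{\mathrm{ci}}$. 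The sign $(-1)^{m_x}$ and the character $\Cbb_{\delta(x)}$ arise not from the paired roots directly but from comparing two complex structures on $V_x$: the one compatible with the symplectic form versus $-J_{\mu^+}$. So the mechanism you describe (paired roots $\to\delta(x)$, ci $\to$ Koszul, nci $\to$ flip) is morally right but runs through an intermediate $\mu^+$-dependent form that only afterwards is shown to be independent of the auxiliary choice of $r$.
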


We give now another formulation for decomposition (\ref{eq:theoreme-principal}) using the (right) action of the Weyl group $W=N(T)/T$ on the flag variety $\Fcal$. If 
$x=gT\in \Fcal$ and $w\in W$ we take $xw:=gwT$. We notice that $Z_\theta$ is stable under the action of $W$ and that the quotient $Z_\theta/W$ parametrizes the set 
of maximal tori of $G$ stable under $\theta$.

We associate to an element $x=gT\in Z_\theta$ the subgroup $W^H_x\subset W$ defined by the relation $w\in W^H_x\Longleftrightarrow Hxw=Hx$. 
We denote by $H\backslash Z_{\theta}\slash W$ the quotient of $Z_\theta$ by the action of $H\times W$, and by $\bar{x}\in H\backslash Z_{\theta}\slash W$ the 
image of $x\in Z_\theta$ through the quotient map. We associate to $\bar{x}\in H\backslash Z_{\theta}\slash W$ the element $Q_{\bar{x}}(\lambda)\in \widehat{R}(H)$ defined as follows
$$
Q_{\bar{x}}(\lambda)=\sum_{\bar{w}\in W^H_x\backslash W} Q_{Hxw}(\lambda).
$$
The previous theorem says then that $V_{\lambda}^{G}\vert_{H}=\sum_{\bar{x}\,\in\, H\backslash Z_{\theta}\slash W}Q_{\bar{x}}(\lambda)$. Here is a new formulation of Theorem \ref{theo-principal}.

\begin{theo}\label{theo-principal-bis}
We have  $V_{\lambda}^{G}\vert_{H}=\sum_{\bar{x}\,\in\, H\backslash Z_{\theta}\slash W}Q_{\bar{x}}(\lambda)$
where $Q_{\bar{x}}(\lambda)\in \wR(H)$ has the following description
$$
Q_{\bar{x}}(\lambda)=
{\rm Ind}^{H}_{H_{x}}\left(\mathbb{M}_x(\lambda)\otimes \Cbb_{\delta(x)}\otimes\bigwedge \mathbb{E}^{\hbox{\scriptsize\rm  ci}}_{x}\right),
$$
for some\footnote{The precise expression of  $\mathbb{M}_x(\lambda)$ is given in Proposition \ref{prop:formule-M-lambda}.} $\mathbb{M}_x(\lambda)\in \wR(H_x)$.
\end{theo}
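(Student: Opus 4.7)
The plan is to derive Theorem~\ref{theo-principal-bis} from Theorem~\ref{theo-principal} by regrouping the sum (\ref{eq:theoreme-principal}) according to $W$-orbits and exhibiting a common factor $\Cbb_{\delta(x)}\otimes\bigwedge\mathbb{E}_x^{\hbox{\scriptsize ci}}$ shared by all summands belonging to a single $W$-orbit. Since by definition $Q_{\bar x}(\lambda)=\sum_{\bar w\in W_x^H\backslash W}Q_{Hxw}(\lambda)$, it suffices to prove that for every $w$ the term $Q_{Hxw}(\lambda)$ admits a representation
$$Q_{Hxw}(\lambda)={\rm Ind}_{H_x}^{H}\Bigl(\mathbb{N}_w(\lambda)\otimes\Cbb_{\delta(x)}\otimes\textstyle\bigwedge\mathbb{E}_x^{\hbox{\scriptsize ci}}\Bigr)$$
for an explicit $\mathbb{N}_w(\lambda)\in\wR(H_x)$. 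Then $\mathbb{M}_x(\lambda):=\sum_{\bar w\in W_x^H\backslash W}\mathbb{N}_w(\lambda)$ is the element named in the statement.

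The starting observation is that if $x=gT$ and $xw=gwT$ then $G_{xw}=(gw)T(gw)^{-1}=gTg^{-1}=G_x$, hence $H_{xw}=H_x$ and the induction functor ${\rm Ind}_{H_{xw}}^{H}$ is independent of the choice of representative in the $W$-orbit. The root system $\Rgot_{xw}=\Rgot_x$ is also set-theoretically unchanged, as is the action of $\theta$ on it; what does change is the positive chamber, which becomes $\Rgot_{xw}^+=g(w\Rgot^+)$. In particular $\Rgot_x^{\hbox{\scriptsize ci}}$ and $\Rgot_x^{\hbox{\scriptsize nci}}$ are $w$-independent as sets, but their intersections with the positive system — and hence the modules $\mathbb{E}_{xw}^{\hbox{\scriptsize ci}}$, $\mathbb{E}_{xw}^{\hbox{\scriptsize nci}}$ — depend on $w$.

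The main step is $K$-theoretic bookkeeping. For $\bullet\in\{\hbox{\scriptsize ci},\hbox{\scriptsize nci}\}$, set $\Delta_w^{\bullet}:=\Rgot_x^{\bullet}\cap\Rgot_x^+\cap(-\Rgot_{xw}^+)$ and $\sigma_w^{\bullet}:=\sum_{\alpha\in\Delta_w^{\bullet}}\alpha$. Using the formal identities $\bigwedge V^{*}=(-1)^{\dim V}\det(V^{*})\otimes\bigwedge V$ and $\det(V)\otimes{\rm Sym}(V)=(-1)^{\dim V}{\rm Sym}(V^{*})$ in the representation ring, one obtains
$$\textstyle\bigwedge\mathbb{E}_{xw}^{\hbox{\scriptsize ci}}=(-1)^{|\Delta_w^{\hbox{\scriptsize ci}}|}\,\Cbb_{-\sigma_w^{\hbox{\scriptsize ci}}}\otimes\bigwedge\mathbb{E}_x^{\hbox{\scriptsize ci}},$$
$$\det(\mathbb{E}_{xw}^{\hbox{\scriptsize nci}})\otimes{\rm Sym}(\mathbb{E}_{xw}^{\hbox{\scriptsize nci}})=(-1)^{|\Delta_w^{\hbox{\scriptsize nci}}|}\,\Cbb_{-\sigma_w^{\hbox{\scriptsize nci}}}\otimes\det(\mathbb{E}_x^{\hbox{\scriptsize nci}})\otimes{\rm Sym}(\mathbb{E}_x^{\hbox{\scriptsize nci}}).$$
A parallel analysis expresses $\delta(xw)$ as $\delta(x)$ plus a correction character determined by the complex (non-imaginary) roots that switch sign under $w$, and likewise relates $m_{xw}$ to $m_x$. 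Combining these with $\Cbb_{\lambda_{xw}}=\Cbb_{w\lambda_x}$ produces the required formula for $\mathbb{N}_w(\lambda)$.

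The main obstacle is the parallel accounting of signs and characters from the three classes of roots — compact imaginary, non-compact imaginary, and complex — because the complex roots contribute to $\delta$ and to $m_x$ but not to the modules $\mathbb{E}^{\hbox{\scriptsize ci}}$, $\mathbb{E}^{\hbox{\scriptsize nci}}$, and because the symmetric-algebra factor behaves differently from the exterior-algebra factor under the $K$-theoretic duality $V\leftrightarrow V^{*}$. Once this bookkeeping is carried out, summing $\mathbb{N}_w(\lambda)$ over $\bar w\in W_x^H\backslash W$ yields the explicit expression for $\mathbb{M}_x(\lambda)$ promised in Proposition~\ref{prop:formule-M-lambda}, and Theorem~\ref{theo-principal-bis} follows.
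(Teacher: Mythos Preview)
Your overall strategy is exactly the paper's: regroup the sum in Theorem \ref{theo-principal} by $W$-orbits and, for each $w$, factor the common piece $\Cbb_{\delta(x)}\otimes\bigwedge\mathbb{E}_x^{\hbox{\scriptsize ci}}$ out of $Q_{Hxw}(\lambda)$ (this is the content of Section \ref{sec:formule-M-x}). Your observation that $G_{xw}=G_x$, $H_{xw}=H_x$, and that only the positive system changes is correct, and your first displayed identity for $\bigwedge\mathbb{E}_{xw}^{\hbox{\scriptsize ci}}$ matches Lemma \ref{lem:formulation-M-lambda} exactly.

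The second displayed identity, however, is false. The ``identity'' $\det(V)\otimes{\rm Sym}(V)=(-1)^{\dim V}{\rm Sym}(V^*)$ does \emph{not} hold in $\widehat{R}(H_x)$: already for $V=\Cbb_\alpha$ one has $\det(V)\otimes{\rm Sym}(V)=\sum_{k\geq 1}\Cbb_{k\alpha}$ while $(-1)\,{\rm Sym}(V^*)=-\sum_{k\geq 0}\Cbb_{-k\alpha}$. The formal manipulation ${\rm Sym}(V)=(\bigwedge V)^{-1}$ is illegitimate here because $\bigwedge V$ is a zero-divisor in $\widehat{R}(H_x)$ (e.g.\ $(1-\Cbb_\alpha)\cdot\sum_{k\in\Zbb}\Cbb_{k\alpha}=0$), so it admits several ``inverses''. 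Consequently $\det(\mathbb{E}_{xw}^{\hbox{\scriptsize nci}})\otimes{\rm Sym}(\mathbb{E}_{xw}^{\hbox{\scriptsize nci}})$ is \emph{not} a character twist of $\det(\mathbb{E}_x^{\hbox{\scriptsize nci}})\otimes{\rm Sym}(\mathbb{E}_x^{\hbox{\scriptsize nci}})$; the symmetric-algebra factor genuinely depends on $w$.

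The paper does not attempt this step. In Proposition \ref{prop:formule-M-lambda} the term ${\rm Sym}(|\mathbb{E}_x^{\hbox{\scriptsize nci}}|_w)$, where $|\mathbb{E}_x^{\hbox{\scriptsize nci}}|_w=(\mathbb{E}_x^{\hbox{\scriptsize nci}})^+_w\oplus\overline{(\mathbb{E}_x^{\hbox{\scriptsize nci}})^-_w}$, remains inside $\mathbb{M}_x(\lambda)$ and varies with $w$. For Theorem \ref{theo-principal-bis} as literally stated (existence of some $\mathbb{M}_x(\lambda)$) your error is not fatal: once $\Cbb_{\delta(x)}\otimes\bigwedge\mathbb{E}_x^{\hbox{\scriptsize ci}}$ is extracted via your first formula and the relation for $\delta(xw)-\delta(x)$, whatever remains \emph{is} your $\mathbb{N}_w(\lambda)$, and the theorem follows. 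But if you carry the second formula through, you will not obtain the expression in Proposition \ref{prop:formule-M-lambda}; you will get a $w$-independent ${\rm Sym}(\mathbb{E}_x^{\hbox{\scriptsize nci}})$, which is wrong.
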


\bigskip

We finish this section by giving two basic examples associated to the group $SU(2)$. Here the flag variety of $SU(2)$ is the $2$-dimensional sphere $\mathbb{S}^2$. 
For $n\geq 0$, we denote by $V_n$ the irreducible representation of $SU(2)$ of dimension $n+1$. 


\medskip

{\bf Example 1}. $G=SU(2)$ and the involution $\theta$ is the conjugation by the matrix $\left(\begin{array}{cc} 1& 0 \\ 0 & -1\end{array}\right)$. The subgroup fixed by $\theta$ is the torus $T\simeq U(1)$ and the 
critical set $Z_\theta\subset\mathbb{S}^2$ is composed of the poles $S,N$ and the equator $E$, so that $T\backslash Z_\theta$ has three terms. We take $\lambda=n$ in $\widehat{SU(2)}\simeq \mathbb{N}$.

For $Hx=E$, we have  $\mathbb{E}^{\hbox{\scriptsize\rm nci}}_{x}=\mathbb{E}^{\hbox{\scriptsize\rm ci}}_{x}=\{0\}$, $H_x\simeq \mathbb{Z}_2$, and 
$\Cbb_{\lambda_x+\delta(x)}=\Cbb_n\vert_{\mathbb{Z}_2}$.  The contribution of $E$ is then 
${\rm Ind}^{U(1)}_{\mathbb{Z}_2}(\Cbb_n\vert_{\mathbb{Z}_2})=\Cbb_{n}\otimes\sum_{k\in\mathbb{Z}} \Cbb_{2k}$. 

For $Hx=N$, we have $H_x=T$, $\mathbb{E}^{\hbox{\scriptsize\rm nci}}_{x}=\Cbb_2$, $\mathbb{E}^{\hbox{\scriptsize\rm ci}}_{x}=\{0\}$, and 
$\Cbb_{\lambda_x+\delta(x)}=\Cbb_{n}$. The contribution of $N$ is then $-\Cbb_{n+2}\otimes{\rm Sym}(\Cbb_{2})$. 

For $Hx=S$, we have $H_x=T$, $\mathbb{E}^{\hbox{\scriptsize\rm nci}}_{x}=\Cbb_{-2}$, $\mathbb{E}^{\hbox{\scriptsize\rm ci}}_{x}=\{0\}$, and 
$\Cbb_{\lambda_x+\delta(x)}=\Cbb_{-n}$. The contribution of $S$ is then $-\Cbb_{-n-2}\otimes{\rm Sym}(\Cbb_{-2})$. 

Finally, Relations (\ref{eq:theoreme-principal}) become 
\begin{eqnarray*}
V_n\vert_T&=& \Cbb_{n}\otimes\sum_{k\in\mathbb{Z}} \Cbb_{2k}  - \Cbb_{-n-2}\otimes{\rm Sym}(\Cbb_{-2}) - \Cbb_{n+2}\otimes{\rm Sym}(\Cbb_{2})\\
&=&\sum_{k=-n}^{0} \Cbb_{2k+n}.
\end{eqnarray*}

\medskip

{\bf Example 2}. $G=SU(2)\times SU(2)$ and the involution $\theta$ is the map $(a,b)\mapsto (b,a)$. The subgroup fixed by $\theta$ is $SU(2)$ embedded diagonally and the critical set $Z_\theta\subset\mathbb{S}^2\times \mathbb{S}^2$ is equal to 
the union of the orbits $SU(2)\cdot(N,N)$ and $SU(2)\cdot(S,N)$. Let $\lambda=(n,m)\in \widehat{G}$.

For $x=(N,N)$ or $x=(S,N)$ we have $\mathbb{E}^{\hbox{\scriptsize\rm nci}}_{x}=\mathbb{E}^{\hbox{\scriptsize\rm ci}}_{x}=\{0\}$ and $H_x\simeq T$. For 
$x=(N,N)$ we have $\lambda_x+\delta(x)=m+n+2$, and for $x=(S,N)$ we have $\lambda_x+\delta(x)=m-n$. Relations (\ref{eq:theoreme-principal}) give then 
$$
V_n\otimes V_m=   \ind_{T}^{SU(2)}(\Cbb_{m-n}) - \ind_{T}^{SU(2)}(\Cbb_{m+n+2}).
$$
It is not difficult to see that the previous identities correspond to the classical Clebsch-Gordan relations (see Example \ref{SU(2)-Clebsch-Gordan}).

\medskip

Here is a brief overview of the article. Sections 2 and 3 are devoted to the proof of our main result. In Section 4, we detail the case of $U(p)\times U(q)\subset U(n)$: in particular, we explain the branching formula we obtain for the restriction of $U(n)$ to $U(n-1)$. In the last section, we recall Kostant's branching formula and explain the formula it gives in the case of the restriction of $U(n) $ to $U(n-1)$, in order to compare it with our own formula.

\begin{center}
\bf Notations
\end{center}

Throughout the paper :
\begin{itemize}
\item $G$ denotes a compact connected Lie group with Lie algebra $\ggot$.
\item $T$ is a maximal torus in $G$ with Lie algebra $\tgot$.
\item $\Lambda\subset \tgot^*$ is the weight lattice  of $T$ : every $\mu\in \Lambda$ defines a $1$-dimensional
$T$-representation, denoted by $\Cbb_\mu$, where $t=\exp(X)$ acts by $t^\mu:= e^{i\langle\mu, X\rangle}$.
\item The coadjoint action of $g\in G$ on $\xi\in \ggot^*$ is denoted by $g\cdot\xi$.
\item When a Lie group $K$ acts on set $X$, the stabilizer subgroup of $x\in X$ is denoted by $K_x:=\{k\in K\ \vert\ k\cdot x=x\}$ and the Lie algebra of $K_x$ is denoted by $\kgot_x$.
\item  When a Lie group $K$ acts on a manifold $M$, we denote by $X\cdot m:=\frac{d}{dt} e^{tX}\cdot m\vert_{t=0}$, $m\in M$, the infinitesimal action of $X\in \kgot$ 
on $M$. 
\end{itemize}

\medskip 

{\bf Acknowledgments.} We would like to thank the referees for their invaluable advice, which enabled me to improve this text.

 \section{Non abelian localization}

Our main result is obtained by means of a non-abelian localization of the Riemann-Roch character on the flag variety $\Fcal$ of $G$. 
For that purpose we will use the family $(\Omega_r)_r$ of symplectic structure parametrized by the interior of the Weyl chamber $\tgot^*_+$. 
The symplectic structure $\Omega_r$ comes from the identication $gT\to g\cdot r$ of $\Fcal$ with the coadjoint orbit $Gr$. 
The moment map $\Phi_r:\Fcal\to\ggot^*$ associated to the action of $G$ on $(\Fcal,\Omega_r)$ is the map 
$gT \mapsto g\cdot r$.

At the level of Lie algebras we have $\ggot=\hgot\oplus\qgot$ where 
$\hgot=\ggot^\theta$ and $\qgot=\ggot^{-\theta}$. For any $\xi\in \ggot=\hgot\oplus\qgot$, 
we denote by $\xi^+$ his $\hgot$-part and by $\xi^-$ his $\qgot$-part. We use a $G$-invariant scalar product $(-,-)$ on $\ggot$ such that 
the involution $\theta$ is an orthogonal map. It induces identifications $\ggot^*\simeq\ggot$,  $\hgot^*\simeq\hgot$ and $\qgot^*\simeq\qgot$. 

The moment map $\Phi^H_r:\Fcal\to\hgot^*$ associated to the action of $H$ on $(\Fcal,\Omega_r)$ is the map 
$gT \mapsto (g\cdot r)^+$.

\subsection{Matsuki duality}
Consider the complex reductive groups $G_\Cbb$ and $H_\Cbb$ associated to the compact Lie groups $G$ and $H$.  
Let $L\subset G_\Cbb$ be the real form such that $H\subset L$ is a maximal compact subgroup of $L$. 

Matsuki duality is the statement that a one-to-one correspondence exists between the $H_\Cbb$-orbits and the $L$-orbits in $\Fcal$; two orbits are in duality 
when their intersection is a single orbit of $H$.

Uzawa, and Mirkovic-Uzawa-Vilonen  \cite{Uzawa90,Mi-Vi-Uz} proved the Matsuki correspondence by showing that both $H_\Cbb$-orbits and $L$-orbits in 
$\Fcal$ are parametrized by the $H$-orbits in the set of critical points of the function $\|\Phi^H_r\|^2: \Fcal\to \Rbb$.

First we recall the elementary but fundamental fact that the subset $Z_\theta$ is equal to the set of critical points of the function $\|\Phi^H_r\|^2$ 
\cite{Mi-Vi-Uz,Bremigan-Lorch}.

\begin{lem}\label{lem-Uzawa}
Let $x=gT\in \Fcal$ and $r\in {\rm Interior}(\tgot^*_+)$. The following statements are equivalent:
\begin{itemize}
\item[i)] the subalgebra $\ggot_x$ is invariant under $\theta$ (i.e. $x\in Z_\theta$),
\item[ii)] $g^{-1}\theta(g)\in N(T)$,
\item[iii)] $x$ is a critical point of the function $\|\Phi^H_r\|^2$,
\item[iv)] $(g\cdot r)^+$ and $(g\cdot r)^-$ commute.
\end{itemize}
\end{lem}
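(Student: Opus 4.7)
The plan is to introduce $\xi:=g\cdot r\in\ggot^*\simeq\ggot$ (via the invariant scalar product) and to reformulate each of the four conditions in terms of $\xi$, $\theta(\xi)$, and the centralizer $\ggot_x$. The single crucial fact that powers every implication is that $r$ lies in the interior of the Weyl chamber, so $\xi$ is regular: its centralizer is the maximal torus $\ggot_x=\mathrm{Ad}(g)\tgot$, which is in particular abelian, and $\xi\in\ggot_x$ automatically. I will then run three short arguments: (i)$\Leftrightarrow$(ii), (i)$\Leftrightarrow$(iv), and (iii)$\Leftrightarrow$(iv).

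For (i)$\Leftrightarrow$(ii) I would simply compute $\theta(\ggot_x)=\mathrm{Ad}(\theta(g))\,\theta(\tgot)=\mathrm{Ad}(\theta(g))\tgot$, using that $T$ is $\theta$-stable; hence $\theta(\ggot_x)=\ggot_x$ is equivalent to $\mathrm{Ad}(g^{-1}\theta(g))\tgot=\tgot$, i.e.\ $g^{-1}\theta(g)\in N(T)$.

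The bridge to the remaining conditions is the elementary identity
\[
[\xi,\theta(\xi)] = [\xi^++\xi^-,\,\xi^+-\xi^-] = -2\,[\xi^+,\xi^-],
\]
which shows that (iv) is equivalent to $\theta(\xi)\in\ggot_\xi=\ggot_x$. Granting this, (i)$\Rightarrow$(iv) is immediate from $\xi\in\ggot_x$ and the $\theta$-stability of $\ggot_x$. For the converse, if $\theta(\xi)\in\ggot_x$ then the abelian $\ggot_x$ lies inside $\ggot_{\theta(\xi)}=\theta(\ggot_x)$; since $\theta$ is an orthogonal Lie algebra automorphism, $\theta(\xi)$ is regular too, so $\ggot_{\theta(\xi)}$ is a maximal torus of the same dimension as $\ggot_x$, forcing $\theta(\ggot_x)=\ggot_x$.

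Finally, for (iii)$\Leftrightarrow$(iv) I would invoke the standard fact that the critical points of $\tfrac12\|\Phi^H_r\|^2$ are exactly the points at which the vector field on $\Fcal$ generated by $\Phi^H_r(x)\in\hgot$ vanishes. Since $\Phi^H_r(x)=\xi^+$, criticality amounts to $\xi^+\in\ggot_x$, and combined with $\xi\in\ggot_x$ this is equivalent to $\xi^-\in\ggot_x$; both elements lying in the abelian $\ggot_x$ then gives $[\xi^+,\xi^-]=0$, while the converse follows from $[\xi^+,\xi]=[\xi^+,\xi^-]=0$ placing $\xi^+$ in $\ggot_\xi=\ggot_x$. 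The only genuine (mild) obstacle I anticipate is articulating how regularity of $r$ and of $\theta(\xi)$ respectively make $\ggot_x$ and $\theta(\ggot_x)$ maximal tori of equal dimension; once that is in hand, the proof reduces to linear algebra inside a single abelian subalgebra.
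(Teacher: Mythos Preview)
Your proposal is correct and follows essentially the same approach as the paper. Both arguments hinge on the regularity of $r$ (so that $\ggot_x=\mathrm{Ad}(g)\tgot$ is the centralizer of $\xi=g\cdot r$), the identity $[\xi,\theta(\xi)]=-2[\xi^+,\xi^-]$, and the characterization of critical points of $\|\Phi^H_r\|^2$ via vanishing of the Kirwan vector field; the only cosmetic difference is that the paper runs the chain (ii)$\Leftrightarrow[n_g\cdot\theta(r),r]=0\Leftrightarrow[\theta(\xi),\xi]=0\Leftrightarrow$(iv) in one line and computes the derivative of $t\mapsto\|\Phi^H_r(e^{tX}x)\|^2$ explicitly, whereas you separate (i)$\Leftrightarrow$(ii) from (i)$\Leftrightarrow$(iv) and invoke the Kirwan criterion as a known fact.
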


\begin{proof} Let $n_g= g^{-1}\theta(g)$ and let $r$ be a regular element of $\tgot^*\simeq \tgot$. Since $\ggot_x=Ad(g)\tgot$ we see that 
\begin{eqnarray*}
 \theta(\ggot_x)=\ggot_x  &\Longleftrightarrow &n_g\in N_G(T)\\
 &\Longleftrightarrow & [n_g\cdot\theta(r),r]=0\\
 &\Longleftrightarrow & [\theta(g\cdot r),g\cdot r]=0\\
 &\Longleftrightarrow & [(g\cdot r)^+,(g\cdot r)^-]=0.
\end{eqnarray*}

A small computation shows that for any $X\in\ggot$ the derivative of the function $t\mapsto \|\Phi^H_r(e^{tX}x)\|^2$ at $t=0$ 
is equal to $(X,[g\cdot r,\theta(g\cdot r)])$. Hence $x=gT$ is a critical point of the function $\|\Phi^H_r\|^2$ if and only if $[g\cdot r,\theta(g\cdot r)]=0$. 
Finally we have proved that the statements $i),ii),iii)$ and $iv)$ are equivalent. 
\end{proof}

Let us check the other easy fact.

\begin{lem}
The set $H\backslash Z_\theta$ is finite.
\end{lem}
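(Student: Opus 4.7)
The plan is to construct an $H$-invariant map from $Z_\theta$ to the finite Weyl group $W=N(T)/T$ and then bound its fibers by a direct group-theoretic argument.

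I define $\pi : Z_\theta \to W$ by $\pi(gT):=g^{-1}\theta(g)\cdot T$. Well-definedness uses that $T$ is normal in $N(T)$ and $\theta(T)=T$: a different representative $g' = gt$ gives $(g')^{-1}\theta(g') = t^{-1}\,g^{-1}\theta(g)\,\theta(t)$, which lies in the same $T$-coset of $N(T)$. It is $H$-invariant because $(hg)^{-1}\theta(hg)=g^{-1}\theta(g)$ for $h\in H\subset G^\theta$. Hence $\pi$ descends to $\bar\pi : H\backslash Z_\theta \to W$ with finite target.

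To control the fibers, I first reduce to the case $H=G^\theta$: since $G^\theta$ is a closed subgroup of the compact group $G$ it has finitely many components, and $(G^\theta)_0\subset H$ yields $[G^\theta:H]<\infty$; each $G^\theta$-orbit in $Z_\theta$ splits into at most $[G^\theta:H]$ orbits under $H$. Fix $w\in W$, a lift $\dot w\in N(T)$, and set $L_w:=\{g : g^{-1}\theta(g)\in \dot w T\}$; the fiber $\bar\pi^{-1}(w)$ equals $G^\theta\backslash L_w/T$. The map $\rho : L_w\to T$, $\rho(g):=\dot w^{-1} g^{-1}\theta(g)$, is left $G^\theta$-invariant and right $T$-equivariant via the homomorphism $\alpha_w(t):= w^{-1}(t^{-1})\theta(t)$. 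Two elements with $\rho(g)=\alpha_w(t)\rho(g')$ satisfy $g(g't)^{-1}\in G^\theta$, so $\rho$ induces an injection $\bar\rho:\bar\pi^{-1}(w)\hookrightarrow T/\operatorname{im}(\alpha_w)$.

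The image of $\rho$ is contained in the coset $F_w:=\{t\in T:\theta(t)w(t)=c_w\}$ of $\ker(\beta_w)$, where $\beta_w(t):=\theta(t)w(t)$ and $c_w:=\theta(\dot w)^{-1}\dot w^{-1}$, forced by $\theta(\phi(g))=\phi(g)^{-1}$ with $\phi(g):=g^{-1}\theta(g)$; in particular, $F_w$ is non-empty only when $w$ is a $\theta$-twisted involution, equivalently $(w\theta)^2=\operatorname{id}$ on $\tgot$. A direct calculation then shows that $\operatorname{Lie}(\operatorname{im}\alpha_w)$ and $\operatorname{Lie}(\ker\beta_w)$ both equal the $(-1)$-eigenspace of $w^{-1}\theta$ on $\tgot$; therefore $F_w/\operatorname{im}(\alpha_w)$ is a finite group, and each fiber of $\bar\pi$ is finite, so $|H\backslash Z_\theta|<\infty$. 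The main subtlety is this final Lie-algebra comparison, which crucially uses the twisted-involution structure of the $w\in W$ contributing non-empty fibers.
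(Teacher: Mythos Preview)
Your proof is correct but takes a genuinely different route from the paper. The paper gives a short local argument: near any $x=gT\in Z_\theta$, points of the form $e^Xe^YgT$ (with $X\in\hgot$, $Y\in\qgot$) lie in $Z_\theta$ iff $e^{-2g^{-1}Y}\in N(T)$, which for small $Y$ forces $g^{-1}Y\in\tgot$ and hence $e^Xe^Yx=e^Xx\in Hx$; thus each $H$-orbit is open in $Z_\theta$, and compactness of $H\backslash Z_\theta$ finishes. Your argument is global and group-theoretic: you push $Z_\theta$ to $W$ via $gT\mapsto g^{-1}\theta(g)T$, observe that only $\theta$-twisted involutions $w$ (those with $\theta(w)=w^{-1}$ in $W$) can occur, and then bound each fiber by embedding it into the image of a coset $F_w$ inside $T/\operatorname{im}\alpha_w$, where $\ker\beta_w$ and $\operatorname{im}\alpha_w$ share the same identity component. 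The paper's approach is quicker and self-contained; yours is closer in spirit to the Richardson--Springer parametrization of $H_\Cbb$-orbits by twisted involutions and actually yields structural information about how $H\backslash Z_\theta$ fibers over $W$. One small point of phrasing: $F_w$ is a coset rather than a subgroup, so ``$F_w/\operatorname{im}(\alpha_w)$ is a finite group'' should be read as ``the image of $F_w$ in $T/\operatorname{im}(\alpha_w)$ is a finite set (a coset of a finite subgroup)''; this does not affect the validity of the argument.
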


\begin{proof}
Let $x=gT\in Z_\theta$. A neighborhood of $x$ is defined by elements of the form $e^{X}e^{Y}x$ where $X\in\hgot$ and $Y\in\qgot$. Now we see that 
$e^{X}e^{Y}gT\in Z_\theta$ if and only if $e^{-2g^{-1}Y}\in N(T)$. If $Y$ is sufficiently small the former relation is equivalent to $g^{-1}Y\in \tgot$, 
and in this case $e^{X}e^{Y}x=e^{X}x$. We have proved that any element in $H\backslash Z_\theta$ is isolated. As $H\backslash Z_\theta$ is compact, we can conclude that 
$H\backslash Z_\theta$ is finite.  
\end{proof}

\subsection{Borel-Weil-Bott theorem}

We first recall the Borel-Weil-Bott theorem. The flag manifold $\Fcal$ is equipped with the $G$-invariant complex structure such that 
$$
\T_{eT}\Fcal\simeq \sum_{\alpha\in \Rgot^+}(\ggot\otimes\Cbb)_\alpha
$$
is an identity of $T$-modules. Let us consider the tangent bundle $\T\Fcal$ as a complex vector bundle on $\Fcal$ with the invariant  
Hermitian structure $h_\Fcal$ induced by the invariant scalar product on $\ggot$.

Any weight $\lambda\in \Lambda$ defines a line bundle $\Lcal_\lambda\simeq G\times_T\Cbb_\lambda$ on $\Fcal$.

\begin{defi}
 We associate to a weight $\lambda\in \Lambda$ 
 
 $\bullet$ the spin-c bundle on $\Fcal$ 
$$
\Scal_\lambda:= \bigwedge_{\Cbb}\T \Fcal\otimes \Lcal_\lambda,
$$

$\bullet$ the Riemann-Roch character $\RR_G(\Fcal,\Lcal_\lambda)\in R(G)$ which is the equivariant index of the 
Dirac operator $D_\lambda$ associated to the spin-c structure $\Scal_\lambda$.
\end{defi}

The Borel-Weil-Bott theorem asserts that $V_\lambda^G=\RR_G(\Fcal,\Lcal_\lambda)$ when $\lambda$ is dominant. 
Now we consider the restriction $V_\lambda^G\vert_H=\RR_H(\Fcal,\Lcal_\lambda)$. In the next section, we will explain how we can localize the $H$-equivariant Riemann-Roch character $\RR_H(\Fcal,\Lcal_\lambda)$ on the critical set of the function $\|\Phi^H_r\|^2$ \cite{pep-RR}.

\subsection{Localization of the Riemann-Roch character}\label{sec:localisation}
In this section, we recall how we perform the ``Witten  non-abelian localization'' of the Riemann-Roch character with the help of the moment map $\Phi^H_r:\Fcal\to\hgot^*$ attached to a  regular element $r$ of the Weyl chamber  \cite{pep-RR,Ma-Zhang14,pep-vergne:witten}. 
 
Let us denote by $X\mapsto [X]_{\ggot/\tgot}$ the projection $\ggot\to\ggot/\tgot$. The Kirwan vector field $\kappa_r$ on $\Fcal$ is defined as follows:
$$
\kappa_r(x)=-\Phi^H_r(x)\cdot x \in \T_x\Fcal.
$$
Through the identification $\ggot/\tgot\simeq\T_x\Fcal, X\mapsto \frac{d}{dt}\vert_{t=0} ge^{tX}T$, the vector $\kappa_r(x)\in\T_x\Fcal$ is equal to 
$[g^{-1}\theta(g)\cdot r]_{\ggot/\tgot}$. Hence the set $Z_\theta\subset \Fcal$ is exactly the set where $\kappa_r$ vanishes.

Let $D_0$ be the Dirac operator associated to the spin-c structure $\Scal_0= \bigwedge_{\Cbb}\T \Fcal$. The principal symbol of the elliptic 
operator $D_0$ is the bundle map \break $\sigma(\Fcal)\in \Gamma(\T^* \Fcal, \hom(\bigwedge^+_\Cbb\T\Fcal,\bigwedge^-_\Cbb \T\Fcal))$ 
defined by the Clifford action 
$$
\sigma(\Fcal)(x,\nu)=\clif_{x}(\tilde{\nu}): {\bigwedge}^+_\Cbb\T_x\Fcal\to {\bigwedge}^-_\Cbb\T_x\Fcal.
$$
Here $\nu\in \T^*_x \Ocal\simeq \tilde{\nu}\in \T_x \Ocal$ is the one to one map associated to the identification
$\ggot^*\simeq \ggot$ (see \cite{B-G-V}).

Now we deform the elliptic symbol $\sigma(\Fcal)$ by means of the vector field $\kappa_r$ \cite{pep-RR,pep-vergne:witten}.

\begin{defi}\label{def:pushed-sigma}
The symbol  $\sigma(\Fcal)$ shifted by the vector field $\kappa_r$ is the
symbol on $\Fcal$ defined by
$$
\sigma_r(\Fcal)(x,\nu)=\clif_{x}(\tilde{\nu}-\kappa_r(x))
$$
for any $(x,\nu)\in\T^* \Fcal$.
\end{defi}

Consider an $H$-invariant open subset $\Ucal\subset \Fcal$ such that $\Ucal\cap Z_\theta$ 
is compact in $\Fcal$. Then the restriction $\sigma_r(\Fcal)\vert_\Ucal$ is a $H$-transversally elliptic symbol on $\Ucal$, 
and so its equivariant index is a well defined element in $\wR(H)$ (see \cite{Atiyah74,pep-RR,pep-vergne:witten}).

Thus we can define the following localized equivariant indices.

\begin{defi}\label{def:indice-localise}
Let $Hx\subset Z_\theta$. We denote by
$$
\RR_H(\Fcal,\Lcal_\lambda,\Phi_r^H, Hx)\ \in\ \widehat{R}(H)
$$
the equivariant index of $\sigma_r(\Fcal)\otimes\Lcal_\lambda\vert_\Ucal$ where 
$\Ucal$ is an invariant neighbourhood of $Hx$ so that $\Ucal\cap Z_\theta=Hx$.
\end{defi}

We proved in \cite{pep-RR} that the following decomposition holds in $\wR(H)$:
$$
\RR_H(\Fcal,\Lcal_\lambda)=\sum_{Hx\in H\backslash Z_\theta}
\RR_H(\Fcal,\Lcal_\lambda, \Phi_r^H, Hx).
$$

The computation of the characters $\RR_H(\Fcal,\Lcal_\lambda, \Phi_r^H, Hx)$ will be handle in Section \ref{sec:computation-Q-Hx-lambda}. To undertake these calculations we 
need to describe geometrically  a neighborhood of $Hx$ in $\Fcal$. This is the goal of the next section.

\subsection{Local model near $Hx\subset Z_\theta$}

Let $x=gT\in Z_\theta$. We need to compute a symplectic model of a neighborhood of $Hx$ in $(\Fcal,\Omega_r)$. Here we use the identification
$\ggot\simeq \ggot^*$ given by the choice of an invariant scalar product. Let $\mu=g\cdot r$ that we write $\mu=\mu^+ +\mu^-$ where 
$\mu^+\in \hgot$ and $\mu^-\in \qgot$.

The tangent space $\T_x \Fcal$ is equipped with the symplectic two form 
$\Omega_r\vert_x$:
$$
\Omega_r\vert_x(X\cdot x,Y\cdot x)=(\mu,[X,Y]),\quad X,Y\in \ggot.
$$

We need to understand the structure of the symplectic vector space $(\T_x\Fcal,\Omega_r\vert_x)$. 
If $\agot\subset \ggot$ is a vector subspace we denote by $\agot\cdot x:=\{X\cdot x, X\in\agot\}$ the corresponding subspace of $\T_x\Fcal$. The symplectic orthogonal of $\agot\cdot x$ is denoted by $(\agot\cdot x)^{\perp,\Omega}$. 

If $\agot,\bgot$ are two subspaces, a small computation gives that
\begin{equation}\label{eq:orthogonal-a-x}
(\agot\cdot x)^{\perp,\Omega}\cap \bgot\cdot x \simeq \agot^\perp\cap [\bgot,\mu],
\end{equation}
where $\agot^\perp\subset \ggot$ is the orthogonal of $\agot$ relatively to the scalar product.

We denote by $\ggot_{\mu^+}=\hgot_{\mu^+}\oplus\qgot_{\mu^+}$ the subspaces fixed by $ad(\mu^+)$. Notice that 
$\ggot_{\mu}=\ggot_x$ is an abelian subalgebra containing $\mu^+$ since $[\mu^+,\mu^-]=0$. It follows that $\ggot_x\subset \ggot_{\mu^+}$. 

\begin{lem}$[\ggot,\mu^+]\cdot x$, $\ggot_{\mu^+}\cdot x$ and $[\hgot,\mu^+]\cdot x$ are symplectic subspaces of $\T_x\Fcal$.
\end{lem}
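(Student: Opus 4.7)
The plan is to apply formula (\ref{eq:orthogonal-a-x}) in the diagonal case $\agot=\bgot$: a subspace $V=\agot\cdot x$ of $(\T_x\Fcal,\Omega_r\vert_x)$ is symplectic if and only if $V\cap V^{\perp,\Omega}=\{0\}$, which by (\ref{eq:orthogonal-a-x}) amounts to $\agot^\perp\cap[\agot,\mu]=\{0\}$. First I would check this equality for $\agot=\ggot_{\mu^+}$, then for $\agot=[\hgot,\mu^+]$; in both cases the key geometric input is $[\mu^+,\mu^-]=0$, which holds by Lemma \ref{lem-Uzawa}.

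For $\agot=\ggot_{\mu^+}$: since $[\mu^+,\mu^-]=0$, one has $\mu^-\in\ggot_{\mu^+}$, and because $\mu^+$ lies in the center of its own centralizer, $[\ggot_{\mu^+},\mu]=[\ggot_{\mu^+},\mu^-]$. Now $\ggot_{\mu^+}$ is a Lie subalgebra that already contains $\mu^-$, so this bracket stays inside $\ggot_{\mu^+}$, and therefore its intersection with $\ggot_{\mu^+}^\perp$ is reduced to $\{0\}$.

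For $\agot=[\hgot,\mu^+]$ I would start from the orthogonal decomposition $\hgot=\hgot_{\mu^+}\oplus[\hgot,\mu^+]$, a consequence of $\ad(\mu^+)$ being a skew-symmetric endomorphism of $\hgot$ with kernel $\hgot_{\mu^+}$ and image $[\hgot,\mu^+]$. This yields $\agot^\perp=\hgot_{\mu^+}\oplus\qgot$. Now pick $v\in[\agot,\mu]\cap\agot^\perp$ and write $v=[A,\mu^+]+[A,\mu^-]$ with $A\in\agot$; the first summand sits in $\hgot$ and the second in $\qgot$, so the $\hgot$-part of $v$ is exactly $[A,\mu^+]$. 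Since $\ad(\mu^+)$ preserves $\agot=[\hgot,\mu^+]$ (by the spectral description above), $[A,\mu^+]\in\agot$; but as the $\hgot$-part of $v$ it must also lie in $\agot^\perp\cap\hgot=\hgot_{\mu^+}$, forcing $[A,\mu^+]\in\agot\cap\hgot_{\mu^+}=\{0\}$. Hence $A\in\hgot_{\mu^+}\cap\agot=\{0\}$, and therefore $v=0$.

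The only real subtlety is in the second case: one must resist treating the $\hgot$- and $\qgot$-parts of an element of $[\agot,\mu]$ as if they came from unrelated generators. In fact they are produced by the same $A\in\agot$, and it is the identity $[A,\mu^+]=0$ that forces $A$ to vanish, thereby killing the accompanying cross-term $[A,\mu^-]\in\qgot$ as well.
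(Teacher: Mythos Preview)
Your proof is correct and follows exactly the approach the paper has in mind: the paper's own proof is the single sentence ``It is a direct consequence of (\ref{eq:orthogonal-a-x}),'' and what you have done is precisely spell out that consequence by taking $\agot=\bgot$ and verifying $\agot^\perp\cap[\agot,\mu]=\{0\}$ in each case. One minor simplification in your second case: the line ``$\ad(\mu^+)$ preserves $\agot=[\hgot,\mu^+]$'' is not needed, since $[A,\mu^+]\in[\hgot,\mu^+]=\agot$ holds trivially just because $A\in\hgot$.
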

\begin{proof} 
It is a direct consequence of (\ref{eq:orthogonal-a-x}).  
\end{proof}

\medskip

We consider now the symplectic subspace $V_x \subset \T_x\Fcal$ defined by the relation
\begin{equation}\label{eq:V-x}
V_x= ([\hgot,\mu^+]\cdot x)^{\perp,\Omega}\cap [\ggot,\mu^+]\cdot x.
\end{equation}
A small computation shows that $X\cdot x\in V_x$ if and only if $[X,\mu]\in [\qgot,\mu^+]$.

We have the following important Lemma. 

\begin{lem}\label{lem:decomposition-T-x-O}
\begin{itemize}
\item We have the following decomposition
\begin{equation}\label{eq:T-x-F}
\T_x\Fcal    = \ggot_{\mu^+}\cdot x \stackrel{\perp}{\oplus} \left[\hgot,\mu^+\right]\cdot x\stackrel{\perp}{\oplus} V_x
\end{equation}
where $\perp$ stands for the orthogonal relative to $\Omega_r\vert_x$. 
\item $\ggot_{\mu^+}\cdot x$  is  symplectomorphic to $\hgot_{\mu^+}/\hgot_x\oplus (\hgot_{\mu^+}/\hgot_x)^*$.
\item $\left[\hgot,\mu^+\right]\cdot x$  is symplectomorphic to $\hgot/\hgot_{\mu^+}$ equipped 
with the symplectic structure $\Omega_{\mu^+}(\bar{u},\bar{v})=(\mu^+,[u,v])$.
\item $V_x$  is symplectomorphic to $(\hgot\cdot x)^{\perp,\Omega}\slash \left[(\hgot\cdot x)^{\perp,\Omega}\cap \hgot\cdot x\right]$.
\end{itemize}
\end{lem}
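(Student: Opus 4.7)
Throughout, the plan uses the formula $\Omega_r|_x(X\cdot x,Y\cdot x)=(\mu,[X,Y])$, the $\theta$-grading $[\hgot,\hgot],[\qgot,\qgot]\subset\hgot$ and $[\hgot,\qgot]\subset\qgot$, and the crucial commutation $[\mu^+,\mu^-]=0$ supplied by Lemma~\ref{lem-Uzawa}. Formula~(\ref{eq:orthogonal-a-x}) will repeatedly turn symplectic orthogonals into Lie-algebraic intersections. Two further facts underlie everything: $\ggot_x\subset\ggot_{\mu^+}$ (since $\ggot_x$ is abelian and contains $\mu^+$, the $\theta$-fixed part of $\mu$) and $[\hgot,\mu^+]\cap\ggot_x=0$ (because $[\hgot,\mu^+]\subset\hgot$ is already orthogonal to $\hgot_{\mu^+}\supset\hgot_x$).

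For part~(i), I would establish pairwise symplectic orthogonality and then match dimensions. The orthogonality $\ggot_{\mu^+}\cdot x\perp_\Omega[\hgot,\mu^+]\cdot x$ follows from a short Jacobi computation using $[Y,\mu^+]=0$ for $Y\in\ggot_{\mu^+}$ together with $[\mu^+,\mu^-]=0$. The relation $V_x\perp_\Omega[\hgot,\mu^+]\cdot x$ is by definition, and $V_x\perp_\Omega\ggot_{\mu^+}\cdot x$ follows from the characterization $X\cdot x\in V_x\Longleftrightarrow[X,\mu]\in[\qgot,\mu^+]$ combined with $\ggot_{\mu^+}\perp[\qgot,\mu^+]$ in $\ggot$. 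A dimension count then gives $\dim\ggot_{\mu^+}\cdot x+\dim[\hgot,\mu^+]\cdot x+\dim V_x=\dim\ggot-\dim\ggot_x=\dim\T_x\Fcal$, which forces the orthogonal sum to be direct and nondegenerate.

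For~(iii), I would use the orthogonal splitting $\hgot=\hgot_{\mu^+}\oplus[\hgot,\mu^+]$ to identify $\hgot/\hgot_{\mu^+}$ with $[\hgot,\mu^+]$, and check that $Y\mapsto Y\cdot x$ is an injection of $[\hgot,\mu^+]$ onto $[\hgot,\mu^+]\cdot x$; the pullback of $\Omega_r$ is $(\mu,[Y,Z])=(\mu^+,[Y,Z])$ (since $[Y,Z]\in\hgot$ is orthogonal to $\mu^-$), and this coincides with $\Omega_{\mu^+}(\bar u,\bar v)=(\mu^+,[u,v])$ on $\hgot/\hgot_{\mu^+}$ since any $\hgot_{\mu^+}$-component of $u$ or $v$ contributes zero to $(\mu^+,[u,v])$. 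For~(ii), decompose $\ggot_{\mu^+}\cdot x=\hgot_{\mu^+}\cdot x\oplus\qgot_{\mu^+}\cdot x$; both summands are isotropic by parity of the $\theta$-grading together with $[\mu^+,\hgot_{\mu^+}]=[\mu^+,\qgot_{\mu^+}]=0$, and the cross pairing simplifies to $\Omega_r(A\cdot x,B\cdot x)=([\mu^-,A],B)$ for $A\in\hgot_{\mu^+}$, $B\in\qgot_{\mu^+}$. The operator $\ad(\mu^-)$ preserves $\ggot_{\mu^+}$ (by Jacobi and $[\mu^+,\mu^-]=0$) with kernel $\ggot_\mu=\ggot_x$, hence induces an isomorphism $\hgot_{\mu^+}/\hgot_x\xrightarrow{\sim}\qgot_{\mu^+}/\qgot_x$; the cross pairing becomes perfect and yields the Lagrangian splitting symplectomorphic to $\hgot_{\mu^+}/\hgot_x\oplus(\hgot_{\mu^+}/\hgot_x)^*$.

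For~(iv), applying formula~(\ref{eq:orthogonal-a-x}) with $\agot=\hgot$, $\bgot=\ggot$ gives $(\hgot\cdot x)^{\perp,\Omega}\simeq\qgot\cap[\ggot,\mu]$ and $(\hgot\cdot x)^{\perp,\Omega}\cap\hgot\cdot x\simeq\qgot\cap[\hgot,\mu]=[\hgot_{\mu^+},\mu^-]$. Part~(i) then implies $V_x\subset(\hgot\cdot x)^{\perp,\Omega}$ (since $\hgot\cdot x\subset\ggot_{\mu^+}\cdot x+[\hgot,\mu^+]\cdot x$) and $V_x\cap\hgot\cdot x=0$, so the composition $V_x\hookrightarrow(\hgot\cdot x)^{\perp,\Omega}\twoheadrightarrow(\hgot\cdot x)^{\perp,\Omega}/((\hgot\cdot x)^{\perp,\Omega}\cap\hgot\cdot x)$ is an injective morphism of symplectic vector spaces. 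Comparing dimensions (both sides equal $\dim\qgot-\dim\qgot_{\mu^+}$, the kernel dimension being computed via the isomorphism from (ii)) then upgrades this to the desired symplectomorphism. The main obstacle will be the Lie-algebra bookkeeping in (iii) to match the pullback with the KKS form and, in (ii), the verification that $\ad(\mu^-)$ furnishes the required isomorphism between $\hgot_{\mu^+}/\hgot_x$ and $\qgot_{\mu^+}/\qgot_x$; both hinge on $[\mu^+,\mu^-]=0$, which is the geometric content of $x\in Z_\theta$ from Lemma~\ref{lem-Uzawa}.
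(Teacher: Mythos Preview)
Your proposal is correct and follows essentially the same strategy as the paper's proof: the same ingredients ($[\mu^+,\mu^-]=0$, the $\theta$-grading, formula~(\ref{eq:orthogonal-a-x}), and the Lagrangian splitting $\hgot_{\mu^+}\cdot x\oplus\qgot_{\mu^+}\cdot x$) are used in the same way for parts~(i)--(iii). The only stylistic difference is in part~(iv), where the paper exploits the decomposition~(\ref{eq:T-x-F}) together with $\hgot\cdot x=[\hgot,\mu^+]\cdot x+\hgot_{\mu^+}\cdot x$ to compute $(\hgot\cdot x)^{\perp,\Omega}=(\hgot\cdot x)^{\perp,\Omega}\cap\hgot\cdot x\oplus V_x$ directly, whereas you reach the same conclusion via formula~(\ref{eq:orthogonal-a-x}) and a dimension count.
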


\begin{proof}
If we use the decomposition $\ggot= \ggot_{\mu^+}\oplus[\ggot,\mu^+]$ and the fact that the abelian subalgebra $\ggot_x$ is contained in $\ggot_{\mu^+}$ we obtain 
$$
\T_x\Fcal=  \ggot_{\mu^+}\cdot x \oplus [\ggot,\mu^+]\cdot x.
$$
It is obvious to check that the subspaces $[\ggot,\mu^+]\cdot x$ and $\ggot_{\mu^+}\cdot x$ are orthogonal relatively to the symplectic form 
$\Omega_r\vert_x$. Since $[\hgot,\mu^+]\cdot x$ is a symplectic subspace we have $[\ggot,\mu^+]\cdot x=[\hgot,\mu^+]\cdot x \stackrel{\perp}{\oplus} V_x$ where 
$V_x$ is defined by (\ref{eq:V-x}). The first point is proved.

The identities $\ggot_x=\theta(\ggot_x)=\ggot_{\theta(x)}$ imply the decompositions $\ggot_x=\hgot_x\oplus\qgot_x$ and $\ggot_{\mu^+}\cdot x=\qgot_{\mu^+}\cdot x\oplus \hgot_{\mu^+}\cdot x$. The vector subspace $\hgot_{\mu^+}\cdot x$ is isomorphic to $\hgot_{\mu^+}/\hgot_x$, and the map $v\mapsto \Omega_r\vert_x(v,-)$ defines an isomorphism between $\qgot_{\mu^+}\cdot x$ and the dual of $\hgot_{\mu^+}\cdot x$. 
The second point is proved. 

For the third point we use the isomophism $j:[\hgot,\mu^+]\to \hgot/\hgot_{\mu^+}$ induced by the projection $\hgot \to \hgot/\hgot_{\mu^+}$. Then the map 
$\bar{u}\mapsto j(\bar{u})\cdot x$ defines a symplectomorphism between $(\hgot/\hgot_{\mu^+},\Omega_{\mu^+})$ and $[\hgot,\mu^+]\cdot x$.

Now we see that (\ref{eq:T-x-F}) together with the decomposition $\hgot\cdot x=$\break  $\hgot_{\mu^+}\cdot x \stackrel{\perp}{\oplus} [\hgot,\mu^+]\cdot x$   leads to 
\begin{eqnarray*}
(\hgot\cdot x)^{\perp,\Omega}&=& ([\hgot,\mu^+]\cdot x)^{\perp,\Omega}\cap(\hgot_{\mu^+}\cdot x)^{\perp,\Omega}\\
&=&\hgot_{\mu^+}\cdot x \oplus V_x\\
&=&\left[(\hgot\cdot x)^{\perp,\Omega}\cap \hgot\cdot x\right]\oplus V_x.
\end{eqnarray*}
The last point follows.  
\end{proof}

\medskip

We denote by $\Omega_{V_x}$ the restriction of $\Omega_r\vert_x$ on the symplectic vector subspace $V_x$. The action of $H_x$ on $(V_x,\Omega_{V_x})$ 
is Hamiltonian, with moment map $\Phi_{V_x}: V_x\to \hgot_x^*$ defined by the relation
$$
\langle\Phi_{V_x}(v), A\rangle= \frac{1}{2}\Omega_{V_x}(v,Av),\quad v\in V_x,\ A\in \hgot_x.
$$

Thanks to Lemma \ref{lem:decomposition-T-x-O}, we know that the $H_x$-symplectic vector space $(\T_x\Fcal,\Omega_r\vert_x)$ admits the following decomposition
$$
\T_x\Fcal \simeq \hgot_{\mu^+}/\hgot_x\oplus (\hgot_{\mu^+}/\hgot_x)^* 
\stackrel{\perp}{\oplus}\hgot/\hgot_{\mu^+}
\stackrel{\perp}{\oplus} V_x.
$$

Thanks to the normal form Theorem of Marle \cite{Marle85} and Guillemin-Sternberg \cite{Guillemin-Sternberg84}, we get the following result.

\begin{coro}\label{coro:model-symplectic}
An $H$-equivariant symplectic model of a neighborhoood of $Hx$ in $\Fcal$ is 
$\Fcal_x:= H\times_{H_{\mu^+}} Y_x$
where  
$$
Y_x= H_{\mu^+}\times_{H_x}\left((\hgot_{\mu^+}/\hgot_x)^*\times V_x\right).
$$
The corresponding moment map on $\Fcal_x$ is
$$
\Phi_{\Fcal_x}([h; \eta,v])= h(\eta + \mu^+ + \Phi_{V_x}(v))
$$
for $[h; \eta,v]\in H\times_{H_x}\left((\hgot_{\mu^+}/\hgot_x)^*\times V_x\right)$.
\end{coro}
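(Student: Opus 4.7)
My plan is to deduce the corollary directly from the Marle--Guillemin--Sternberg (MGS) equivariant symplectic normal form for a Hamiltonian action near an orbit, using Lemma \ref{lem:decomposition-T-x-O} to identify the abstract symplectic slice with the explicit vector space $V_x$.

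The setup is the Hamiltonian $H$-manifold $(\Fcal,\Omega_r,\Phi^H_r)$, with the distinguished point $x=gT\in Z_\theta$. By construction $\Phi^H_r(x)=\mu^+$ and the stabilizer $H_x$ of $x$ in $H$ sits inside $H_{\mu^+}$. The MGS theorem asserts that an $H$-invariant neighborhood of $Hx$ in $\Fcal$ is $H$-equivariantly symplectomorphic to a neighborhood of the zero section of
$$
H\times_{H_x}\left((\hgot_{\mu^+}/\hgot_x)^{*}\times W_x\right),
$$
where $W_x=(\hgot\cdot x)^{\perp,\Omega}/\,((\hgot\cdot x)^{\perp,\Omega}\cap \hgot\cdot x)$ is the symplectic slice at $x$, and that the moment map on this model is the standard one. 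Rewriting the induced bundle as a two-step induction (first from $H_x$ to $H_{\mu^+}$, then from $H_{\mu^+}$ to $H$) produces exactly the space $H\times_{H_{\mu^+}}Y_x$ with $Y_x=H_{\mu^+}\times_{H_x}((\hgot_{\mu^+}/\hgot_x)^*\times W_x)$ appearing in the statement.

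The only remaining identification is between the abstract slice $W_x$ and the concrete $V_x$ defined by (\ref{eq:V-x}); this is precisely the last assertion of Lemma \ref{lem:decomposition-T-x-O}, so nothing further is needed on this point. For the moment map, the MGS formula assigns to $[h;\eta,v]$ the coadjoint translate $h\cdot(\eta+\mu^++\Phi_{V_x}(v))$, where $\eta\in(\hgot_{\mu^+}/\hgot_x)^*$ is viewed in $\hgot^*$ via the splittings $\hgot^*=\hgot_{\mu^+}^*\oplus(\hgot_{\mu^+}/\hgot_x)^*\oplus(\text{rest})$ coming from the invariant scalar product, $\mu^+\in\hgot_{\mu^+}^*$ is the base value, and $\Phi_{V_x}(v)\in\hgot_x^*$ is the slice moment computed in the third item of Lemma \ref{lem:decomposition-T-x-O}. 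Matching these three summands with the three factors of $Y_x$ yields the stated formula.

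I expect no serious obstacle here: the real work has been done in Lemma \ref{lem:decomposition-T-x-O}, which both produces the symplectic decomposition of $\T_x\Fcal$ compatible with the orbit, the transverse $H_{\mu^+}$-orbit direction, and the slice, and identifies each summand intrinsically. What remains is essentially bookkeeping: verifying that the $H_x$- and $H_{\mu^+}$-equivariance of the three pieces $(\hgot_{\mu^+}/\hgot_x)^*$, $\hgot/\hgot_{\mu^+}$ and $V_x$ is compatible with the two-stage induction $H\times_{H_{\mu^+}}(H_{\mu^+}\times_{H_x}(\cdot))$, and that the symplectic form and moment map transported from the MGS model agree with $\Omega_r$ and $\Phi^H_r$ on a neighborhood of $Hx$, both of which are routine consequences of the MGS theorem once the slice has been identified.
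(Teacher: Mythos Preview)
Your proposal is correct and follows exactly the route the paper takes: the paper simply states that the corollary follows from the Marle--Guillemin--Sternberg normal form theorem together with the decomposition of $\T_x\Fcal$ established in Lemma~\ref{lem:decomposition-T-x-O}, and your write-up spells this out in more detail than the paper itself does. One small slip: the splitting you wrote, $\hgot^*=\hgot_{\mu^+}^*\oplus(\hgot_{\mu^+}/\hgot_x)^*\oplus(\text{rest})$, is garbled (the second summand sits inside the first); you mean something like $\hgot^*\simeq \hgot_x^*\oplus(\hgot_{\mu^+}/\hgot_x)^*\oplus(\hgot/\hgot_{\mu^+})^*$, and $\Phi_{V_x}$ is defined just after Lemma~\ref{lem:decomposition-T-x-O}, not in its third item.
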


\bigskip

We finish this section by computing a compatible complex structure on $V_x$. 

By definition, the map that sends $X\cdot x$ to $[X,\mu]$ defines an isomorphism $i:V_x\to [\qgot,\mu^+]$. 
The adjoint map $ad(\mu)$ defines also an automorphism of $[\ggot,\mu^+]$: for any $X\in[\ggot,\mu^+]$ we denote by 
$\tilde{X}\in [\ggot,\mu^+]$ the unique element such that $ad(\mu)\tilde{X}=X$.

The symplectic structure $\Omega_\mu:=(i^{-1})^*\Omega_{V_x}$ satisfies the relations
$$
\Omega_\mu(X,Y)=(\mu,[\tilde{X},\tilde{Y}])=(X,\tilde{Y})=-(\tilde{X},Y),\quad \forall X,Y\in [\qgot,\mu^+].
$$

We work with the following $H_x$-equivariant maps
\begin{itemize}
    \item the one to one map $T_\mu:=-ad(\mu)ad(\theta(\mu)):[\ggot,{\mu^+}]\to [\ggot,{\mu^+}]$,
    \item the complex structure $J_{\mu^+}= ad(\mu^+)(-ad(\mu^+)^2)^{-1/2}$ on $[\ggot,{\mu^+}]$.
\end{itemize}
The map $T_\mu$ restricts to a one to one map 
$T_x:[\qgot,{\mu^+}]\to [\qgot,{\mu^+}]$ and $J_{\mu^+}$ defines a complex structure on $[\qgot,{\mu^+}]$ (still denoted by $J_{\mu^+}$). 

Let $S_x:=(T_x^2)^{-1/2} T_x$. The map $J_{V_x}:= J_{\mu^+}\circ S_x$ defines a $H_x$-invariant complex structure on $[\qgot,{\mu^+}]$.

\medskip

\begin{lem}\label{lem:V-x-complexe}The $H_x$-symplectic space $(V_x,\Omega_{V_x})$ is isomorphic to $[\qgot,{\mu^+}]$ equipped with the symplectic form
$\Omega_{\mu}^1(v,w)= (J_{V_x}v,w)$.
\end{lem}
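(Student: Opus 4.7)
The plan is to realize $(V_x,\Omega_{V_x})$ inside $[\qgot,\mu^+]$ by refining the linear isomorphism $i$: rather than $i$ itself, it is the composite $\Phi\circ i$ (with $\Phi$ a suitable positive symmetric operator) that will serve as an $H_x$-equivariant symplectomorphism onto $([\qgot,\mu^+],\Omega_\mu^1)$. Throughout, the central algebraic input is $[\mu^+,\mu^-]=0$ (guaranteed by $x\in Z_\theta$, cf. Lemma \ref{lem-Uzawa}): this makes $A:=\ad(\mu)$ and $B:=\ad(\theta(\mu))$ commute on $[\ggot,\mu^+]$, and places $A$, $B$, $T_x$, $J_{\mu^+}$, $|T_x|$, $|\ad(\mu^+)|$, $S_x$ all inside one commutative subalgebra.

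First I would unfold the already-proved identity $\Omega_\mu(X,Y)=(X,\tilde Y)$. Commutativity of $A$ and $B$ gives $T_x=-AB=-BA$ and $A^{-1}=-BT_x^{-1}$, hence $\tilde Y=-BT_x^{-1}Y$. Since $T_x=-\ad(\mu^+)^2+\ad(\mu^-)^2$ preserves the decomposition $[\ggot,\mu^+]=[\hgot,\mu^+]\oplus[\qgot,\mu^+]$, the vector $T_x^{-1}Y$ stays in $[\qgot,\mu^+]$; expanding $B=\ad(\mu^+)-\ad(\mu^-)$, the component $\ad(\mu^-)T_x^{-1}Y$ lies in $[\hgot,\mu^+]$, hence is orthogonal to $X\in\qgot$ and drops out of the pairing. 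Skew-symmetry of $\ad(\mu^+)$ and its commutation with $T_x^{-1}$ then give
$$
\Omega_\mu(X,Y)=(\ad(\mu^+)T_x^{-1}X,Y).
$$

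Next I would factor $\ad(\mu^+)T_x^{-1}$ using the polar-type factorizations $\ad(\mu^+)=J_{\mu^+}|\ad(\mu^+)|$ and $T_x=S_x|T_x|$. Because all factors commute,
$$
\ad(\mu^+)T_x^{-1}=J_{\mu^+}S_x\cdot|\ad(\mu^+)||T_x|^{-1}=J_{V_x}\circ P,
$$
where $P:=|\ad(\mu^+)||T_x|^{-1}$ is $H_x$-equivariant, symmetric and positive, and commutes with $J_{V_x}$. Consequently $\Omega_\mu(X,Y)=(J_{V_x}PX,Y)=\Omega_\mu^1(PX,Y)$.

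Finally, setting $\Phi:=P^{1/2}$, which is again $H_x$-equivariant, symmetric positive, and commutes with $J_{V_x}$, a one-line check gives $\Omega_\mu^1(\Phi X,\Phi Y)=(J_{V_x}\Phi^2X,Y)=(J_{V_x}PX,Y)=\Omega_\mu(X,Y)$. Thus $(\Phi\circ i)^*\Omega_\mu^1=\Omega_{V_x}$, and $\Phi\circ i:V_x\to[\qgot,\mu^+]$ is the desired $H_x$-equivariant symplectomorphism. The main obstacle is the commutation bookkeeping in the second step: one must verify that the various square-root, sign, and polar factors all commute, which ultimately rests on $[\mu^+,\mu^-]=0$ together with the fact that $T_x$ respects the splitting $\hgot\oplus\qgot$.
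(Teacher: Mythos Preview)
Your proof is correct and follows essentially the same route as the paper: first identify $(V_x,\Omega_{V_x})\simeq([\qgot,\mu^+],\Omega_\mu)$ via $i$, then intertwine $\Omega_\mu$ and $\Omega_\mu^1$ by a positive symmetric $H_x$-equivariant operator built from $|ad(\mu^+)|$ and $|T_x|$. The paper simply writes down $L=T_x\circ(-ad(\mu^+)^2)^{-1/4}\circ(T_x^2)^{-1/4}$ and says one checks $\Omega_\mu(Lv,Lw)=\Omega_\mu^1(v,w)$; your $\Phi=(|ad(\mu^+)||T_x|^{-1})^{1/2}$ goes the other direction and satisfies $L^{-1}=S_x\Phi$, so the two intertwiners differ only by the orthogonal symplectic automorphism $S_x$.
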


\begin{proof} We know already that $(V_x,\Omega_{V_x})\simeq([\qgot,{\mu^+}],\Omega_\mu)$. If one takes $L=$ \break $T_x\circ (-ad(\mu^+)^2)^{-1/4}\circ (T_x^2)^{-1/4}$, 
we check easily that $\Omega_{\mu}(L(v),L(w))=(J_{V_x}v,w)$.  
\end{proof}

 \section{Proof of the main theorem}
 
 We start with the following lemma.

\begin{lem}
The quantity $\RR_H(\Fcal,\Lcal_\lambda,\Phi_r^H, Hx)$ does not depend on the choice of the regular element $r$ in the Weyl chamber. 
In the following we will denote it by $Q_{Hx}(\lambda)\in\wR(H)$.
\end{lem}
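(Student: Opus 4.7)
The plan is to deploy a homotopy-invariance argument for the equivariant index of transversally elliptic symbols. The crucial observation, extracted from Lemma \ref{lem-Uzawa}, is that the zero set of the Kirwan vector field $\kappa_r$ equals $Z_\theta$ \emph{independently} of the regular element $r\in{\rm Interior}(\tgot^*_+)$, since the characterization $g^{-1}\theta(g)\in N(T)$ makes no reference to $r$. This is the structural fact that makes the statement even plausible.

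First, I would fix an $H$-orbit $Hx\subset Z_\theta$ and choose once and for all an $H$-invariant open neighborhood $\Ucal$ of $Hx$ in $\Fcal$ such that $\Ucal\cap Z_\theta=Hx$. The existence of such a neighborhood follows from the finiteness of $H\backslash Z_\theta$ established in Section 2.1. Since the set $Z_\theta$ does not depend on $r$, the same neighborhood $\Ucal$ is admissible for Definition \ref{def:indice-localise} for every regular $r$ in the Weyl chamber.

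Next, given two regular elements $r_0,r_1\in{\rm Interior}(\tgot^*_+)$, I would join them by a continuous path $r_t$, $t\in[0,1]$, inside the interior of the Weyl chamber (which is convex, hence path-connected). This produces a continuous family of moment maps $\Phi^H_{r_t}$ and Kirwan vector fields $\kappa_{r_t}$, and consequently a continuous family of shifted symbols $\sigma_{r_t}(\Fcal)\otimes\Lcal_\lambda$ restricted to $\Ucal$. I would then check that every member of this family is $H$-transversally elliptic on $\Ucal$: the locus where $\sigma_{r_t}(\Fcal)$ fails to be elliptic is precisely the zero set of $\kappa_{r_t}$, and inside $\Ucal$ this locus equals $\Ucal\cap Z_\theta=Hx$ for every $t$, which is compact and contained in $\Ucal$.

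Finally, I would invoke the homotopy invariance of the equivariant index for transversally elliptic operators (Atiyah \cite{Atiyah74}, used in the same form in \cite{pep-RR}): the map $t\mapsto\mathrm{index}(\sigma_{r_t}(\Fcal)\otimes\Lcal_\lambda\vert_\Ucal)\in\wR(H)$ is locally constant, hence constant on $[0,1]$. This yields $\RR_H(\Fcal,\Lcal_\lambda,\Phi_{r_0}^H,Hx)=\RR_H(\Fcal,\Lcal_\lambda,\Phi_{r_1}^H,Hx)$, and the common value deserves the notation $Q_{Hx}(\lambda)$. I expect no substantive obstacle: the only delicate point is to maintain transversal ellipticity along the path, but this is automatic from the $r$-independence of $Z_\theta\cap\Ucal=Hx$.
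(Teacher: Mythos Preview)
Your proposal is correct and matches the paper's own argument almost verbatim: the paper takes the straight-line path $r(t)=tr_1+(1-t)r_0$ (using the convexity of the open Weyl chamber), observes that $\{\kappa_{r(t)}=0\}=Z_\theta$ for all $t$ so that $\sigma_{r(t)}(\Fcal)\otimes\Lcal_\lambda\vert_\Ucal$ is a homotopy of transversally elliptic symbols on the fixed neighborhood $\Ucal$, and concludes by homotopy invariance of the index.
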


\begin{proof}
Let $r_0,r_1$ be two regular elements of the Weyl chamber. For $t\in [0,1]$, we consider the regular element $r(t)=tr_1+(1-t)r_0$: the Kirwan vector field 
$\kappa_{r(t)}$ vanishes exactly on $Z_\theta$ for any $t\in [0,1]$.  If $\Ucal$ is an invariant neighbourhood of $Hx$ so that $\Ucal\cap Z_\theta=Hx$, then 
$t\in [0,1]\mapsto \sigma_{r(t)}(\Fcal)\otimes\Lcal_\lambda\vert_\Ucal$ defines an homotopy of transversally elliptic symbols. Accordingly, the equivariant index of 
$\sigma_{r_0}(\Fcal)\otimes\Lcal_\lambda\vert_\Ucal$ and $\sigma_{r_1}(\Fcal)\otimes\Lcal_\lambda\vert_\Ucal$ are equal.  
\end{proof}

 \subsection{Computation of $Q_{Hx}(\lambda)$}\label{sec:computation-Q-Hx-lambda}

The computation of $Q_{Hx}(\lambda)$ is done in three steps.

 \subsubsection{Step 1: holomorphic induction}

Let $H_{\mu^+}\subset H$ be the stabilizer subgroup of $\mu^+:=\Phi^H_r(x)$. By Corollary \ref{coro:model-symplectic}, a symplectic $H$-equivariant  model of a neighborhoood of $Hx$ in 
$\Fcal$ is the manifold $H\times_{H_{\mu^+}} Y_x$ where  
$$
Y_x= H_{\mu^+}\times_{H_x}\left((\hgot_{\mu^+}/\hgot_x)^*\times V_x\right).
$$
The symplectic two form on $Y_x$ is built from the canonical symplectic structure on $H_{\mu^+}\times_{H_x}(\hgot_{\mu^+}/\hgot_x)^*\simeq \T^*
(H_{\mu^+}\slash {H_x})$ and the symplectic structure on $V_x$. The moment map relative to the action of $H_{\mu^+}$ on 
$Y_x$ is
$$
\Phi_{Y_x}([h; \eta,v])= h(\eta + \mu^+ + \Phi_{V_x}(v))\in \hgot^*_{\mu^+},
$$
for $[h; \eta,v]\in H_{\mu^+}\times_{H_x}\left((\hgot_{\mu^+}/\hgot_x)^*\times V_x\right)$. 

Let $\kappa_{Y_x}$ the Kirwan vector field on $Y_x$. It is immediate to check that $[h; \eta,v]\in \{\kappa_{Y_x}=0\}$ if and only if $\eta=0$ and 
$(\mu^+ + \Phi_{V_x}(v))\cdot v=0$. The map $v\in V_x\mapsto \mu^+\cdot v\in V_x$ is bijective and $v\mapsto \Phi_{V_x}(v)\cdot v$ is homogeneous of degree equal to $3$. Then 
there exists $\epsilon>0$ such that 
$$
(\mu^+ + \Phi_{V_x}(v))\cdot v=0 \quad {\rm and} \quad \|v\|\leq \epsilon \  \Longrightarrow \ v=0.
$$
In $Y_x$, we still denote by $x$ the point $[e,0,0]$. We equip $Y_x$ with an invariant almost complex structure that is compatible with the symplectic structure, and 
we denote by $\RR_{H_{\mu^+}}(Y_{x},\Lcal_\lambda\vert_{Y_x},\Phi_{Y_x}, H_{\mu^+} x)$ the Riemann-Roch character on $Y_x$ localized on the component
$H_{\mu^+} x\subset \{\kappa_{Y_x}=0\}$. 

The quotient $\hgot/\hgot_{\mu^+}$, which is equipped with the invariant complex structure  $J_{\mu^+}$ \break $:=ad(\mu^+)(-ad(\mu^+)^2)^{-1/2}$, is  a complex $H_{\mu^+}$-module.

In \cite{pep-RR}[Theorem 7.5], we proved that $Q_{Hx}(\lambda)=\RR_H(\Fcal,\Lcal_\lambda,\Phi_r^H, Hx)$ is equal to
\begin{equation}\label{eq:holomorphic induction}
{\rm Ind}_{H_{\mu^+}}^H\left( \RR_{H_{\mu^+}}(Y_{x},\Lcal_\lambda\vert_{Y_x},\Phi_{Y_x}, H_{\mu^+} x)\otimes \bigwedge \hgot/\hgot_{\mu^+}\right).
\end{equation}

 \subsubsection{Step 2: cotangent induction}

The map $\Phi_x(v):= \mu^+ + \Phi_{V_x}(v)$ is a moment map for the Hamiltonian action of $H_x$ on $V_x$. The moment map on the $H_{\mu^+}$-manifold 
$$
Y_x= H_{\mu^+}\times_{H_x}\left((\hgot_{\mu^+}/\hgot_x)^*\times V_x\right)
$$
is $\Phi_{Y_x}([h; \eta,v])= h(\eta +\Phi_x(v))\in \hgot^*_\mu$. 

Let $\kappa_{V_x}(v)=-\Phi_x(v)\cdot v$ be the Kirwan vector field on $V_x$. We are interested in the connected component $\{0\}$ of 
$\{\kappa_{V_x}=0\}$. We choose a compatible almost complex structure on the symplectic vector space $V_x$ and we denote by  
$\RR_{H_x}(V_x,\Phi_{x}, \{0\})\in \wR(H_x)$ the 
Riemann-Roch character localized on $\{0\}\subset \{\kappa_{V_x}=0\}$.

In Section 3.3 of \cite{pep-vergne:witten} we have proved that 
\begin{equation}\label{eq:cotangent induction}
\RR_{H_{\mu^+}}(Y_{x},\Lcal_\lambda\vert_{Y_x},\Phi_{Y_x}, H_{\mu^+} x)
={\rm Ind}_{H_x}^{H_{\mu^+}}\left(\RR_{H_x}(V_x,\Phi_{x}, \{0\}) \otimes \Lcal_\lambda\vert_{x}\right).
\end{equation}
 
 \subsubsection{Step 3: linear case}

We write $\qgot/\qgot_{\mu^+}$ for the vector space $[\qgot,\mu^+]$ equipped with the complex structure $J_{\mu^+}$. 
So $\qgot/\qgot_{\mu^+}$ is a $H_{\mu^+}$-module and we denote by ${\rm Sym}(\qgot/\qgot_{\mu^+})$ the corresponding symmetric algebra.

We need to compare the virtual $H_x$-modules $\bigwedge_{J_{V_x}} V_x$ and $\bigwedge_{-J_{\mu^+}} V_x$. 
The weight 
$$
\delta(x):=\frac{1}{2}\sum_{\stackrel{\alpha\in\Rgot_{x}^{+}\cap\theta(\Rgot_{x}^{+})}{\theta(\alpha)\neq\alpha}}\alpha \ 
$$
defines a character $\Cbb_{\delta(x)}$ of the abelian group $H_x$. Recall that $m_x\in\mathbb{N}$ corresponds to the quantity 
$ \frac{1}{2}| \Rgot_{x}^{+}\cap\theta(\Rgot_{x}^{+})\cap\{\theta(\alpha)\neq\alpha\}| + \dim \mathbb{E}^{\hbox{\scriptsize\rm nci}}_{x}$.

The following lemma will be proved in Section \ref{sec:Q-Hx-Lambda}.

\begin{lem}\label{lem:wedge-J-compare}
The following identity holds :
$$
\bigwedge_{J_{V_x}} V_x\simeq (-1)^{m_x}\, \Cbb_{\delta(x)}\otimes \det(\mathbb{E}_x^{\hbox{\scriptsize\rm nci}})\otimes\bigwedge_{-J_{\mu^+}} V_x.
$$
\end{lem}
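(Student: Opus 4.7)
The plan is to reduce the comparison of the virtual $H_x$-modules $\bigwedge_{J_{V_x}} V_x$ and $\bigwedge_{-J_{\mu^+}} V_x$ to a direct sum decomposition $V_x = \bigoplus_k W_k$ into $H_x$-stable real planes, each of complex dimension one for both complex structures. The key inputs are that $\theta$ fixes $\mu^+$ pointwise (so $\alpha(\mu^+) = \theta(\alpha)(\mu^+)$ for every root) and that every imaginary root annihilates $\mu^- \in \qgot_x$. I would use the identification $V_x \simeq [\qgot,\mu^+]$ and split $V_x \otimes \Cbb$ into its $\ggot_x$-weight contributions, obtaining
\[
V_x \otimes \Cbb \;\simeq\; \bigoplus_{\{\alpha,\theta(\alpha)\}} \Cbb(e_\alpha - e_{\theta(\alpha)}) \;\oplus\; \bigoplus_{\alpha \in \Rgot_x^{\hbox{\scriptsize nci}}} \ggot_\alpha,
\]
where the first sum runs over $\theta$-orbits of non-imaginary roots with $\alpha(\mu^+) \neq 0$. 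Pairing each summand with its complex conjugate produces $H_x$-stable real planes $W_k$: one per $4$-orbit $\{\pm\alpha,\pm\theta(\alpha)\}$ of non-imaginary roots, and one per positive non-compact imaginary root. Both $J_{\mu^+}$ and $J_{V_x}$ preserve this decomposition since $T_x$ acts by a real scalar on each weight line.

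Next I would determine $S_x = T_x/|T_x|$ explicitly on each $W_k$. A direct computation using $\mu = \mu^+ + \mu^-$ and $\theta(\mu) = \mu^+ - \mu^-$ gives
\[
T_x \;=\; \alpha(\mu^+)^2 - \alpha(\mu^-)^2 \;=\; \alpha(\mu)\,\theta(\alpha)(\mu)
\]
on the $\alpha$-weight line. On a non-compact imaginary piece, $\alpha(\mu^-) = 0$ gives $S_x = +1$. On a non-imaginary $4$-orbit, $\alpha(\mu)$ and $\theta(\alpha)(\mu)$ have the same sign precisely when the orbit meets $\Rgot_x^+ \cap \theta(\Rgot_x^+)$; otherwise $S_x = -1$. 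Therefore $J_{V_x}|_{W_k} = J_{\mu^+}|_{W_k} = -(-J_{\mu^+})|_{W_k}$ on the non-compact imaginary planes and on the $4$-orbits meeting $\Rgot_x^+ \cap \theta(\Rgot_x^+)$ (the ``twisted'' planes), while $J_{V_x}|_{W_k} = -J_{\mu^+}|_{W_k}$ on the remaining orbits.

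Finally, on a one-complex-dimensional $W_k$ of $H_x$-weight $w_k$ with respect to $J_{\mu^+}$, the elementary identity $1 - \Cbb_{-w_k} = -\Cbb_{-w_k}(1 - \Cbb_{w_k})$ gives $\bigwedge_{J_{\mu^+}} W_k = -\Cbb_{w_k} \cdot \bigwedge_{-J_{\mu^+}} W_k$, so each twisted plane contributes a factor $-\Cbb_{w_k}$. For a positive non-compact imaginary root $\alpha$ the weight is $\alpha|_{\hgot_x}$, yielding the total factor $(-1)^{\dim \mathbb{E}^{\hbox{\scriptsize\rm nci}}_{x}} \det(\mathbb{E}^{\hbox{\scriptsize\rm nci}}_{x})$. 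For a $4$-orbit with $\alpha, \theta(\alpha) \in \Rgot_x^+$, the relation $2\alpha(\mu^+) = \alpha(\mu) + \theta(\alpha)(\mu) > 0$ forces the holomorphic representative of $W_k$ to have weight $\alpha|_{\hgot_x}$; summing these over the $\tfrac{1}{2}|\Rgot_x^+ \cap \theta(\Rgot_x^+) \cap \{\theta(\alpha) \neq \alpha\}|$ such orbits and using $(\alpha+\theta(\alpha))|_{\hgot_x} = 2\alpha|_{\hgot_x}$ produces exactly $(-1)^{m_x - \dim \mathbb{E}^{\hbox{\scriptsize\rm nci}}_{x}} \Cbb_{\delta(x)}$. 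Multiplying the two contributions gives the desired $(-1)^{m_x} \Cbb_{\delta(x)} \det(\mathbb{E}^{\hbox{\scriptsize\rm nci}}_{x})$. The main obstacle is the sign bookkeeping in this last step: one must verify that on the twisted $4$-orbits the sign of $\alpha(\mu^+)$ is forced to be positive, so that the twist character is $\Cbb_{+\alpha}$ rather than $\Cbb_{-\alpha}$, which is what makes the sum reassemble into $\Cbb_{\delta(x)}$.
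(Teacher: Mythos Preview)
Your proposal is correct and follows essentially the same approach as the paper. Both arguments decompose $(V_x,J_{V_x})$ and $(V_x,-J_{\mu^+})$ into one-dimensional $H_x$-pieces indexed by $\Bcal:=\Agot_x/\Zbb_2\cup(\Rgot_x^{\hbox{\scriptsize nci}}\cap\Rgot_x^+)$, observe that the two complex structures differ exactly on the pieces where $\alpha(\mu)\alpha(\theta(\mu))>0$ (your computation of $T_x$ is precisely the paper's definition $\tilde{\alpha}=\pm\alpha$ with $\pm=\operatorname{sign}(\alpha(\mu)\alpha(\theta(\mu)))$), and then use $(1-t^{\alpha})=-t^{\alpha}(1-t^{-\alpha})$ to produce the factor $(-1)^{m_x}\Cbb_{\delta(x)}\otimes\det(\mathbb{E}_x^{\hbox{\scriptsize nci}})$; the paper packages the decomposition into a separate lemma (identities $[A]$, $[B]$, $[C]$) and names the twisted index set $\Bcal'$, but the content is the same, including your final observation that $\alpha(\mu)+\theta(\alpha)(\mu)=2\alpha(\mu^+)>0$ forces both summands positive on the twisted non-imaginary orbits.
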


On the vector space $V_x$, we can work with two localized Riemann-Roch characters:
\begin{itemize}
\item $\RR_{H_{x}}(V_{x},\Phi_x,\{0\})$ is defined with the complex structure $J_{V_x}$,
\item $\widetilde{\RR}_{H_{x}}(V_{x},\Phi_x,\{0\})$ is defined with the complex structure $-J_{\mu^+}$.
\end{itemize}
Thanks to the previous Lemma we know that $\RR_{H_{x}}(V_{x},\Phi_x,\{0\})$ is equal to 
$(-1)^{m_x}\, \Cbb_{\delta(x)}\otimes \det(\mathbb{E}_x^{\hbox{\scriptsize\rm nci}})\otimes \widetilde{\RR}_{H_{x}}(V_{x},\Phi_x,\{0\})$.

\begin{prop}
We have
\begin{equation}\label{eq:RR-V-x}
\RR_{H_{x}}(V_{x},\Phi_x,\{0\})=(-1)^{m_x} \Cbb_{\delta(x)} \otimes \det(\mathbb{E}_x^{\hbox{\scriptsize\rm nci}})\otimes {\rm Sym}(\qgot/\qgot_{\mu^+}).
\end{equation}
\end{prop}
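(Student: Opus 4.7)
The plan is to reduce the calculation of $\RR_{H_x}(V_x,\Phi_x,\{0\})$ to a standard linear localization, by first swapping the compatible complex structure $J_{V_x}$ for $-J_{\mu^+}$ and then recognizing the resulting complex $H_x$-module.

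\textit{Step 1 (change of complex structure).} The two $H_x$-invariant almost complex structures $J_{V_x}$ and $-J_{\mu^+}$ on $V_x$ give rise to two localized Riemann-Roch characters whose Dirac symbols differ K-theoretically by the virtual line $\bigwedge_{J_{V_x}}V_x\otimes(\bigwedge_{-J_{\mu^+}}V_x)^{-1}$. Lemma \ref{lem:wedge-J-compare} evaluates this ratio as $(-1)^{m_x}\,\Cbb_{\delta(x)}\otimes\det(\mathbb{E}_x^{\hbox{\scriptsize\rm nci}})$, giving
$$
\RR_{H_{x}}(V_{x},\Phi_x,\{0\})=(-1)^{m_x}\,\Cbb_{\delta(x)}\otimes\det(\mathbb{E}_x^{\hbox{\scriptsize\rm nci}})\otimes\widetilde{\RR}_{H_{x}}(V_{x},\Phi_x,\{0\}),
$$
so that the proposition reduces to proving $\widetilde{\RR}_{H_{x}}(V_{x},\Phi_x,\{0\})={\rm Sym}(\qgot/\qgot_{\mu^+})$.

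\textit{Step 2 (linear localization).} By definition $\qgot/\qgot_{\mu^+}=([\qgot,\mu^+],J_{\mu^+})$, so $(V_x,-J_{\mu^+})$ is its complex conjugate, which for a unitary $H_x$-representation is naturally isomorphic to the dual $(\qgot/\qgot_{\mu^+})^*$. Because $J_{\mu^+}=\ad(\mu^+)|\ad(\mu^+)|^{-1}$, every weight of $\qgot/\qgot_{\mu^+}$ is strictly positive on $\mu^+$, so $\mu^+$ lies in the chamber that makes $\{0\}$ an isolated zero of the Kirwan vector field for $(V_x,-J_{\mu^+})$. I would then apply the linear-Hamiltonian localization formula of \cite{pep-RR,pep-vergne:witten}---which states that for a complex $H_x$-module $W$ with moment map $\nu+\Phi_W$ and $\nu$ in the appropriate chamber, the localized index at the origin is ${\rm Sym}(W^*)$---to $W=(V_x,-J_{\mu^+})\simeq(\qgot/\qgot_{\mu^+})^*$ and $\nu=\mu^+$. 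This yields $\widetilde{\RR}_{H_{x}}(V_{x},\Phi_x,\{0\})={\rm Sym}(\qgot/\qgot_{\mu^+})$, as required.

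The main difficulty lies in the sign and orientation bookkeeping in both steps: one must check that the prefactor produced by Lemma \ref{lem:wedge-J-compare} is exactly $(-1)^{m_x}\,\Cbb_{\delta(x)}\otimes\det(\mathbb{E}_x^{\hbox{\scriptsize\rm nci}})$ and not a determinant-twisted variant, and that the chamber condition for $\mu^+$ relative to the non-compatible complex structure $-J_{\mu^+}$ truly puts us in the case where the linear localization gives ${\rm Sym}(W^*)$ rather than picking up an additional top-exterior-power twist. Once these orientation checks are in place, combining Steps 1 and 2 yields (\ref{eq:RR-V-x}).
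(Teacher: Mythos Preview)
Your proposal is correct and follows the same route as the paper: first invoke Lemma~\ref{lem:wedge-J-compare} to pass from $J_{V_x}$ to $-J_{\mu^+}$, picking up the factor $(-1)^{m_x}\Cbb_{\delta(x)}\otimes\det(\mathbb{E}_x^{\hbox{\scriptsize\rm nci}})$, and then compute $\widetilde{\RR}_{H_x}(V_x,\Phi_x,\{0\})={\rm Sym}(\qgot/\qgot_{\mu^+})$. The only difference is cosmetic: where you invoke a packaged ``linear localization formula'' for the moment map $\mu^++\Phi_{V_x}$, the paper makes the underlying homotopy explicit by setting $\Phi^s=\mu^++s\Phi_{V_x}$ for $s\in[0,1]$, checking that $\{0\}$ stays an isolated zero of the Kirwan field throughout, and then applying \cite[Proposition~5.4]{pep-RR} to the constant map $\mu^+$---this deformation is exactly what justifies the chamber condition you appeal to.
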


\begin{proof}
For $s\in [0,1]$, we consider the $H_x$-equivariant map $\Phi^s:V_x\to\hgot_x^*$ defined by the relation  
$\Phi^s(v)= \mu^+ + s\Phi_{V_x}(v)$. The corresponding Kirwan vector field on $V_x$ is 
$\kappa^s(v)= -\Phi^s(v)\cdot v$.
It is not difficult to see that there exists $\epsilon>0$ such that $\{\kappa^s=0\}\cap\{\|v\|\leq \epsilon\}=\{0\}$ for any 
$s\in [0,1]$. Then a simple deformation argument gives that 
$\widetilde{\RR}_{H_{x}}(V_{x},\Phi^s,\{0\})$ does not depend on $s\in [0,1]$. We have proved that 
$$
\widetilde{\RR}_{H_{x}}(V_{x},\Phi_x,\{0\})=\widetilde{\RR}_{H_{x}}(V_{x},\mu^+,\{0\})
$$
where $\mu^+$ denotes the constant map $\Phi^0$. Standard computations  give 
$\widetilde{\RR}_{H_{x}}(V_{x},\mu^+,\{0\})$ $={\rm Sym}(\qgot/\qgot_{\mu^+})$ (see \cite{pep-RR}[Proposition 5.4]). Our proof is completed.  
\end{proof}

 \subsubsection{Conclusion}
If we use the formulas  (\ref{eq:holomorphic induction}), (\ref{eq:cotangent induction}) and (\ref{eq:RR-V-x}) we obtain
the following expression 
$$
Q_{Hx}(\lambda)=(-1)^{m_x} 
{\rm Ind}_{H_{x}}^H\left( \Cbb_{\lambda_x+\delta(x)} \otimes \det(\mathbb{E}_x^{\hbox{\scriptsize\rm  nci}})\otimes {\rm Sym}(\qgot/\qgot_{\mu^+})\otimes\bigwedge_\Cbb \hgot/\hgot_{\mu^+}\right)
$$
in $\wR(H)$. Here $\Cbb_{\lambda_x}$ is the character of $G_x$ associated to the weight $\lambda_x=g\lambda$.

The previous formula  depends on a choice of a regular element $r$ in the Weyl chamber. In the next section we will propose another expression for $Q_{Hx}(\lambda)$ that does not depend on this choice.

 \subsection{Another expression for $Q_{Hx}(\lambda)$}\label{sec:Q-Hx-Lambda}

Let $\Rgot_x\subset \ggot_x^*$ be the roots for the action of the torus $G_x$ on $\ggot\otimes \Cbb$. The involution $\theta:\tgot^*\to\tgot^*$ leaves 
the set $\Rgot_x$ invariant and a root $\alpha\in\Rgot_x$ is called 
{\em imaginary} if $\theta(\alpha)=\alpha$. We denote respectively by $\Rgot_x^{\hbox{\scriptsize ci}}$ and by $\Rgot_x^{\hbox{\scriptsize nci}}$ 
the subsets of {\em compact imaginary} and {\em non-compact imaginary} roots.

We choose a generic element $r\in\tgot^*_+$ such that $\mu^+=(g\cdot r)^+$ satisfies the following relation : for any $\alpha\in\Rgot_x$, we have 
$$
(\alpha,\mu^+)=0\Longleftrightarrow \theta(\alpha)=-\alpha.
$$

Notice that an imaginary roots $\alpha$ is positive if and only if $(\alpha,\mu^+)>0$.

\begin{defi}
We consider the subset  $\Agot_x\subset \Rgot_x$ defined by the following relations:
$$
\alpha\in \Agot_x \ \Longleftrightarrow\  \alpha(\mu^+)>0\quad {\rm and}\quad \theta(\alpha)\neq \alpha.
$$
\end{defi}

The involution $\theta$ defines a free action of $\Zbb_2$ on the set $\Agot_x$. We denote by $\Agot_x/\Zbb_2$ its quotient. 
For any $\alpha\in \Rgot_x$, we denote by $\Cbb_{\alpha}$ the corresponding $1$-dimensional representation of $G_x$, 
and $\Cbb_{\alpha}\vert_{H_x}$ its restriction to the subgroup $H_x$. We have a natural map 
$[\alpha]\in \Agot_x/\Zbb_2 \longmapsto \Cbb_{\alpha}\vert_{H_x}\in R(H_x)$.

For any $\alpha\in\Rgot_x$ we define 
$$
\tilde{\alpha}=\pm \alpha
$$
where $\pm$ is the sign of $\alpha(\mu)\alpha(\theta(\mu))$.

We consider the $H_x$-modules $\hgot/\hgot_{\mu^+}:=([\hgot,\mu^+], J_{\mu^+})$, $\qgot/\qgot_{\mu^+}:=(V_{x}, J_{\mu^+})$  and $(V_x, J_{V_x})$. 
\begin{lem}
We have the following isomorphisms of $H_x$-modules
\begin{eqnarray*}
\hgot/\hgot_{\mu^+}&\simeq& \bigoplus_{[\alpha]\in \Agot_x/\Zbb_2} \Cbb_{\alpha}\vert_{H_x}\oplus \bigoplus_{\alpha\in\Rgot_x^{\hbox{\scriptsize \rm ci}}\cap\Rgot^{+}_{x}}\Cbb_{\alpha}\vert_{H_x}\qquad \qquad[A],\\
\qgot/\qgot_{\mu^+}&\simeq& \bigoplus_{[\alpha]\in \Agot_x/\Zbb_2} \Cbb_{\alpha}\vert_{H_x}\oplus \bigoplus_{\alpha\in\Rgot_x^{\hbox{\scriptsize \rm nci}}\cap\Rgot^{+}_{x}} \Cbb_{\alpha}\vert_{H_x}\qquad \qquad[B],\\
(V_{x}, J_{V_x})&\simeq &\bigoplus_{[\alpha]\in \Agot_x/\Zbb_2} \Cbb_{\tilde{\alpha}}\vert_{H_x}\oplus \bigoplus_{\alpha\in\Rgot_x^{\hbox{\scriptsize \rm nci}}\cap\Rgot^{+}_{x}}
\Cbb_{\tilde{\alpha}}\vert_{H_x}\qquad \qquad [C].
\end{eqnarray*}
\end{lem}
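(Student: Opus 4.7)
The proof rests on the root-space decomposition of $\ggot\otimes\Cbb$ under the $\theta$-stable Cartan $\ggot_x$, together with the actions of $\theta$, $J_{\mu^+}$, and $J_{V_x}$ on each root space $(\ggot\otimes\Cbb)_\alpha$. The involution $\theta$ either preserves such a root space (when $\alpha$ is imaginary, as $+1$ if ci and $-1$ if nci) or swaps it with $(\ggot\otimes\Cbb)_{\theta\alpha}$ (when $\theta\alpha\neq\alpha$); in the latter case, fixing $X_\alpha\in(\ggot\otimes\Cbb)_\alpha\setminus\{0\}$, the $(\pm 1)$-eigenspaces of $\theta$ on the pair are the complex lines $\Cbb(X_\alpha\pm\theta X_\alpha)$. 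Because $\hgot_x\subset G_x^\theta$, a direct check gives $\chi_{\theta\alpha}|_{H_x}=\chi_\alpha|_{H_x}$, so each such diagonal line is an $H_x$-weight space of character $\Cbb_\alpha|_{H_x}$.

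For $[A]$ and $[B]$, I extend $J_{\mu^+}$ $\Cbb$-linearly to $[\ggot,\mu^+]\otimes\Cbb$: on $(\ggot\otimes\Cbb)_\alpha$ it acts as the scalar $i\cdot\mathrm{sign}(\alpha(\mu^+))$, since $\ad(\mu^+)$ acts as $i\alpha(\mu^+)$. Hence the complex $H_x$-module $([\hgot,\mu^+],J_{\mu^+})$ is the intersection of the $(+i)$-eigenspace of $J_{\mu^+}$ with $\hgot\otimes\Cbb$, namely the sum over $\alpha(\mu^+)>0$ of the $\theta$-fixed part of $(\ggot\otimes\Cbb)_\alpha$. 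The genericity assumption on $\mu^+$ ensures imaginary roots satisfy $\alpha(\mu^+)>0\iff\alpha\in\Rgot_x^+$, so the ci contribution is $\bigoplus_{\alpha\in\Rgot_x^{\mathrm{ci}}\cap\Rgot_x^+}\Cbb_\alpha|_{H_x}$, and the non-imaginary contribution is $\bigoplus_{[\alpha]\in\Agot_x/\Zbb_2}\Cbb_\alpha|_{H_x}$ (one diagonal $X_\alpha+\theta X_\alpha$ per pair $\{\alpha,\theta\alpha\}$ with $\alpha\in\Agot_x$). This yields $[A]$; the same argument with nci in place of ci gives $[B]$.

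For $[C]$, I compute $J_{V_x}=J_{\mu^+}\circ S_x$ on root spaces. Since $\mu,\theta\mu\in\ggot_x$ commute, $T_x=-\ad(\mu)\ad(\theta\mu)$ acts on $(\ggot\otimes\Cbb)_\alpha$ as the real scalar $\alpha(\mu)\alpha(\theta\mu)=\alpha(\mu^+)^2-\alpha(\mu^-)^2$. Writing $\epsilon_\alpha:=\mathrm{sign}(\alpha(\mu)\alpha(\theta\mu))$, so that $\tilde\alpha=\epsilon_\alpha\alpha$, one finds $J_{V_x}$ acts on $(\ggot\otimes\Cbb)_\alpha$ as $i\cdot\mathrm{sign}(\alpha(\mu^+))\cdot\epsilon_\alpha$. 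For an nci root $\alpha\in\Rgot_x^+$ one has $\alpha(\mu^-)=0$, hence $\epsilon_\alpha=+1$ and $\alpha(\mu^+)>0$, so $(\ggot\otimes\Cbb)_\alpha$ lies in the $(+i)$-eigenspace of $J_{V_x}$ with character $\Cbb_\alpha|_{H_x}=\Cbb_{\tilde\alpha}|_{H_x}$. For a non-imaginary pair $\{\alpha,\theta\alpha\}$ with $\alpha\in\Agot_x$, both $X_\alpha$ and $\theta X_\alpha$ receive the same $J_{V_x}$-eigenvalue $\epsilon_\alpha i$ (because $\theta\alpha(\mu^+)=\alpha(\mu^+)$ and $\epsilon_{\theta\alpha}=\epsilon_\alpha$), so the diagonal $X_\alpha-\theta X_\alpha\in[\qgot,\mu^+]\otimes\Cbb$ sits in the $(\epsilon_\alpha i)$-eigenspace. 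If $\epsilon_\alpha=+1$ this diagonal itself contributes $\Cbb_\alpha|_{H_x}=\Cbb_{\tilde\alpha}|_{H_x}$ to $(V_x,J_{V_x})$; if $\epsilon_\alpha=-1$, the analogous diagonal for the companion pair $\{-\alpha,-\theta\alpha\}$ with $-\alpha\in-\Agot_x$ lies in the $(+i)$-eigenspace and contributes $\Cbb_{-\alpha}|_{H_x}=\Cbb_{\tilde\alpha}|_{H_x}$.

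The main obstacle is the sign bookkeeping in $[C]$: one must verify that whichever representative of $\{\alpha,-\alpha\}$ actually contributes to the $(+i)$-eigenspace of $J_{V_x}$, the resulting $H_x$-character is always $\tilde\alpha$, not merely $\pm\alpha$. The dimension identity $\dim_\Cbb(V_x,J_{V_x})=|\Rgot_x^{\mathrm{nci}}\cap\Rgot_x^+|+|\Agot_x|/2$ serves as a final consistency check.
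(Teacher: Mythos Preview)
Your proof is correct and follows essentially the same approach as the paper: decompose $[\ggot,\mu^+]\otimes\Cbb$ into root spaces, track the action of $\theta$ (via the diagonals $X_\alpha\pm\theta X_\alpha$), and compute the scalars by which $J_{\mu^+}$ and $J_{V_x}$ act on each root space. The only cosmetic difference is that the paper phrases the identification via the real-part projection $\mathbf{r}:\bigoplus_{\alpha(\mu^+)>0}(\ggot\otimes\Cbb)_\alpha\to[\ggot,\mu^+]$ composed with the orthogonal projections $\mathbf{p}_1,\mathbf{p}_2$ onto $[\qgot,\mu^+]$ and $[\hgot,\mu^+]$, whereas you take $(+i)$-eigenspaces in the complexification; these are inverse descriptions of the same $(V,J)\simeq V^{1,0}$ correspondence, and your treatment of the sign bookkeeping in $[C]$ is in fact more explicit than the paper's one-line ``we check that $(V_x(\alpha),J_{V_x})\simeq\Cbb_{\tilde\alpha}|_{H_x}$.''
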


\begin{proof}
Thanks to Lemma \ref{lem:V-x-complexe}, we know that the $H_x$-module $(V_{x}, J_{V_x})$ is isomorphic to the vector space $[\qgot,{\mu^+}]$ equipped with the complex structure $J_{V_x}:= J_{\mu^+}\circ S_x$.
We consider the vector spaces $[\qgot, \mu^+]$ and $[\ggot, \mu^+]$ equipped with the complex structure $J_{\mu^+}$. 
The projection (taking the real part)
${\bf r}:\ggot\otimes \Cbb\to \ggot$ induces an isomorphism of $G_x$-modules
$$
{\bf r} : \bigoplus_{\alpha(\mu^+)>0} (\ggot\otimes \Cbb)_\alpha\quad \longrightarrow\quad [\ggot, \mu^+].
$$

The orthogonal projections ${\bf p}_1: [\ggot, \mu^+]\to  [\qgot, \mu^+]$ and ${\bf p}_2: [\ggot, \mu^+]\to  [\hgot, \mu^+]$ commute with the $H_x$-action, so 
the maps 
\begin{eqnarray*}
{\bf p}_1\circ {\bf r}:\bigoplus_{\alpha(\mu^+)>0} (\ggot\otimes \Cbb)_\alpha &\longrightarrow&   [\qgot, \mu^+],\\
{\bf p}_2\circ {\bf r}:\bigoplus_{\alpha(\mu^+)>0} (\ggot\otimes \Cbb)_\alpha &\longrightarrow&  [\hgot, \mu^+]
\end{eqnarray*}
are surjective morphisms of $H_x$-modules.

Let $V^1_x(\alpha)={\bf p}_1\circ {\bf r}((\ggot\otimes \Cbb)_\alpha)$. We notice that $\dim_\Cbb V^1_x(\alpha)\in \{0,1\}$:  
$V^1_x(\alpha)=\{0\}$ only if $\alpha$ is a non-compact imaginary root and $V^1_x(\alpha)\simeq \Cbb_{\alpha}\vert_{H_x}$ when $V^1_x(\alpha)\neq \{0\}$. 
We notice also that $V^1_x(\alpha)=V^1_x(\theta(\alpha))$, hence 
$$
\qgot/\qgot_{\mu^+}=([\qgot,{\mu^+}],J_{\mu^+})\simeq\bigoplus_{[\alpha]\in \Agot_x/\Zbb_2} V^1_x(\alpha)\oplus \bigoplus_{\alpha\in\Rgot_x^{\hbox{\scriptsize nci}}\cap\Rgot^{+}_{x}} V^1_x(\alpha).
$$
The identity $[B]$ is proved.

Similarly we consider $V^2_x(\alpha)={\bf p}_2\circ {\bf r}((\ggot\otimes \Cbb)_\alpha)$. We notice that $\dim_\Cbb V^2_x(\alpha)\in \{0,1\}$: 
$V^2_x(\alpha)=\{0\}$ only if $\alpha$ is a compact imaginary root and $V^2_x(\alpha)\simeq \Cbb_{\alpha}\vert_{H_x}$ when $V^2_x(\alpha)\neq \{0\}$. 
We notice also that $V^2_x(\alpha)=V^2_x(\theta(\alpha))$, hence 
$$
\hgot/\hgot_{\mu^+}=([\hgot,{\mu^+}],J_{\mu^+})\simeq\bigoplus_{[\alpha]\in \Agot_x/\Zbb_2} V^2_x(\alpha)\oplus \bigoplus_{\alpha\in\Rgot_x^{\hbox{\scriptsize ci}}\cap\Rgot^{+}_{x}} V^2_x(\alpha).
$$
The identity $[A]$ is proved.

Finally we check that the complex structures $J_{\mu^+}$ and $J_{V_x}$ preserve each $V^1_x(\alpha)$ and that $(V_x(\alpha),J_{V_x})\simeq \Cbb_{\tilde\alpha}\vert_{H_x}$ when 
$(\alpha,\mu^+)>0$. The identity $[C]$ follows.  
\end{proof}

\medskip

We consider the $H_x$-module $\mathbb{V}_x:= \sum_{[\alpha]\in \Agot_x/\Zbb_2}\Cbb_{\alpha}\vert_{H_x}$, 
and the $G_x$-modules 
$\mathbb{E}^{\hbox{\scriptsize nci}}_{x}:=\sum_{\alpha\in\Rgot_x^{\hbox{\scriptsize nci}}\cap\Rgot^{+}_{x}}\Cbb_{\alpha}$ and 
$\mathbb{E}^{\hbox{\scriptsize ci}}_{x}:=\sum_{\alpha\in\Rgot_x^{\hbox{\scriptsize ci}}\cap\Rgot^{+}_{x}}\Cbb_{\alpha}$. 
In the previous lemma we have proved that $H_x$-modules $\hgot/\hgot_{\mu^+}$ and $\qgot/\qgot_{\mu^+}$ are respectively isomorphic to 
$\mathbb{V}_x\oplus \mathbb{E}^{\hbox{\scriptsize ci}}_{x}$ and  $\mathbb{V}_x\oplus \mathbb{E}^{\hbox{\scriptsize nci}}_{x}$. If we use the fact that 
${\rm Sym}(\mathbb{V}_x)\otimes \bigwedge \mathbb{V}_x=1$, we get the 
following corollary.

\begin{coro}\label{coro:simplification}
We have the following identity of virtual $H_x$-modules:
$$
{\rm Sym}(\qgot/\qgot_{\mu^+})\otimes \bigwedge \hgot/\hgot_{\mu^+}\simeq {\rm Sym}(\mathbb{E}^{\hbox{\scriptsize \rm nci}}_{x})\otimes \bigwedge \mathbb{E}^{\hbox{\scriptsize \rm ci}}_{x}.
$$
\end{coro}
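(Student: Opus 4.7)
The plan is to feed the decompositions $[A]$ and $[B]$ of the preceding lemma into the left-hand side and then cancel the common $\mathbb{V}_x$ factor via a Koszul-type identity. Since the statement is purely algebraic at this point, no further geometric input is required beyond the lemma.

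Starting from $\hgot/\hgot_{\mu^+} \simeq \mathbb{V}_x \oplus \mathbb{E}^{\hbox{\scriptsize \rm ci}}_{x}$ and $\qgot/\qgot_{\mu^+} \simeq \mathbb{V}_x \oplus \mathbb{E}^{\hbox{\scriptsize \rm nci}}_{x}$ as $H_x$-modules, multiplicativity of $\mathrm{Sym}(\cdot)$ and of $\bigwedge(\cdot)$ under direct sums gives
$$\mathrm{Sym}(\qgot/\qgot_{\mu^+}) \otimes \bigwedge (\hgot/\hgot_{\mu^+}) \simeq \Bigl(\mathrm{Sym}(\mathbb{V}_x) \otimes \bigwedge \mathbb{V}_x\Bigr) \otimes \mathrm{Sym}(\mathbb{E}^{\hbox{\scriptsize \rm nci}}_{x}) \otimes \bigwedge \mathbb{E}^{\hbox{\scriptsize \rm ci}}_{x}.$$
The corollary then reduces to the Koszul cancellation $\mathrm{Sym}(\mathbb{V}_x) \otimes \bigwedge \mathbb{V}_x = 1$ in $\widehat{R}(H_x)$.

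To verify this cancellation, I would write $\mathbb{V}_x = \bigoplus_{[\alpha] \in \Agot_x/\Zbb_2} \Cbb_\alpha|_{H_x}$ as a sum of one-dimensional $H_x$-modules and invoke multiplicativity a second time: it suffices to show $\mathrm{Sym}(\Cbb_\alpha|_{H_x}) \otimes \bigwedge \Cbb_\alpha|_{H_x} = 1$ on each summand, which is the character-theoretic identity $(1-\Cbb_\alpha|_{H_x})^{-1} \cdot (1-\Cbb_\alpha|_{H_x}) = 1$, i.e.\ the usual Koszul resolution of the trivial module over a polynomial algebra in one variable.

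The only point requiring care is that $\mathrm{Sym}(\mathbb{V}_x)$ is an \emph{infinite}-dimensional representation, so one must know a priori that it lies in the completed ring $\widehat{R}(H_x)$ for the termwise manipulations above to be legitimate. This admissibility is guaranteed by the genericity condition on $r$: every weight $\alpha \in \Agot_x$ satisfies $\alpha(\mu^+)>0$, and since $\mu^+ \in \hgot_x$ this remains a strictly positive linear functional on the support of $\mathrm{Sym}(\mathbb{V}_x)$, so each $H_x$-weight occurs in only finitely many symmetric powers. Once this admissibility is granted, the one-dimensional Koszul identity extends factor by factor to the direct sum, and the corollary follows.
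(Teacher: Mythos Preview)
Your proof is correct and follows exactly the same route as the paper: use the decompositions $[A]$ and $[B]$ to factor out $\mathbb{V}_x$, then apply the Koszul identity $\mathrm{Sym}(\mathbb{V}_x)\otimes\bigwedge\mathbb{V}_x=1$. The paper states the Koszul cancellation without further comment, so your added verification via one-dimensional factors and the admissibility check (using $\mu^+\in\hgot_x$ and $\alpha(\mu^+)>0$) are welcome extra detail rather than a different argument.
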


\medskip

{\em Proof of Lemma \ref{lem:wedge-J-compare}.}  Let $\Bcal:=\Agot_x/\Zbb_2\bigcup (\Rgot_x^{\hbox{\scriptsize nci}}\cap\Rgot^{+}_{x})$. We see that 
$\bigwedge_{J_{V_x}} V_x =$\break $ \prod_{\alpha\in \Bcal}(1-t^{\tilde{\alpha}})$ whereas $\bigwedge_{-J_{\mu^+}} V_x = \prod_{\alpha\in \Bcal}(1-t^{-\alpha})$. Accordingly 
we get $\bigwedge_{J_{V_x}} V_x\simeq (-1)^{|\Bcal' |}\, \Cbb_{\eta}\otimes \bigwedge_{-J_{\mu^+}} V_x$ where $\Bcal'=\{\alpha\in \Bcal,\tilde{\alpha}=\alpha\}$ and 
$\eta=\sum_{\alpha\in\Bcal'}\alpha$. Now it is easy to check that an element $\alpha\in\Bcal$ belongs to $\Bcal'$ if and only if $\alpha$ and $\theta(\alpha)$ both belong to 
$\Rgot_x^+$. In other words 
$$
\Bcal'= \left\{\alpha\in\Rgot_{x}^{+}\cap\theta(\Rgot_{x}^{+}),\ \theta(\alpha)\neq\alpha\right\}\slash \Zbb_2 \bigcup\Rgot_x^{\hbox{\scriptsize nci}}\cap\Rgot^{+}_{x}. 
$$
We have proved that 
$$
\bigwedge_{J_{V_x}} V_x\simeq (-1)^{m_x}\, \Cbb_{\delta(x)}\otimes\det(\mathbb{E}^{\hbox{\scriptsize \rm nci}}_{x})\otimes \bigwedge_{-J_{\mu^+}} V_x.
$$
$\Box$

\medskip

Finally, thanks to Lemma \ref{lem:wedge-J-compare} and Corollary \ref{coro:simplification}, we obtain the final formula for $Q_{Hx}(\lambda)$ (that does not depend on the choice of $r$): 
$$
Q_{Hx}(\lambda)=(-1)^{m_x} 
{\rm Ind}_{H_{x}}^H\left(\Cbb_{\lambda_x+\delta(x)} \otimes\det(\mathbb{E}^{\hbox{\scriptsize \rm nci}}_{x})\otimes  {\rm Sym}(\mathbb{E}^{\hbox{\scriptsize \rm nci}}_{x})\otimes\bigwedge \mathbb{E}^{\hbox{\scriptsize ci}}_{x}\right).
$$

\subsection{Computation of the virtual module  $\mathbb{M}_x(\lambda)$} \label{sec:formule-M-x}

According to Theorem \ref{theo-principal}, we have the decomposition $V_{\lambda}^{G}\vert_{H}=\sum_{\bar{x}} Q_{\bar{x}}(\lambda)$
where $Q_{\bar{x}}(\lambda)={\rm Ind}^{H}_{H_{x}}\left( \mathbb{A}_{x}(\lambda)\right)$, and $\mathbb{A}_{x}(\lambda)\in \wR(H_x)$ has the following description
$$
\mathbb{A}_{x}(\lambda)=
\frac{1}{|W^H_x|}\sum_{w\in W}(-1)^{m_{xw}} \Cbb_{\lambda_{xw}+\delta(xw)}
\otimes \det(\mathbb{E}^{\hbox{\scriptsize\rm nci}}_{xw})\otimes {\rm Sym}(\mathbb{E}^{\hbox{\scriptsize\rm nci}}_{xw}) \otimes \bigwedge 
\mathbb{E}^{\hbox{\scriptsize\rm  ci}}_{xw}.
$$

The aim of this section is to simplify the expression of the virtual $H_x$-module $\mathbb{A}_{x}(\lambda)$. We start by comparing the $G_x$-modules $\mathbb{E}^{\hbox{\scriptsize\rm nci}}_{xw}$ and 
$\mathbb{E}^{\hbox{\scriptsize\rm nci}}_{x}$. We use the decomposition 
$\mathbb{E}^{\hbox{\scriptsize\rm nci}}_{x}=\left(\mathbb{E}^{\hbox{\scriptsize\rm nci}}_{x}\right)^+_w\oplus \left(\mathbb{E}^{\hbox{\scriptsize\rm nci}}_{x}\right)^-_w$
where 
$$
\left(\mathbb{E}^{\hbox{\scriptsize\rm nci}}_{x}\right)^+_w:=\sum_{\alpha\in \Rgot_x^{\hbox{\tiny \rm nci}}\cap\Rgot^{+}_x\cap \Rgot^{+}_{xw}}\Cbb_{\alpha},\quad{\rm and}\quad
\left(\mathbb{E}^{\hbox{\scriptsize\rm nci}}_{x}\right)^-_w=\sum_{\alpha\in \Rgot_x^{\hbox{\tiny \rm nci}}\cap\Rgot^{+}_x\cap -\Rgot^{+}_{xw}}\Cbb_{\alpha}.
$$

We have the following basic lemma (see Lemma \ref{lem:formulation-M-lambda}).

\begin{lem}\label{lem:polarized-module} The $G_x$-module $\vert\mathbb{E}^{\hbox{\scriptsize\rm nci}}_{x}\vert_w:=\left(\mathbb{E}^{\hbox{\scriptsize\rm nci}}_{x}\right)^+_w\oplus \overline{\left(\mathbb{E}^{\hbox{\scriptsize\rm nci}}_{x}\right)^-_w}$ is isomorphic to 
$\mathbb{E}^{\hbox{\scriptsize\rm nci}}_{xw}$.

\end{lem}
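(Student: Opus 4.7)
The plan is a direct root-theoretic bookkeeping argument. The crucial starting observation is that $G_{xw}=G_x$: since $w\in N(T)$, we have $(gw)T(gw)^{-1}=gTg^{-1}$. Consequently $\Rgot_{xw}=\Rgot_x$ as subsets of $\ggot_x^*$, and since whether a root is noncompact imaginary ($\theta\alpha=\alpha$ together with $\theta$ acting as $-\mathrm{Id}$ on $(\ggot\otimes\Cbb)_\alpha$) depends only on the intrinsic $\theta$-action on $\ggot_x$ — not on any choice of positive system — we also have $\Rgot_{xw}^{\hbox{\scriptsize\rm nci}}=\Rgot_x^{\hbox{\scriptsize\rm nci}}$. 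What distinguishes $x$ from $xw$ is therefore only the positive system: $\Rgot_x^+$ versus $\Rgot_{xw}^+$, two Weyl-conjugate choices on the common root datum.

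I would next record the elementary fact that $\Rgot_x^{\hbox{\scriptsize\rm nci}}$ is stable under the involution $\alpha\mapsto -\alpha$. The identity $\theta(-\alpha)=-\alpha$ is trivial, and since $\theta$ commutes with the complex conjugation on $\ggot\otimes\Cbb$ that exchanges $(\ggot\otimes\Cbb)_\alpha$ with $(\ggot\otimes\Cbb)_{-\alpha}$, the $\theta$-action on the two weight spaces is the same scalar; so if $\alpha$ is noncompact imaginary then so is $-\alpha$. The same compatibility yields $\overline{\Cbb_\alpha}\simeq \Cbb_{-\alpha}$ as $G_x$-modules, which is the only module-theoretic ingredient needed.

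With these two points in hand, the lemma reduces to a reindexing. Unwinding the definitions,
\[
\vert\mathbb{E}^{\hbox{\scriptsize\rm nci}}_{x}\vert_w \simeq \bigoplus_{\alpha\in A}\Cbb_\alpha \ \oplus\ \bigoplus_{\alpha\in B}\Cbb_{-\alpha},
\]
with $A=\Rgot_x^{\hbox{\scriptsize\rm nci}}\cap\Rgot_x^+\cap\Rgot_{xw}^+$ and $B=\Rgot_x^{\hbox{\scriptsize\rm nci}}\cap\Rgot_x^+\cap(-\Rgot_{xw}^+)$. Substituting $\beta=-\alpha$ in the second sum — legitimate by the stability just noted — reindexes it over $B'=\Rgot_x^{\hbox{\scriptsize\rm nci}}\cap(-\Rgot_x^+)\cap\Rgot_{xw}^+$ with summands $\Cbb_\beta$. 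Since $A$ and $B'$ partition $\Rgot_x^{\hbox{\scriptsize\rm nci}}\cap\Rgot_{xw}^+$ according to whether the root lies in $\Rgot_x^+$ or in its opposite, the total equals $\bigoplus_{\alpha\in\Rgot_x^{\hbox{\scriptsize\rm nci}}\cap\Rgot_{xw}^+}\Cbb_\alpha=\mathbb{E}^{\hbox{\scriptsize\rm nci}}_{xw}$. I do not expect any genuine obstacle; the only subtlety is the commutation of $\theta$ with complex conjugation invoked in the second paragraph, which underlies both the stability of $\Rgot_x^{\hbox{\scriptsize\rm nci}}$ under negation and the identification $\overline{\Cbb_\alpha}\simeq\Cbb_{-\alpha}$.
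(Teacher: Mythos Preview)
Your proof is correct and follows essentially the same route as the paper. The paper records the key equalities $\Rgot_{xw}=\Rgot_x$, $\Rgot_x^+=g(\Rgot^+)$, $\Rgot_{xw}^+=g(w\Rgot^+)$ and then packages the reindexing via the polarized-root notation $|\alpha|_w$ (equal to $\alpha$ if $\alpha\in\Rgot_{xw}^+$ and to $-\alpha$ otherwise), writing $\mathbb{E}^{\hbox{\scriptsize\rm nci}}_{xw}\simeq\sum_{\alpha\in\Rgot_x^{\hbox{\tiny\rm nci}}\cap\Rgot_x^+}\Cbb_{|\alpha|_w}$; your explicit splitting into the index sets $A$ and $B'$ and the substitution $\beta=-\alpha$ is exactly this computation unrolled, with the stability of $\Rgot_x^{\hbox{\scriptsize\rm nci}}$ under negation made explicit where the paper leaves it implicit.
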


Let $\rho=\tfrac{1}{2}\sum_{\alpha\in\Rgot^+}\alpha$. We denote by $w\bullet\lambda=w(\lambda+\rho)-\rho$ the affine action of the Weyl group on the lattice 
$\Lambda$.

The main result of this section is the following proposition.

\begin{prop}\label{prop:formule-M-lambda} Let $x\in Z_\theta$. We have 
$$
\mathbb{A}_{x}(\lambda)=\mathbb{M}_x(\lambda)\otimes\Cbb_{\delta(x)} \otimes \bigwedge 
\mathbb{E}^{\hbox{\scriptsize\rm  ci}}_{x}
$$
 where $\mathbb{M}_x(\lambda)\in \wR(H_x)$ is defined by the following expression

$$
\mathbb{M}_{x}(\lambda)=
\frac{(-1)^{n_x}}{|W^H_x|}\sum_{w\in W}(-1)^{k_{x,w}}\ \Cbb_{(w\bullet\lambda)_{x}}\otimes \det(\left(\mathbb{E}^{\hbox{\scriptsize\rm nci}}_{x}\right)^+_w)
\otimes {\rm Sym}(|\mathbb{E}^{\hbox{\scriptsize\rm nci}}_{x}|_w),
$$
and
\begin{itemize}
\item $k_{x,w}= |\Rgot^{+}_x\cap \Rgot^{+}_{xw}\cap\{\theta(\alpha)\neq \pm \alpha\}| +|\Rgot^{+}_x\cap \Rgot^{+}_{xw}\cap \Rgot_x^{\hbox{\scriptsize \rm ci}} |$,
\item $n_x:=| \theta(\Rgot^{+}_x)\cap \Rgot^{+}_x| - \frac{1}{2}| \theta(\Rgot^{+}_x)\cap \Rgot^{+}_x\cap\{\theta(\alpha)\neq\alpha\}|$.
\end{itemize}
\end{prop}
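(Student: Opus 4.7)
The strategy is to match $\mathbb{A}_x(\lambda)$ and $\mathbb{M}_x(\lambda)\otimes\Cbb_{\delta(x)}\otimes\bigwedge\mathbb{E}^{\hbox{\scriptsize\rm ci}}_x$ summand by summand in $w\in W$. The crucial observation is that $G_{xw}=G_x$ (since $xw=gwT$ has the same stabilizer as $x=gT$), hence $H_{xw}=H_x$, $\Rgot_{xw}=\Rgot_x$, and $\theta$ acts identically on both root sets; in particular the compact and non-compact imaginary classifications, as well as the subgroup $H_x$, are intrinsic to $\ggot_x$. The only object that genuinely depends on $w$ is the positive subsystem $\Rgot^+_{xw}$, which differs from $\Rgot^+_x$ by a Weyl-group rearrangement.

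To carry out the comparison, for each $w$ I partition $\Rgot^+_x$ according to its intersection with $\Rgot^+_{xw}$ and imaginary type: $A_w=\Rgot^{\hbox{\scriptsize\rm nci}}_x\cap\Rgot^+_x\cap\Rgot^+_{xw}$, $B_w=\Rgot^{\hbox{\scriptsize\rm nci}}_x\cap\Rgot^+_x\cap(-\Rgot^+_{xw})$, $C_w=\Rgot^{\hbox{\scriptsize\rm ci}}_x\cap\Rgot^+_x\cap\Rgot^+_{xw}$, $D_w=\Rgot^{\hbox{\scriptsize\rm ci}}_x\cap\Rgot^+_x\cap(-\Rgot^+_{xw})$, and let $E_w^{\mathrm{cx}}$ denote the complex roots in $\Rgot^+_x\cap(-\Rgot^+_{xw})$. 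The preceding lemma gives $\mathbb{E}^{\hbox{\scriptsize\rm nci}}_{xw}\simeq\vert\mathbb{E}^{\hbox{\scriptsize\rm nci}}_x\vert_w$ as $H_x$-modules, so the two ${\rm Sym}$-factors automatically agree. The elementary identity $1-\Cbb_{-\alpha}=-\Cbb_{-\alpha}(1-\Cbb_\alpha)$ applied to the roots of $\Rgot^+_{xw}\setminus\Rgot^+_x$ produces $\bigwedge\mathbb{E}^{\hbox{\scriptsize\rm ci}}_{xw}=(-1)^{|D_w|}\Cbb_{-\sigma_D}\otimes\bigwedge\mathbb{E}^{\hbox{\scriptsize\rm ci}}_x$ and $\det\mathbb{E}^{\hbox{\scriptsize\rm nci}}_{xw}=\Cbb_{-\sigma_B}\otimes\det((\mathbb{E}^{\hbox{\scriptsize\rm nci}}_x)^+_w)$ with $\sigma_B=\sum_{\alpha\in B_w}\alpha$, $\sigma_D=\sum_{\alpha\in D_w}\alpha$. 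Combined with the elementary relation $(w\bullet\lambda)_x=\lambda_{xw}+\rho_{xw}-\rho_x$ and $\rho_x-\rho_{xw}=\sum_{\alpha\in E_w}\alpha$ (where $E_w=B_w\cup D_w\cup E_w^{\mathrm{cx}}$), the proposition reduces to two statements: the character identity $\delta(x)-\delta(xw)\equiv\sum_{\alpha\in E_w^{\mathrm{cx}}}\alpha$ modulo the annihilator of $\hgot_x$ in $\ggot_x^*$ (that is, modulo the $(-1)$-eigenspace of $\theta$, so that equality holds as $H_x$-characters), together with the sign identity $(-1)^{m_{xw}+|D_w|}=(-1)^{n_x+k_{x,w}}$.

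The main obstacle is verifying these two identities. I would do so by an orbit-by-orbit analysis over the $\langle\theta,-1\rangle$-orbits of $\Rgot_x$. For a complex orbit $\{\pm\alpha,\pm\theta(\alpha)\}$, both $\Rgot^+_x$ and $\Rgot^+_{xw}$ each select two elements, parameterised by sign pairs $(\epsilon_1^\bullet,\epsilon_2^\bullet)\in\{\pm1\}^2$. A direct four-case computation shows that the orbital contribution to $\delta(x)-\delta(xw)-\sum_{E_w^{\mathrm{cx}}}\alpha$ is always a rational multiple of $\alpha-\theta(\alpha)$, hence anti-$\theta$-invariant and vanishing on $\hgot_x$; this subtlety (equality as $H_x$-characters rather than $G_x$-characters) is precisely what makes the summand-by-summand identification possible. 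The sign identity, after expanding the definitions of $m_{xw}$, $n_x$, and $k_{x,w}$ in terms of $|A_w|,|B_w|,|C_w|,|D_w|$ and using $\dim\mathbb{E}^{\hbox{\scriptsize\rm nci}}_{xw}=\dim\mathbb{E}^{\hbox{\scriptsize\rm nci}}_x$, reduces on each complex orbit to the elementary parity statement $\mathbf{1}_{a=b}+\mathbf{1}_{c=d}\equiv\mathbf{1}_{a=c}+\mathbf{1}_{b=d}\pmod{2}$ for $a,b,c,d\in\{\pm1\}$.
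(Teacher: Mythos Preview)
Your approach is correct and structurally identical to the paper's: both reduce the proposition, summand by summand in $w$, to the same two residual identities --- the $H_x$-character identity for $\delta(xw)-\delta(x)$ and the parity identity $m_{xw}+|D_w|\equiv n_x+k_{x,w}\pmod 2$ (in the paper's notation $|D_w|=d^{\hbox{\scriptsize ci}}_{x,w}$). The intermediate formulas you derive for $\det(\mathbb{E}^{\hbox{\scriptsize nci}}_{xw})$ and $\bigwedge\mathbb{E}^{\hbox{\scriptsize ci}}_{xw}$ are exactly points 2 and 3 of the paper's Lemma~\ref{lem:formulation-M-lambda}.

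The verification of the two residual identities differs in style. For the character identity, the paper avoids case analysis by observing globally that $\rho_x+\theta(\rho_x)=2\delta(x)+2\rho^{\hbox{\scriptsize nci}}_x+2\rho^{\hbox{\scriptsize ci}}_x$, so that $\delta(xw)-\delta(x)$ restricted to $H_x$ equals $\gamma^{\hbox{\scriptsize nci}}_{x,w}+\gamma^{\hbox{\scriptsize ci}}_{x,w}-\gamma_{x,w}$; your orbit-by-orbit computation arrives at the same conclusion but with more bookkeeping. For the sign identity, the paper partitions $\Rgot^+_{xw}\cap\theta(\Rgot^+_{xw})$ and $\Rgot^+_x\cap\theta(\Rgot^+_x)$ each into four pieces $A_{\pm\pm},B_{\pm\pm}$ according to signs relative to the other positive system, and manipulates cardinalities directly; your reduction to $\mathbf{1}_{a=b}+\mathbf{1}_{c=d}\equiv\mathbf{1}_{a=c}+\mathbf{1}_{b=d}\pmod 2$ on complex orbits is an equivalent but more conceptual repackaging of the same count.

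One minor imprecision: your decomposition $E_w=B_w\cup D_w\cup E_w^{\mathrm{cx}}$ omits the real roots ($\theta(\alpha)=-\alpha$) in $\Rgot^+_x\cap(-\Rgot^+_{xw})$, so $\rho_x-\rho_{xw}=\sum_{E_w}\alpha$ is not literally correct as a $G_x$-weight identity. Since real roots annihilate $\hgot_x$ this is harmless for the $H_x$-character comparison you actually need, but it is worth stating explicitly.
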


\begin{rem}
We can describe $Q_{\bar{x}}(\lambda)$ differently by taking $\{w_1,\cdots,w_p\}\subset W$ such that $W^H_x\backslash W\simeq \{\bar{w}_1,\cdots,\bar{w}_p\}$. We have 
$Q_{\bar{x}}(\lambda)={\rm Ind}^{H}_{H_{x}}\left( \tilde{\mathbb{A}}_{x}(\lambda)\right)$ with
$$
\tilde{\mathbb{A}}_{x}(\lambda)=\tilde{\mathbb{M}}_x(\lambda)\otimes\Cbb_{\delta(x)}\otimes\bigwedge 
\mathbb{E}^{\hbox{\scriptsize\rm  ci}}_{x}
$$
and where $\tilde{\mathbb{M}}_x(\lambda)\in \wR(H_x)$ is defined by the following expression
$$
\tilde{\mathbb{M}}_{x}(\lambda)=(-1)^{n_{x}}\,\sum_{k=1}^p
(-1)^{k_{x,w_k}}\ \Cbb_{(w_k\bullet\lambda)_{x}}\otimes \det(\left(\mathbb{E}^{\hbox{\scriptsize\rm nci}}_{x}\right)^+_{w_k})
\otimes {\rm Sym}(|\mathbb{E}^{\hbox{\scriptsize\rm nci}}_{x}|_{w_k}).
$$
\end{rem}

\medskip

We need to introduce some notations. To $x\in Z_\theta$, we associate~:

\begin{itemize}

\item The polarized roots : to $\alpha\in\Rgot_x$ and $w\in W$, we associate $|\alpha|_w\in\Rgot_x$ defined as follows
$$
|\alpha|_w=
\begin{cases}
\alpha \qquad \ \ {\rm if} \ \ \alpha \in \Rgot_{xw}^+,\\
-\alpha \qquad {\rm if} \ \ \alpha \notin \Rgot_{xw}^+.
\end{cases}
$$

\item The following $G_x$-weights :
$$
\gamma^{\hbox{\scriptsize \rm ci}}_{x,w}:=\sum_{\stackrel{\alpha\in \Rgot_x^{\hbox{\tiny \rm ci}}\cap \Rgot_x^+}{|\alpha|_w\neq\alpha}}\alpha, \quad
\gamma^{\hbox{\scriptsize \rm nci}}_{x,w}:=\sum_{\stackrel{\alpha\in \Rgot_x^{\hbox{\tiny \rm nci}}\cap \Rgot_x^+}{|\alpha|_w\neq\alpha}}\alpha, \quad 
\gamma_{x,w}:=\sum_{\stackrel{\alpha\in \Rgot_x^+}{|\alpha|_w\neq\alpha}}\alpha.
$$

\end{itemize}
\medskip

The proof of Proposition \ref{prop:formule-M-lambda} is based on the following Lemma. 

\begin{lem}\label{lem:formulation-M-lambda} Let $x\in Z_\theta$ and $w\in W$. Let $d^{\hbox{\scriptsize \rm ci}}_{x,w}$ be the cardinal of the set \break
$\{\alpha\in \Rgot_x^{\hbox{\scriptsize \rm ci}}\cap \Rgot_x^+ ,\ |\alpha|_w\neq\alpha\}$. We have the following relations
\begin{enumerate}
\item 
$
\mathbb{E}^{\hbox{\scriptsize \rm nci}}_{xw}\simeq \sum_{\alpha\in \Rgot_x^{\hbox{\tiny \rm nci}}\cap \Rgot_x^+}\Cbb_{|\alpha|_w}
\quad {\rm and} \quad 
\mathbb{E}^{\hbox{\scriptsize \rm ci}}_{xw}\simeq \sum_{\alpha\in \Rgot_x^{\hbox{\tiny \rm ci}}\cap \Rgot_x^+}\Cbb_{|\alpha|_w},
$

\item $\det(\mathbb{E}^{\hbox{\scriptsize \rm nci}}_{xw})=\Cbb_{-\gamma^{\hbox{\tiny \rm nci}}_{x,w}}\otimes \det(\left(\mathbb{E}^{\hbox{\scriptsize\rm nci}}_{x}\right)^+_w)$,

\item $\bigwedge \mathbb{E}^{\hbox{\scriptsize \rm ci}}_{xw}=
(-1)^{d^{\hbox{\tiny \rm ci}}_{x,w} }\,\Cbb_{-\gamma^{\hbox{\tiny \rm ci}}_{x,w}}\otimes 
\bigwedge \mathbb{E}^{\hbox{\scriptsize \rm ci}}_x$.
\item The $H_x$-weight $\delta(xw)-\delta(x)$ is equal to the restriction of  $G_x$-weight
$\gamma^{\hbox{\scriptsize \rm nci}}_{x,w}+\gamma^{\hbox{\scriptsize \rm ci}}_{x,w}- \gamma_{x,w}$ to $H_x$.

\end{enumerate}

\end{lem}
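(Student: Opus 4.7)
The plan is to prove the four identities in sequence, exploiting one structural observation and one computational identity. Since $w\in W=N(T)/T$ satisfies $wTw^{-1}=T$, we have $G_{xw}=G_x$, so $\Rgot_{xw}=\Rgot_x$ as a set of roots carrying the same action of $\theta$; only the notion of positivity changes. The map $\alpha\mapsto|\alpha|_w$ is a bijection $\Rgot_x^+\to\Rgot_{xw}^+$, and because being compact imaginary, non-compact imaginary, or complex depends only on the $\theta$-action on $\ggot_x$, this bijection restricts to bijections between the corresponding subsets. The second observation, used repeatedly in Part~4, is that $\theta$ acts trivially on $H_x$, so $\alpha|_{H_x}=\theta(\alpha)|_{H_x}$ for every root.

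Granted the bijection, Part~1 is immediate. For Part~2, I would partition $\Rgot_x^{\hbox{\scriptsize \rm nci}}\cap\Rgot_x^+$ into $A_w=\{\alpha:|\alpha|_w=\alpha\}$ and $B_w=\{\alpha:|\alpha|_w=-\alpha\}$; then
$\det(\mathbb{E}^{\hbox{\scriptsize \rm nci}}_{xw})=\Cbb_{\sum_{A_w}\alpha-\sum_{B_w}\alpha}=\Cbb_{-\gamma^{\hbox{\scriptsize \rm nci}}_{x,w}}\otimes\det\bigl((\mathbb{E}^{\hbox{\scriptsize \rm nci}}_x)^+_w\bigr)$, using that $\gamma^{\hbox{\scriptsize \rm nci}}_{x,w}=\sum_{B_w}\alpha$. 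Part~3 follows similarly, from the formal identity $\bigwedge\Cbb_\beta=1\ominus\Cbb_\beta$ combined with the elementary manipulation $1-\Cbb_{-\alpha}=-\Cbb_{-\alpha}(1-\Cbb_{\alpha})$ applied to each $\alpha$ in the analogous subset $B_w^{\hbox{\scriptsize \rm ci}}\subset\Rgot_x^{\hbox{\scriptsize \rm ci}}\cap\Rgot_x^+$ of cardinality $d^{\hbox{\scriptsize \rm ci}}_{x,w}$; this produces both the overall sign $(-1)^{d^{\hbox{\scriptsize \rm ci}}_{x,w}}$ and the weight shift by $-\gamma^{\hbox{\scriptsize \rm ci}}_{x,w}$.

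Part~4 is the main obstacle. Splitting $\Rgot_x^+$ by root type gives $\gamma_{x,w}=\gamma^{\hbox{\scriptsize \rm ci}}_{x,w}+\gamma^{\hbox{\scriptsize \rm nci}}_{x,w}+\sum_{\alpha\in\Rgot_x^+,\,\theta(\alpha)\neq\alpha,\,|\alpha|_w\neq\alpha}\alpha$, so the claim reduces to
\begin{equation*}
\bigl(\delta(xw)-\delta(x)\bigr)\big|_{H_x}
=-\sum_{\stackrel{\alpha\in\Rgot_x^+,\,\theta(\alpha)\neq\alpha}{|\alpha|_w\neq\alpha}}\alpha\big|_{H_x}.
\end{equation*}
I would prove this by expanding $2\delta(y)=\sum_{\alpha\in\Rgot_y^+\cap\theta(\Rgot_y^+),\,\theta(\alpha)\neq\alpha}\alpha$ for $y\in\{x,xw\}$, partitioning the complex roots of $\Rgot_x$ into $\theta$-orbits $\{\alpha,\theta(\alpha)\}$, and running through the eight sub-cases determined by whether both or only one of $\alpha,\theta(\alpha)$ lies in $\Rgot_x^+$, combined with the analogous positivity pattern in $\Rgot_{xw}^+$. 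In each sub-case, one computes the contributions of the orbit to $2\delta(xw)-2\delta(x)$ and to the right-hand side separately, and checks, using $\theta(\alpha)|_{H_x}=\alpha|_{H_x}$, that after restriction to $H_x$ the two agree. The enumeration is elementary but requires a careful accounting of signs and multiplicities in each case; this is where I expect the essential work to lie.
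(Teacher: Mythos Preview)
Your treatment of Parts 1--3 is correct and essentially the same as the paper's: both rest on the observation that $G_{xw}=G_x$, hence $\Rgot_{xw}=\Rgot_x$ with the same $\theta$-action, so only the positive system changes via the bijection $\alpha\mapsto|\alpha|_w$; the formal identities for $\det$ and $\bigwedge$ follow at once.

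For Part 4 your case-by-case approach is valid, but the paper takes a shorter and more conceptual route that avoids enumeration entirely. Writing $\rho_y=\tfrac12\sum_{\alpha\in\Rgot_y^+}\alpha$ and $\rho^{\hbox{\scriptsize\rm ci}}_y,\rho^{\hbox{\scriptsize\rm nci}}_y$ for the imaginary half-sums, the paper uses the identity
\[
\rho_y+\theta(\rho_y)=\sum_{\alpha\in\Rgot_y^+\cap\theta(\Rgot_y^+)}\alpha
=2\delta(y)+2\rho^{\hbox{\scriptsize\rm nci}}_y+2\rho^{\hbox{\scriptsize\rm ci}}_y
\qquad(y\in\{x,xw\}).
\]
Subtracting the two instances and restricting to $H_x$ (where $\theta(\xi)|_{H_x}=\xi|_{H_x}$, exactly the observation you isolate) gives
\[
\delta(xw)-\delta(x)\big|_{H_x}
=\bigl((\rho_{xw}-\rho_x)+(\rho^{\hbox{\scriptsize\rm nci}}_x-\rho^{\hbox{\scriptsize\rm nci}}_{xw})
+(\rho^{\hbox{\scriptsize\rm ci}}_x-\rho^{\hbox{\scriptsize\rm ci}}_{xw})\bigr)\big|_{H_x},
\]
and the standard computations $\rho_x-\rho_{xw}=\gamma_{x,w}$, $\rho^{\hbox{\scriptsize\rm nci}}_x-\rho^{\hbox{\scriptsize\rm nci}}_{xw}=\gamma^{\hbox{\scriptsize\rm nci}}_{x,w}$, $\rho^{\hbox{\scriptsize\rm ci}}_x-\rho^{\hbox{\scriptsize\rm ci}}_{xw}=\gamma^{\hbox{\scriptsize\rm ci}}_{x,w}$ finish the proof in one line. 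Your eight-case check will of course give the same answer, but if you carry it out you should note explicitly that real roots ($\theta(\alpha)=-\alpha$) contribute nothing to either side: they never appear in $\delta$, and they restrict to zero on $H_x$.
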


\begin{proof} 
We remark that $\Rgot_x=\Rgot_{xw}$, $\Rgot_x^+=g(\Rgot^+)$ and $\Rgot_{xw}^+=g(w\Rgot^+)$. The first point follows and points (ii) and (iii) derive from the first. 

Let us check the last point. The term $\rho_x:=\frac{1}{2}\sum_{\alpha\in\Rgot_x^+}\alpha$ is the image of 
$\rho:=\frac{1}{2}\sum_{\alpha'\in\Rgot^+}\alpha'$ through the map $\mu\mapsto\mu_x$. We see that 
$$
\rho_x+\theta(\rho_x)=\sum_{\alpha\in\Rgot_x^+\cap\theta(\Rgot_x^+)}\alpha=2\delta(x)+2\rho^{\hbox{\scriptsize \rm nci}}_x+2\rho^{\hbox{\scriptsize \rm ci}}_x
$$
where $\rho^{\hbox{\scriptsize \rm nci}}_x=\frac{1}{2}\sum_{\alpha\in\Rgot^{\hbox{\tiny \rm nci}}_x\cap\Rgot_x^+}\alpha$ and $\rho^{\hbox{\scriptsize \rm ci}}_x=
\frac{1}{2}\sum_{\alpha\in\Rgot^{\hbox{\tiny \rm ci}}_x\cap\Rgot_x^+}\alpha$. Similarly we have 
$$
\rho_{xw}+\theta(\rho_{xw})=2\delta(xw)+2\rho^{\hbox{\scriptsize \rm nci}}_{xw}+2\rho^{\hbox{\scriptsize \rm ci}}_{xw}.
$$

Thus the $H_x$-weight $\delta(xw)-\delta(x)$ is equal to the restriction to $H_x$ of the $G_x$-weight
$$
\beta(x,w):= \rho_{xw}-\rho_{x}+ (\rho^{\hbox{\scriptsize \rm nci}}_{x}-\rho^{\hbox{\scriptsize \rm nci}}_{xw})+
(\rho^{\hbox{\scriptsize \rm ci}}_{x}-\rho^{\hbox{\scriptsize \rm ci}}_{xw}).
$$
We notice that $ \rho_{xw}-\rho_{x}= (w\rho-\rho)_x= -\gamma_{x,w}$. Furthermore, small computations give that $\rho^{\hbox{\scriptsize \rm nci}}_{x}-\rho^{\hbox{\scriptsize \rm nci}}_{xw}= \gamma^{\hbox{\scriptsize \rm nci}}_{x,w}$ and 
$\rho^{\hbox{\scriptsize \rm ci}}_{x}-\rho^{\hbox{\scriptsize \rm ci}}_{xw}= \gamma^{\hbox{\scriptsize \rm ci}}_{x,w}$. We have proved that 
$\beta(x,w)= \gamma^{\hbox{\scriptsize \rm nci}}_{x,w}+\gamma^{\hbox{\scriptsize \rm ci}}_{x,w}- \gamma_{x,w}$. The last point follows.  
\end{proof}

\bigskip

Now, we can finish the proof of the Proposition \ref{prop:formule-M-lambda}.  We must check that the virtual $H_x$-module
$$
{\bf A}:=(-1)^{m_{xw}} \Cbb_{\lambda_{xw}+\delta(xw)}\otimes\det(\mathbb{E}^{\hbox{\scriptsize\rm nci}}_{xw})
\otimes \bigwedge \mathbb{E}^{\hbox{\scriptsize\rm  ci}}_{xw}
$$
is equal to the virtual $H_x$-module
$$
{\bf B}:=(-1)^{n_x+k_{x,w}}\ \Cbb_{(w\bullet\lambda)_{x}+\delta(x)}\otimes \det(\left(\mathbb{E}^{\hbox{\scriptsize\rm nci}}_{x}\right)^+_w)\otimes \bigwedge \mathbb{E}^{\hbox{\scriptsize\rm  ci}}_{x}.
$$

If we use Lemma \ref{lem:formulation-M-lambda}, we get
$$
{\bf A}=
(-1)^{m_{xw}+d^{\hbox{\tiny \rm ci}}_{x,w}} \Cbb_{(w(\lambda +\rho)-\rho)_x +\delta(x)}\otimes \det(\left(\mathbb{E}^{\hbox{\scriptsize\rm nci}}_{x}\right)^+_w)\otimes \bigwedge \mathbb{E}^{\hbox{\scriptsize\rm  ci}}_{x}.
$$
Thus the equality ${\bf A}={\bf B}$ follows from the following lemma.

\begin{lem}
For any $x\in Z_\theta$ and $w\in W$, we have $n_x+k_{x,w}=m_{xw}+d^{\hbox{\tiny \rm ci}}_{x,w}\ {\rm mod}\ 2$.
\end{lem}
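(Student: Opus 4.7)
Since $\Rgot_{xw}=\Rgot_x$ as root systems of the common Cartan $\ggot_x=\ggot_{xw}$, and since the $\theta$-type of a root (ci, nci, real with $\theta(\alpha)=-\alpha$, or complex with $\theta(\alpha)\neq\pm\alpha$) depends only on $\theta$ and not on the choice of positive cone, the two sides of the identity involve the same root system with two different positive cones $\Rgot_x^+$ and $\Rgot_{xw}^+$. The plan is to rewrite each of the four integers modulo $2$ as a sum of type-indexed contributions, observe that the imaginary and ci-bookkeeping parts cancel across LHS and RHS, and reduce the claim to a combinatorial identity on complex $\theta$-orbits that can be checked orbit-by-orbit.

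Three elementary observations drive the rewriting. (a) For $\alpha\in R^+$ real, $\theta(\alpha)=-\alpha\notin R^+$, so real roots never contribute to $R^+\cap\theta(R^+)$. (b) Imaginary roots are $\theta$-fixed, hence every imaginary positive root lies in $R^+\cap\theta(R^+)$, and only complex roots survive the restriction $\{\theta(\alpha)\neq\alpha\}$ inside that intersection. (c) For any $\theta$-stable, negation-stable subset $S\subset\Rgot_x$, the cardinality $|S\cap R^+|=|S|/2$ is independent of the positive cone. In particular $\dim\mathbb{E}^{\hbox{\scriptsize\rm nci}}_{xw}=|\Rgot_x^{\hbox{\scriptsize\rm nci}}\cap\Rgot_x^+|$, and the complex contribution to $R^+\cap\theta(R^+)$ is always even (complex positive roots whose $\theta$-image is positive come in $\theta$-pairs $\{\alpha,\theta(\alpha)\}$).

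Setting $p:=\tfrac12|\Rgot_x^{\hbox{\scriptsize\rm c}}\cap\Rgot_x^+\cap\theta(\Rgot_x^+)|$ and $p_w:=\tfrac12|\Rgot_x^{\hbox{\scriptsize\rm c}}\cap\Rgot_{xw}^+\cap\theta(\Rgot_{xw}^+)|$, the four invariants unfold modulo $2$ as
\begin{align*}
n_x &\equiv |\Rgot_x^{\hbox{\scriptsize\rm ci}}\cap\Rgot_x^+|+|\Rgot_x^{\hbox{\scriptsize\rm nci}}\cap\Rgot_x^+|+p,\\
m_{xw} &\equiv |\Rgot_x^{\hbox{\scriptsize\rm nci}}\cap\Rgot_x^+|+p_w,\\
k_{x,w} &\equiv |\Rgot_x^{\hbox{\scriptsize\rm c}}\cap\Rgot_x^+\cap\Rgot_{xw}^+|+|\Rgot_x^{\hbox{\scriptsize\rm ci}}\cap\Rgot_x^+\cap\Rgot_{xw}^+|,\\
d^{\hbox{\scriptsize\rm ci}}_{x,w} &\equiv |\Rgot_x^{\hbox{\scriptsize\rm ci}}\cap\Rgot_x^+|+|\Rgot_x^{\hbox{\scriptsize\rm ci}}\cap\Rgot_x^+\cap\Rgot_{xw}^+|.
\end{align*}
After substitution, all the ci/nci terms cancel in pairs between LHS and RHS, and the desired congruence reduces to
$$p+p_w+|\Rgot_x^{\hbox{\scriptsize\rm c}}\cap\Rgot_x^+\cap\Rgot_{xw}^+|\equiv 0\pmod 2.$$

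The final step, which is the only place where an actual calculation is needed, is an orbit-by-orbit verification of this reduced identity. Each complex $\theta$-orbit has the form $O=\{\pm\alpha,\pm\theta(\alpha)\}$, with exactly two elements in any positive cone. Encoding the two cones by bits $\epsilon_\beta=\mathbf{1}_{\beta\in\Rgot_x^+}$ and $\epsilon'_\beta=\mathbf{1}_{\beta\in\Rgot_{xw}^+}$, a direct enumeration shows that, modulo $2$, $\tfrac12|O\cap\Rgot_x^+\cap\theta(\Rgot_x^+)|\equiv 1+\epsilon_\alpha+\epsilon_{\theta(\alpha)}$, the analogous formula holds for $p_w$ with $\epsilon'$, and $|O\cap\Rgot_x^+\cap\Rgot_{xw}^+|\equiv\epsilon_\alpha+\epsilon'_\alpha+\epsilon_{\theta(\alpha)}+\epsilon'_{\theta(\alpha)}$. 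Adding the three contributions yields $2+2(\epsilon_\alpha+\epsilon'_\alpha+\epsilon_{\theta(\alpha)}+\epsilon'_{\theta(\alpha)})\equiv 0$, and summing over the complex orbits completes the argument. The only real obstacle is the parity bookkeeping in the rewriting step; once imaginary and complex contributions are separated, the residual identity is clean and local to each $\theta$-orbit.
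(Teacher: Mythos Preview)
Your proof is correct. The rewriting of each of the four quantities modulo $2$ is accurate (I checked each line), the cancellation of the ci/nci terms is clean, and the orbit-by-orbit verification of the residual identity
\[
p+p_w+|\Rgot_x^{\hbox{\scriptsize\rm c}}\cap\Rgot_x^+\cap\Rgot_{xw}^+|\equiv 0\pmod 2
\]
on each complex $\theta$-orbit $\{\pm\alpha,\pm\theta(\alpha)\}$ goes through exactly as you say.

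Your route differs from the paper's. The paper does not separate by $\theta$-type at the outset; instead it computes $m_{xw}-m_x$ directly by partitioning $\Rgot_{xw}^+\cap\theta(\Rgot_{xw}^+)$ and $\Rgot_x^+\cap\theta(\Rgot_x^+)$ into four pieces each (according to the signs relative to the \emph{other} positive cone), exploits the symmetries $A_{++}=B_{++}$, $A_{--}=-B_{--}$, $\theta(A_{+-})=A_{-+}$ to reduce to $|A_{+-}|+|B_{+-}|$, and then gradually peels off the imaginary contributions. Your approach front-loads the key structural observation that the $\theta$-type of a root is cone-independent, which lets you strip away the ci/nci bookkeeping in one step and localize the remaining parity check to individual complex orbits. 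This is shorter and more transparent; the paper's argument is a longer chain of set-identities but avoids the explicit $\epsilon/\epsilon'$ encoding. Either way the substance is the same combinatorics, just organized differently.
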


\begin{proof} In order to simplify our notations, we write $a\equiv b$ for $a=b \ {\rm mod}\ 2$. 

We have $\dim \mathbb{E}^{\hbox{\scriptsize\rm nci}}_{x}= \dim \mathbb{E}^{\hbox{\scriptsize\rm nci}}_{xw}$ and 
$\dim \mathbb{E}^{\hbox{\scriptsize\rm ci}}_{x}= \dim \mathbb{E}^{\hbox{\scriptsize\rm ci}}_{xw}$, then
\begin{eqnarray*}
m_{xw}-m_x&=& \frac{1}{2}\left(| \Rgot_{xw}^{+}\cap\theta(\Rgot_{xw}^{+})\cap\{\theta(\alpha)\neq\alpha\}| - 
| \Rgot_{x}^{+}\cap\theta(\Rgot_{x}^{+})\cap\{\theta(\alpha)\neq\alpha\}|\right)\\
&=& \frac{1}{2}\left(| \Rgot_{xw}^{+}\cap\theta(\Rgot_{xw}^{+})| - 
| \Rgot_{x}^{+}\cap\theta(\Rgot_{x}^{+})|\right).
\end{eqnarray*}
We remark now that 
$$
\Rgot_{xw}^{+}\cap\theta(\Rgot_{xw}^{+})= A_{++}\cup A_{--}\cup A_{+-}\cup A_{-+}
$$
with $A_{++}=\Rgot_{x}^{+}\cap\theta(\Rgot_{x}^{+})\cap \Rgot_{xw}^{+}\cap\theta(\Rgot_{xw}^{+})$, 
$A_{--}=-\Rgot_{x}^{+}\cap-\theta(\Rgot_{x}^{+})\cap \Rgot_{xw}^{+}\cap\theta(\Rgot_{xw}^{+})$,
$A_{+-}=\Rgot_{x}^{+}\cap\theta(-\Rgot_{x}^{+})\cap \Rgot_{xw}^{+}\cap\theta(\Rgot_{xw}^{+})$ and 
$A_{-+}=-\Rgot_{x}^{+}\cap\theta(\Rgot_{x}^{+})\cap \Rgot_{xw}^{+}\cap\theta(\Rgot_{xw}^{+})$.

Similarly we have 
$$
\Rgot_{x}^{+}\cap\theta(\Rgot_{x}^{+})= B_{++}\cup B_{--}\cup B_{+-}\cup B_{-+}
$$
with $B_{++}=\Rgot_{x}^{+}\cap\theta(\Rgot_{x}^{+})\cap \Rgot_{xw}^{+}\cap\theta(\Rgot_{xw}^{+})$, 
$B_{--}=\Rgot_{x}^{+}\cap\theta(\Rgot_{x}^{+})\cap -\Rgot_{xw}^{+}\cap\theta(-\Rgot_{xw}^{+})$,
$B_{+-}=\Rgot_{x}^{+}\cap\theta(\Rgot_{x}^{+})\cap \Rgot_{xw}^{+}\cap\theta(-\Rgot_{xw}^{+})$ and 
$B_{-+}=\Rgot_{x}^{+}\cap\theta(\Rgot_{x}^{+})\cap -\Rgot_{xw}^{+}\cap\theta(\Rgot_{xw}^{+})$.

We have the obvious relations : $A_{++}=B_{++}$, $A_{--}=-B_{--}$, $\theta(A_{+-})=A_{-+}$, $\theta(B_{+-})=B_{-+}$ and 
$A_{++}=B_{++}$. So we get $m_{xw}-m_x\equiv |A_{+-}| +|B_{+-}|$.

Let consider $\Acal:=\Rgot_{x}^{+}\cap \Rgot_{xw}^{+}$ and $\Bcal:= \Rgot_{x}^{+}\cap -\Rgot_{xw}^{+}$. We have
\begin{eqnarray*}
m_{xw}-m_x &\equiv& |\Acal\cap \theta(\Bcal)|+ |\Acal\cap -\theta(\Bcal)| \\
&\equiv& |\Acal | +  |\Acal\cap \theta(\Acal)|+ |\Acal\cap -\theta(\Acal)|.
\end{eqnarray*}
Now we remark that 
\begin{eqnarray*}
|\Acal\cap \theta(\Acal)| &\equiv& |\Acal\cap \theta(\Acal)\cap\{\theta(\alpha)= \alpha\}| \\
&\equiv& |\Rgot_{x}^{+}\cap \Rgot_{xw}^{+}\cap\{\theta(\alpha)= \alpha\}|.
\end{eqnarray*}
Similarly 
\begin{eqnarray*}
|\Acal\cap -\theta(\Acal)| &\equiv& |\Acal\cap -\theta(\Acal)\cap\{\theta(\alpha)= -\alpha\}|\\
&\equiv& |\Rgot_{x}^{+}\cap \Rgot_{xw}^{+}\cap\{\theta(\alpha)= -\alpha\}| .  
\end{eqnarray*}

At this stage we have proved that
\begin{eqnarray*}
m_{xw}-m_x& \equiv & | \Rgot_{x}^{+}\cap \Rgot_{xw}^{+}| + 
|\Rgot_{x}^{+}\cap \Rgot_{xw}^{+}\cap\{\theta(\alpha)= \alpha\}|  +
|\Rgot_{x}^{+}\cap \Rgot_{xw}^{+}\cap\{\theta(\alpha)= -\alpha\}|\\
&\equiv& |\Rgot_{x}^{+}\cap \Rgot_{xw}^{+}\cap\{\theta(\alpha)\neq -\alpha\}| + |\Rgot_{x}^{+}\cap \Rgot_{xw}^{+}\cap\{\theta(\alpha)= \alpha\}|.
\end{eqnarray*}

As $d^{\hbox{\scriptsize \rm ci}}_{x,w}= |\Rgot_{x}^{+}\cap -\Rgot_{xw}^{+}\cap \Rgot_x^{\hbox{\scriptsize ci}}|$, we have 
$|\Rgot_{x}^{+}\cap \Rgot_{xw}^{+}\cap\{\theta(\alpha)= \alpha\}|+ d^{\hbox{\scriptsize \rm ci}}_{x,w}$ is equal to 
$\dim \mathbb{E}^{\hbox{\scriptsize ci}}_x+ |\Rgot_{x}^{+}\cap \Rgot_{xw}^{+}\cap \Rgot_x^{\hbox{\scriptsize nci}}|$. This implies that 
$m_{xw}+ d^{\hbox{\scriptsize \rm ci}}_{x,w}$ is equal, modulo $2$, to 
$$
m_x + \dim \mathbb{E}^{\hbox{\scriptsize ci}}_x+ |\Rgot_{x}^{+}\cap \Rgot_{xw}^{+}\cap \Rgot_x^{\hbox{\scriptsize nci}}|
+ |\Rgot_{x}^{+}\cap \Rgot_{xw}^{+}\cap\{\theta(\alpha)\neq -\alpha\}|
$$
$$
\equiv m_x + \dim \mathbb{E}^{\hbox{\scriptsize ci}}_x+ |\Rgot_{x}^{+}\cap \Rgot_{xw}^{+}\cap \Rgot_x^{\hbox{\scriptsize ci}}|
+ |\Rgot_{x}^{+}\cap \Rgot_{xw}^{+}\cap\{\theta(\alpha)\neq \pm \alpha\}|.
$$
By definition $m_x=\frac{1}{2}| \Rgot_{x}^{+}\cap\theta(\Rgot_{x}^{+})\cap\{\theta(\alpha)\neq\alpha\}| + \dim \mathbb{E}^{\hbox{\scriptsize\rm nci}}_{x}$ and then
\begin{eqnarray*}
m_x + \dim \mathbb{E}^{\hbox{\scriptsize ci}}_x
&\equiv& \frac{1}{2}| \Rgot_{x}^{+}\cap\theta(\Rgot_{x}^{+})\cap\{\theta(\alpha)\neq\alpha\}|+ 
|\Rgot_{x}^{+}\cap \{\theta(\alpha)=\alpha\}|\\
&\equiv& n_x.
\end{eqnarray*}

Finally we have proved that $m_{xw}+ d^{\hbox{\scriptsize \rm ci}}_{x,w}$ is equal, modulo $2$, to $n_x+k_{x,w}$.

\end{proof}

\section{Examples}

In this section we will study in details some examples of our formula 
$$
V_{\lambda}^{G}\vert_{H}=\sum_{\bar{x}\,\in\, H\backslash Z_{\theta}\slash W} Q_{\bar{x}}(\lambda)
$$
where $Q_{\bar{x}}(\lambda)={\rm Ind}^{H}_{H_{x}}\left(\mathbb{M}_x(\lambda)\otimes\Cbb_{\delta(x)} \otimes \bigwedge 
\mathbb{E}^{\hbox{\scriptsize\rm  ci}}_{x}\right)$ and
$$
\mathbb{M}_{x}(\lambda)=
\frac{(-1)^{n_x}}{|W^H_x|}\sum_{w\in W}(-1)^{k_{x,w}}\ \Cbb_{(w\bullet\lambda)_{x}}\otimes \det(\left(\mathbb{E}^{\hbox{\scriptsize\rm nci}}_{x}\right)^+_w)
\otimes {\rm Sym}(|\mathbb{E}^{\hbox{\scriptsize\rm nci}}_{x}|_w).
$$

Here the integers $k_{x,w}$ and $n_x$ are defined as follows:
\begin{itemize}
\item $k_{x,w}= |\Rgot^{+}_x\cap \Rgot^{+}_{xw}\cap\{\theta(\alpha)\neq \pm \alpha\}| +|\Rgot^{+}_x\cap \Rgot^{+}_{xw}\cap \Rgot_x^{\hbox{\scriptsize \rm ci}} |$,
\item $n_x=| \theta(\Rgot^{+}_x)\cap \Rgot^{+}_x| - \frac{1}{2}| \theta(\Rgot^{+}_x)\cap \Rgot^{+}_x\cap\{\theta(\alpha)\neq\alpha\}|$.
\end{itemize}

\subsection{$K\subset K\times K$}

Let $K$ be a connected compact Lie group. Here we work with the Lie group $G=K\times K$ and the involution $\theta(k_1,k_2)=(k_2,k_1)$. 
The subgroup $H=G^\theta$ is the group $K$ embedded diagonally in $G$. 

Let $T$ be a maximal torus of $K$ and let $W_K=N_K(T)/T$ be the Weyl group of $K$. We denote by $\Rgot_K$ the set of roots for $(K,T)$, and we make the choice of a set $\Rgot_K^+$ of positive roots.

In the next lemma we describe the critical set $Z_\theta$ in the flag manifold $\Fcal=K/T\times K/T$ of $G$.

\begin{lem}
We have $Z_\theta= \bigcup_{w\in W_K} Z_w$ with $Z_w=K\cdot(wT,T)$. In other words, the set $H\backslash Z_{\theta}\slash W$ is a singleton.
\end{lem}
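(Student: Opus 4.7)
The plan is to apply the defining criterion recalled in Lemma 2.2: a point $x = g\,T_G \in \Fcal$ lies in $Z_\theta$ if and only if $g^{-1}\theta(g) \in N_G(T_G)$, where $T_G = T\times T$ is the chosen maximal torus of $G = K\times K$. Writing $g = (a,b)$, we have $\theta(g) = (b,a)$, hence
\[
g^{-1}\theta(g) = (a^{-1}b,\, b^{-1}a).
\]
Since the second component is the inverse of the first, and since $N_G(T_G) = N_K(T)\times N_K(T)$, the condition reduces to the single requirement $a^{-1}b \in N_K(T)$.

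Step 1 (inclusion $Z_\theta \subset \bigcup_w Z_w$). Suppose $(aT, bT) \in Z_\theta$, and let $n = a^{-1}b \in N_K(T)$, representing $w^{-1} \in W_K$. Then $(aT, bT) = (aT, anT)$, and applying the diagonal element $b^{-1} \in K = H$ yields
\[
b^{-1}\cdot(aT, anT) = (b^{-1}aT,\, T) = (n^{-1}T, T) = (wT, T) \in Z_w.
\]
Step 2 (reverse inclusion). For any $w \in W_K$, choose a lift $n_w \in N_K(T)$ and take $g = (n_w, e)$. Then $g^{-1}\theta(g) = (n_w^{-1}, n_w)$ lies in $N_K(T)\times N_K(T)$, so $(wT, T) \in Z_\theta$. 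Since $Z_\theta$ is $H$-stable, all of $Z_w = K\cdot(wT,T)$ is contained in $Z_\theta$.

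Step 3 (singleton quotient). The Weyl group of $G$ is $W = W_K \times W_K$, acting on $\Fcal$ on the right by $(aT, bT)\cdot(w_1, w_2) = (aw_1 T, bw_2 T)$. Taking $(w_1, w_2) = (w^{-1}, e)$ sends $(wT, T)$ to $(T, T)$, so every $Z_w$ lies in the $H\times W$-orbit of $(T,T)$. Hence $H\backslash Z_\theta/W$ has a single element.

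There is no real obstacle here; the only point to handle carefully is keeping the left diagonal $H$-action and the right $W$-action separate, and making sure the passage between $n \in N_K(T)$ and its class $w \in W_K$ is done on the correct side (so that $(n^{-1}T, T) = (wT, T)$ rather than $(w^{-1}T,T)$).
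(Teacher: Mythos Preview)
Your proof is correct and follows the same approach as the paper: use the criterion $g^{-1}\theta(g)\in N_G(T_G)$, write $g=(a,b)$, and reduce to $b^{-1}a\in N_K(T)$ (the paper phrases it directly as ``$b^{-1}a=w$'' rather than naming $n=a^{-1}b$). Your write-up is actually more complete than the paper's, which only sketches the inclusion $Z_\theta\subset\bigcup_w Z_w$ and leaves the reverse inclusion and the singleton claim implicit; your Steps~2 and~3 make these explicit.
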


\begin{proof}
The element $x=(aT,bT)\in \Fcal$ belongs to $Z_\theta$ if and only if $=(a^{-1}b,b^{-1}a)\in W\times W$. If $b^{-1}a=w\in W$ then 
$(aT,bT)\in Z_w$.  
\end{proof}

\medskip

We take $x=(T,T)\in Z_\theta$. For each $w\in W_K$, we write $xw=(wT,T)$. We take 
$\lambda=(a,b)\in \Lambda_K^+\times\Lambda^+_K=\widehat{G}$.

Our data are as follows:

$\bullet$ the group $G_{x}$ is the maximal torus $T\times T\subset K$,

$\bullet$ the group $H_{x}$ is the maximal torus $T\subset K$,

$\bullet$ $\Cbb_{(w\bullet\lambda)_x+\delta(x)}=\Cbb_{w(a+\rho)+b+\rho}$ as a character of $T$,

$\bullet$ $n_{x}=|\Rgot_K^+|$,

$\bullet$ $k_{x,w}$ is equal to $|w\Rgot_K^+\cap\Rgot_K^+|+ |\Rgot_K^+|$, so $(-1)^{k_{x,w}}=(-1)^w$,

$\bullet$ the vector spaces $\mathbb{E}^{\hbox{\scriptsize\rm ci}}_{x},\mathbb{E}^{\hbox{\scriptsize\rm nci}}_{x}$ are reduced to $\{0\}$.

\medskip

In this context we obtain the following relation
\begin{equation}\label{eq:general-Clebsch-Gordan}
V^K_a\otimes V_{b}^K=(-1)^{\dim(K/T)/2}\sum_{w\in W_K}(-1)^w\, \ind_T^K\left(\Cbb_{w(a+\rho)+b+\rho} \right).
\end{equation}
This type of generalized Clebsch-Gordan formula what first noticed by Steinberg 
\cite{Steinberg-61} (see Section \ref{sec:Kostant}).

\medskip

\begin{example}\label{SU(2)-Clebsch-Gordan}
The irreducible representation $SU(2)$ are parametrized by $\Nbb$. If $n\geq 0$, the irreducible representation 
$V_n$ of $SU(2)$ satisfies 
$$
V_n=\ind_{U(1)}^{SU(2)}((\Cbb_0-\Cbb_2)\otimes\Cbb_n).
$$
If we take $m\geq n\geq 0$, then (\ref{eq:general-Clebsch-Gordan}) gives
\begin{eqnarray*}
V_n\otimes V_m&=& \ind_{U(1)}^{SU(2)}(\Cbb_{m-n}) - \ind_{U(1)}^{SU(2)}(\Cbb_{m+n+2})\\
&=& \sum_{k=0}^n\ind_{U(1)}^{SU(2)}((\Cbb_0-\Cbb_2)\otimes\Cbb_{m+n-2k})\\
&=& \sum_{k=0}^n \, V_{m+n-2k}.
\end{eqnarray*}
We recognize here the classical Clebsch-Gordan relations.
\end{example}

\subsection{$U(p)\times U(q)\subset U(p+q)$}

Let $p\geq q\geq 1$ and $n=p+q$. We take $G=U(n)$ with maximal torus $T\simeq U(1)^{n}$ the subgroup formed by the diagonal matrices. 
We use the canonical map $\tau$ from the symmetric group $\mathfrak{S}_n$ into $G$. It induces an isomorphism between $\mathfrak{S}_n$ and the 
Weyl group $W$ of $G$.

We work with the involution $\theta(g)=\Delta g \Delta^{-1}$ where $\Delta:={\rm diag}(I_p,-I_q)$: the subgroup fixed by $\theta$ is $H=U(p)\times U(q)$. 

In the next section we describe the critical set $Z_\theta \subset \Fcal$. For another type of parametrization of 
$H_\Cbb\backslash \Fcal$, see Section 5 of \cite{Richardson-Springer93}.

\subsubsection{The  critical set}

We consider the following elements of $O(2)$:
$$ 
R= \left(\begin{array}{cc} 
\tfrac{1}{\sqrt{2}}& \tfrac{1}{\sqrt{2}}\\ 
\tfrac{-1}{\sqrt{2}} & \frac{1}{\sqrt{2}}
\end{array}
\right),
\quad
S=\left(\begin{array}{cc} 0& 1\\ 1& 0\end{array}\right),
\quad 
J= \left(\begin{array}{cc} 
0& -1\\ 
1 &0
\end{array}
\right).
$$

The element $R$ is of order $8$, $R^2=-J$ and 
$R^{-1}\left(\begin{array}{cc} 1& 0\\ 0& -1\end{array}\right)R=S$.

To any $j\in\{0,\ldots,q\}$ we associate :
\begin{itemize}
\item $g_j:={\rm diag}(\underbrace{1,\ldots,1}_{p-j\ {\rm times}},\underbrace{R,\ldots,R}_{j\ {\rm times}},\underbrace{1,\ldots,1}_{q-j\ {\rm times}})\in G$,

\item the permutation $w_j\in \mathfrak{S}_n$ that fixes the elements of $[1,\cdots, p-j]$\break $\cup[p+j+1,\cdots,n]$ and such that
$$
w_j(p-j+2k-1)=p-j+k,\quad w_j(p-j+2k)=p+k,\quad {\rm for}\quad 1\leq k\leq j,
$$

\item $k_j= \tau_j g_j\in G$, where $\tau_j=\tau(w_j)\in N(T)$,

\item $x_j=k_j T\in \Fcal$.
\end{itemize}

The adjoint map $Ad(\tau_j):G\to G$ sends the matrix 
${\rm diag}(a_1,\ldots,a_{p-j},b_1,\ldots,b_{2j},c_1,\ldots,c_{q-j})$ to the matrix
${\rm diag}(a_1,\ldots,a_{p-j},b_1,b_3,\ldots,b_{2j-1},b_2,b_4,\ldots,b_{2j},c_1,\ldots,c_{q-j})$.
 
We see then that 
$$
\sigma_j:=k_j^{-1}\Delta k_j=
{\rm diag}(\underbrace{1,\ldots,1}_{p-j\ {\rm times}},\underbrace{S,\ldots,S}_{j\ {\rm times}},\underbrace{-1,\ldots,-1}_{q-j\ {\rm times}})
$$ 
and $k_j^{-1}\theta(k_j)=\sigma_j \Delta$ belong to $N(T)$. Thus the elements $x_0,\ldots,x_q$ belongs to $Z_\theta$.

\begin{lem}
In the flag manifold $\Fcal$ the set $Z_\theta$ has the following description:
$$
Z_\theta=\bigcup_{0\leq j\leq q} \ \bigcup_{\bar{w}\in W_{x_j}\backslash W} H x_j w
$$
So we have  $H\backslash Z_{\theta}\slash W=\{\bar{x}_0,\ldots,\bar{x}_q\}$.
\end{lem}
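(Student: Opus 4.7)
My plan is to translate the classification of $H\times W$-orbits on $Z_\theta$ into the classification of $N(T)$-conjugacy classes of involutions in $N(T)=T\rtimes\mathfrak{S}_n$ that are $G$-conjugate to $\Delta$, and then enumerate those classes by hand.

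First I would set $\sigma(g):=g^{-1}\Delta g$. Since $\Delta\in T$, the condition $g^{-1}\theta(g)\in N(T)$ is equivalent to $\sigma(g)\in N(T)$; moreover $\sigma(g)$ is an involution with eigenvalues $(+1)^p(-1)^q$. Left multiplication of $g$ by $h\in H$ leaves $\sigma(g)$ unchanged (since $H$ centralizes $\Delta$), while right multiplication by a lift $n\in N(T)$ of $w\in W$ conjugates $\sigma(g)$ by $n$. A short verification (injectivity from $n^{-1}\sigma(g_1)n=\sigma(g_2)\Rightarrow g_2(g_1n)^{-1}\in H$; surjectivity because any such $\sigma$ is $g^{-1}\Delta g$ for some $g$) gives a bijection
$$
H\backslash Z_\theta/W \;\longleftrightarrow\; \bigl\{\sigma\in N(T):\sigma^2=1,\ \sigma\sim_G\Delta\bigr\}\big/N(T).
$$

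Next I would classify the right-hand set. Writing $\sigma=tw$ with $t\in T$ and $w\in\mathfrak{S}_n$, the involution condition forces $w$ to be a product of, say, $k$ disjoint transpositions, $t_i=\pm 1$ at fixed points of $w$, and $t_it_{w(i)}=1$ on transposed pairs. Each transposition block contributes a pair of eigenvalues $(+1,-1)$, and each fixed point $i$ contributes $t_i$. Matching the eigenvalue type $(+1)^p(-1)^q$ forces exactly $p-k$ fixed points with $t_i=+1$ and $q-k$ with $t_i=-1$, so $0\le k\le q$. A direct computation shows that $T$-conjugation sends $(t,w)$ to $(t'tw(t')^{-1},w)$ and acts freely on the entries of $t$ at transposed indices while fixing those at fixed-point indices. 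After normalizing $t_i=1$ on each transposed pair, the residual $\mathfrak{S}_n$-conjugation acts transitively on the choice of which $k$ pairs are transpositions and which $n-2k$ positions carry $\pm 1$. Hence there are exactly $q+1$ classes, parametrized by $k\in\{0,1,\ldots,q\}$.

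Finally I would match the $x_j$ with these classes: the text has already computed $\sigma(k_j)={\rm diag}(1^{p-j},S^j,(-1)^{q-j})$, which in the $T\rtimes\mathfrak{S}_n$ picture is $t_jw_j$ where $w_j$ has exactly $j$ transpositions (the $S$-blocks) with trivial $t$-entries there, plus $p-j$ fixed points at value $+1$ and $q-j$ at $-1$. Thus $x_j$ represents the class $k=j$, and the $q+1$ points $x_0,\ldots,x_q$ exhaust $H\backslash Z_\theta/W$, giving the asserted decomposition. The main obstacle, I expect, is the index bookkeeping in the $T$-and-$W$-conjugation step: checking that $T$-conjugation precisely normalizes the transposition entries of $t$ and that the residual $\mathfrak{S}_n$-action is transitive on the remaining discrete data. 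Neither is deep, but both require care.
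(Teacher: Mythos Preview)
Your proposal is correct and follows essentially the same route as the paper: both translate membership in $Z_\theta$ to the condition $\sigma(g)=g^{-1}\Delta g\in N(T)$, and then classify such involutions up to $N(T)$-conjugacy via their Weyl-group cycle type together with the eigenvalue constraint $(+1)^p(-1)^q$. Your write-up is more explicit (formalizing the bijection and the $T$-normalization step), whereas the paper asserts the existence of the normalizing $n\in N(T)$ directly; the underlying argument is the same.
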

\begin{proof} If $1\leq a<b\leq n$, we denote by $\tau_{a,b}\in N(T)$ the permutation matrix associated to the transposition $(a,b)$.

Let $gT\in Z_{\theta}$. Then $k:=g^{-1}\theta(g)\Delta=g^{-1}\Delta g$ is an element of order two in $N(T)$. 
The Weyl group element $\bar{k}\in W$ is of order two, 
then there exists $0\leq l\leq n/2$, and a family $(a_1<b_1),\ldots,(a_l<b_l)$ of disjoint couples in $\{1,\ldots,n\}$  such that 
$k T= \tau_{a_1,b_1}\ldots \tau_{a_l,b_l}T$. 

Now, if we use the fact that the characteristic polynomial of $k\in G$ is equal to $(X-1)^p(X+1)^q$ with  $p\geq q\geq 1$, we see that 

$\bullet $ $l\leq q$,

$\bullet$ there exists $n\in N(T)$ such that $nkn^{-1}= \sigma_l=k_l^{-1} \Delta k_l$.

If we take $w=\bar{n}\in W$, the previous identity says that $g\in Hk_l w T$.  
\end{proof}

\medskip

\subsubsection{Localized indices}

We work with the groups $T\subset H=U(p)\times U(q)\subset G=U(n)$ and the corresponding Lie algebras 
$\tgot\subset \hgot\subset \ggot$. Let $\Rgot=\{\varepsilon_r-\varepsilon_s\}$ be the set of non-zero roots for 
the action of $T$ on $\ggot\otimes\Cbb$. 
We choose the Weyl chamber so that $\Rgot^+:=\{\varepsilon_r-\varepsilon_s,\ 1\leq r<s\leq n\}$.

Let $j\in\{0,\ldots,q\}$. The aim of this section is to compute the localized index $Q_{\bar{x}_j}(\lambda)\in \wR(H)$. 
In order to have a fairly simple expression we will rewrite the terms of the form 
${\rm Ind}^{H}_{H_{x_j}}(\Cbb_{\beta}\otimes \bigwedge 
\mathbb{E}^{\hbox{\scriptsize\rm  ci}}_{x_j})$.

Let $\{1,\ldots, n\}=I^1_j\cup I^2_j\cup I^3_j\cup I^4_j$ where 
$I^1_j=\{1\leq k\leq p-j\}$, $I^2_j=\{p-j+1\leq k\leq p\}$,
$I^3_j=\{p+1\leq k\leq p+j\}$, and $I^4_j=\{p+j+1\leq k\leq n\}$.

For the maximal torus $T\subset G$ we have a decomposition
$$
T\simeq T^1_{j}\times T^2_{j}\times T^3_{j}\times T^4_{j}
$$ 
where $T^p_{j}=\{(t_k)_{k=1}^n, t_k\in U(1), t_k=1\ {\rm unless}\ k\in I^p_j\}$. Let $T_{j} \subset T^2_{j}\times T^3_{j}$ be the subtorus defined by the relations: an element 
$((t_k)_{k=1}^n,(s_k)_{k=1}^n) \in T_j^2\times T_j^3$ belongs to $T_j$ 
if and only if $t_{p-j+k}=s_{p+k}$ for all $1\leq k\leq j$.

\medskip

The elements of order two $\sigma_j\in G$ induce involutions on $G$ (by conjugation) that we still denote by $\sigma_j$. We start with a basic lemma whose proof is left to the reader.

\begin{lem} \label{lem:calcul_x_j}
Let $x_j=k_j T\in \Fcal$.

$\bullet$ The adjoint map $Ad(k_j):\ggot\to\ggot$ realizes an isomorphism between the vector space $\tgot$ equipped with the involution induced by $\sigma_j$ and  
the vector space $\ggot_{x_j}$ equipped with the involution $\theta$. 

$\bullet$ The group $N(T)^{\sigma_j}/T^{\sigma_j}$ is isomorphic with 
$\mathfrak{S}_{p-j}\times\mathfrak{S}_{q-j}\times\mathfrak{S}_{j}\times \{\pm\}^j$.

$\bullet$ The adjoint map $Ad(k_j):G\to G$ induces an isomorphism $N(T)^{\sigma_j}/T^{\sigma_j}$ $\simeq W_{x_j}$.

$\bullet$ The stabilizer subgroup $H_{x_j}$ is equal to $T^1_{j}\times T_{j}\times T_{j}^4 \subset T$.

$\bullet$ If $\Cbb_\alpha$ is a character of $T$, then $\Cbb_{k_j\alpha}$ is a character of $G_{x_j}$ and $\Cbb_{\tau_j\alpha}$ is  a character of 
$T$. We have the relation
$$
\Cbb_{k_j\alpha}\vert_{H_{x_j}}=\Cbb_{\tau_j\alpha}\vert_{H_{x_j}}.
$$

$\bullet$ The set of roots $\Rgot_{x_j}^{\hbox{\scriptsize \rm ci}}$ is equal to 
$$
 k_j\cdot\left\{\varepsilon_r-\varepsilon_s, 1\leq r<s\leq p-j\right\}
\bigcup k_j\cdot\left\{\varepsilon_r-\varepsilon_s, p+j+1\leq r<s\leq n\right\}
$$
and $\Rgot_{x_j}^{\hbox{\scriptsize \rm nci}}= k_j\cdot\left\{\varepsilon_r-\varepsilon_s, 1\leq r\leq p-j\ \&\ p+j+1\leq s\leq n \right\}$.

\end{lem}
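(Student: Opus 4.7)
The whole lemma will fall out of one master intertwining identity,
$$
\theta\circ\Ad(k_j)\;=\;\Ad(k_j)\circ\Ad(\sigma_j)\qquad\text{on }\ggot,
$$
applied alongside the explicit signed-permutation description of $\sigma_j$. To prove the identity I would start from $\theta(\Ad(k_j)X)=\Ad(\theta(k_j))\theta(X)$, plug in the already-established relation $\theta(k_j)=k_j\sigma_j\Delta$, and use $\theta=\Ad(\Delta)$ together with $\Delta^2=1$; the two $\Delta$'s cancel, leaving $\Ad(k_j)\Ad(\sigma_j)X$. The first bullet is then immediate because $\ggot_{x_j}=\Ad(k_j)\tgot$.

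For the second bullet I would observe that $\sigma_j$ acts on $W=\mathfrak{S}_n$ by conjugation through its image $\pi_j:=\prod_{i=1}^{j}(p-j+2i-1,\,p-j+2i)$ — the $-1$ blocks of $\sigma_j$ sit inside $T$ and are invisible modulo $T$. The centralizer of a product of $j$ disjoint transpositions with fixed-point set $I^1_j\cup I^4_j$ is the classical product $\mathfrak{S}_{p-j}\times\mathfrak{S}_{q-j}\times(\mathfrak{S}_j\ltimes\{\pm\}^j)$, giving the second bullet set-theoretically. For the third bullet I would verify the algebraic identity $k_j^{-1}Hk_j=G^{\sigma_j}$ directly: writing $\theta(k_jgk_j^{-1})=k_jgk_j^{-1}$ and substituting $\theta(g)=\Delta g\Delta^{-1}$ and $\theta(k_j)=k_j\sigma_j\Delta$ reduces (using $\Delta^2=1$) to $\sigma_jg\sigma_j^{-1}=g$. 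With this in hand, $w=nT\in W^H_{x_j}$ iff $n\in G^{\sigma_j}T$, iff some $T$-translate of $n$ lies in $N(T)^{\sigma_j}$; the kernel of the resulting surjection $N(T)^{\sigma_j}\twoheadrightarrow W^H_{x_j}$ is precisely $T^{\sigma_j}$.

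The fourth bullet follows by specializing to the torus: $H_{x_j}=H\cap G_{x_j}=\Ad(k_j)(T^{\sigma_j})$, where $T^{\sigma_j}$ consists of diagonals with $t_{p-j+2i-1}=t_{p-j+2i}$ for $1\leq i\leq j$ and the remaining coordinates free. On such a $t$ the $R$-blocks inside $g_j$ act trivially because $R\,\mathrm{diag}(s,s)\,R^{-1}=sI$, so $\Ad(k_j)t=\Ad(\tau_j)t\in T$; the permutation $w_j$ then carries the paired values from consecutive positions $(p-j+2i-1,\,p-j+2i)$ to the split positions $(p-j+i,\,p+i)$, producing exactly $T^1_j\times T_j\times T^4_j$. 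The fifth bullet is a one-line check: for $t\in T^{\sigma_j}$ and $h=\Ad(k_j)t=\Ad(\tau_j)t\in T$, both $\Cbb_{k_j\alpha}(h)$ and $\Cbb_{\tau_j\alpha}(h)$ evaluate to $\Cbb_\alpha(t)$ by the definition of the transported weights.

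Finally, for the sixth bullet I would read off the imaginary roots from the master identity: $k_j\alpha$ with $\alpha=\varepsilon_r-\varepsilon_s$ is imaginary iff $\sigma_j\alpha=\alpha$ on $\tgot^*$, iff both $r,s$ lie in the $\sigma_j$-fixed index set $I^1_j\cup I^4_j$. To split compact from non-compact I would compute $\theta(\Ad(k_j)E_{r,s})=\Ad(k_j)\Ad(\sigma_j)E_{r,s}=\epsilon(r)\epsilon(s)\,\Ad(k_j)E_{r,s}$, with $\epsilon(r)=+1$ on $I^1_j\cup I^2_j\cup I^3_j$ and $\epsilon(r)=-1$ on $I^4_j$; compactness corresponds to $\epsilon(r)\epsilon(s)=+1$, i.e.\ both indices in $I^1_j$ or both in $I^4_j$, and non-compactness to the indices being split across $I^1_j$ and $I^4_j$. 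The main obstacle throughout is keeping the two pieces of $\sigma_j$ (the permutation $S$-blocks in positions $I^2_j\cup I^3_j$ and the diagonal $-1$'s on $I^4_j$) straight, and making sure the correct intertwining with $\theta=\Ad(\Delta)$ is applied at each step; everything reconciles because of the clean factorization $\theta(k_j)=k_j\sigma_j\Delta$.
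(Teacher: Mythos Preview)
The paper leaves this lemma to the reader, so there is no proof to compare against; I evaluate your argument on its own merits. Your master intertwining identity $\theta\circ\Ad(k_j)=\Ad(k_j)\circ\Ad(\sigma_j)$ is correct and drives the first, third, fourth, fifth, and sixth bullets cleanly.

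There is, however, a genuine gap in the second bullet. You argue that since the $-1$ entries of $\sigma_j$ on $I^4_j$ lie in $T$, the quotient $N(T)^{\sigma_j}/T^{\sigma_j}$ coincides with the centralizer of the permutation $\pi_j$ in $W=\mathfrak{S}_n$, and you then assert that this centralizer equals $\mathfrak{S}_{p-j}\times\mathfrak{S}_{q-j}\times(\mathfrak{S}_j\ltimes\{\pm\}^j)$. Both steps fail. The centralizer of $\pi_j$ in $\mathfrak{S}_n$ permutes \emph{all} of its fixed points freely, so it is $\mathfrak{S}_{(p-j)+(q-j)}\times(\mathfrak{S}_j\ltimes\{\pm\}^j)$, strictly larger than the group in the statement whenever $p>j$ and $q>j$. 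And the natural map $N(T)^{\sigma_j}/T^{\sigma_j}\hookrightarrow W^{\sigma_j}$ is only an injection: if $n\in N(T)$ is a monomial matrix lifting the transposition $(a,b)$ with $a\in I^1_j$, $b\in I^4_j$, then $(\sigma_j n\sigma_j^{-1})_{ab}=(+1)\,n_{ab}\,(-1)=-n_{ab}$, so no lift is $\sigma_j$-fixed. It is exactly the diagonal signs you dismissed as ``invisible modulo $T$'' that force the underlying permutation of any $n\in N(T)^{\sigma_j}$ to preserve $I^1_j$ and $I^4_j$ separately, cutting $\mathfrak{S}_{(p-j)+(q-j)}$ down to $\mathfrak{S}_{p-j}\times\mathfrak{S}_{q-j}$. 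The repair is short---work directly with monomial matrices and track how the signs $\pm 1$ on $I^1_j$ versus $I^4_j$ constrain the permutation---but the reasoning as written does not establish the claim.
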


We denote by $\mathbb{M}_j$ the $T$-module $\Cbb^{p-j}\otimes (\Cbb^{q-j})^*$ where the subgroup $T^2_{j}\times T^3_{j}$ acts trivially and 
the $T^1_{j}\times T^4_{j}$-action is the canonical one. Thanks to Lemma \ref{lem:calcul_x_j}, we have the following isomorphisms of $H_{x_j}$-modules:
$\mathbb{E}^{\hbox{\scriptsize \rm nci}}_{x_j}\simeq \mathbb{M}_j$. Following Lemma \ref{lem:polarized-module}, one can associate the modules 
$(\mathbb{M}_j)^\pm_w$ and $\vert\mathbb{M}_j\vert_w$ to each $w\in W$.

We consider the Lie group 
$$
K_j:=U(p-j)\times U(q-j)
$$
that we view as a subgroup of $H$ in such a way that $T^1_{j}\times T^4_{j}$ is a maximal torus of $K_j$. 
A set of positive roots for $(K_j,T^1_{j}\times T^4_{j})$ is 
$\varepsilon_r-\varepsilon_s$ for $1\leq r<s\leq p-j$ and $p+j+1\leq r<s\leq n $. We equip $\kgot_j\slash\left[\tgot^1_j\times \tgot^4_j\right]$ 
with a complex structure 
such that 
$$
\mathbb{E}^{\hbox{\scriptsize \rm ci}}_{x_j}\simeq \kgot_j\slash 
\left[\tgot^1_j\times \tgot^4_j\right]
$$
is an isomorphism of $T_j^1\times T_j^4$-modules.

The holomorphic induction map ${\rm Hol}^{K_j}_{T^1_{j}\times T^4_{j}}: \wR(T^1_{j}\times T^4_{j})\to \wR(K_j)$ is defined as follows:
$$
{\rm Hol}^{K_j}_{T^1_{j}\times T^4_{j}}(V):={\rm Ind}^{K_j}_{T^1_{j}\times T^4_{j}}(V\otimes \bigwedge\kgot_j\slash\left[\tgot^1_j\times \tgot^4_j\right]).
$$
If $a=(a_1\geq \cdots\geq a_{p-j})\in \Zbb^{p-j}$ and 
$b= (b_1\geq \cdots\geq b_{q-j})\in \Zbb^{q-j}$, then $\Cbb_{(a,b)}$ defines a character of $T^1_{j}\times T^4_{j}$ and 
$$
{\rm Hol}^{K_j}_{T^1_{j}\times T^4_{j}}\left(\Cbb_{(a,b)}\right)= V_a^{U(p-j)}\otimes V_b^{U(q-j)}
$$
is the irreducible representation of $K_j$ with highest weight $(a,b)$.

A character $\Cbb_{\beta}$ of the torus $T$ can be written 
$\Cbb_{\beta}= \Cbb_{\beta^{14}}\otimes \Cbb_{\beta^{23}}$ where $\Cbb_{\beta^{14}}$ is a character of 
$T^1_j\times T_j^4$ and $\Cbb_{\beta^{23}}$ is a character of $T^2_j\times T_j^3$. Note that  
$\Cbb_{\tau_j\beta}\vert_{H_{x_j}}= \Cbb_{\beta^{14}}\otimes \Cbb_{\beta'}$ where 
$\beta'=\tau_j\beta^{23}$ defines a character of $T_j\subset T^2_j\times T_j^3$.

\begin{lem}
Let $\Cbb_{\beta}$ be a character of 
$T$. Then ${\rm Ind}^{H}_{H_{x_j}}(\Cbb_{\beta}\vert_{H_{x_j}}\otimes \bigwedge 
\mathbb{E}^{\hbox{\scriptsize\rm  ci}}_{x_j})$ is equal to 
$$
{\rm Ind}^{H}_{K_j\times T^2_{j}\times T^3_{j}}\left( {\rm Hol}^{K_j}_{T^1_{j}\times T^4_{j}}\left(\Cbb_{\beta^{14}}\right)\otimes \Cbb_{\beta^{23}}\otimes 
L^2(\left[T^2_{j}\times T^3_{j}\right]/T_j)\right),
$$
where $L^2(\left[T^2_{j}\times T^3_{j}\right]/T_j)={\rm Ind}^{T^2_{j}\times T^3_{j}}_{T_j}(1)\in \widehat{R}(T^2_{j}\times T^3_{j})$.
\end{lem}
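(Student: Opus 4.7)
The plan is to compute the induction $\mathrm{Ind}^H_{H_{x_j}}$ in stages, using the intermediate subgroup $K_j \times T^2_j \times T^3_j$. Explicitly, I would write
$$
\mathrm{Ind}^H_{H_{x_j}} = \mathrm{Ind}^H_{K_j \times T^2_j \times T^3_j} \circ \mathrm{Ind}^{K_j \times T^2_j \times T^3_j}_{K_j \times T_j} \circ \mathrm{Ind}^{K_j \times T_j}_{T^1_j \times T_j \times T^4_j},
$$
since $H_{x_j} = T^1_j \times T_j \times T^4_j$ by Lemma \ref{lem:calcul_x_j}, and $T^1_j \times T^4_j$ is a maximal torus of $K_j$. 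It suffices to apply each induction in turn to $\Cbb_\beta|_{H_{x_j}} \otimes \bigwedge \mathbb{E}^{\hbox{\scriptsize\rm ci}}_{x_j}$.

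First I would decompose $\Cbb_\beta|_{H_{x_j}} \simeq \Cbb_{\beta^{14}} \otimes \Cbb_{\bar{\beta}^{23}}$, where $\Cbb_{\bar{\beta}^{23}}$ is the restriction of $\Cbb_{\beta^{23}}$ from $T^2_j \times T^3_j$ to $T_j$. The $H_{x_j}$-module $\mathbb{E}^{\hbox{\scriptsize\rm ci}}_{x_j} \simeq \kgot_j/(\tgot^1_j \times \tgot^4_j)$ carries a trivial $T_j$-action (as already observed, using that the compact imaginary roots involve only the indices in $I^1_j \cup I^4_j$, which are $T_j$-neutral). Consequently the first induction produces
$$
\mathrm{Ind}^{K_j \times T_j}_{T^1_j \times T_j \times T^4_j}\bigl(\Cbb_{\beta^{14}} \otimes \Cbb_{\bar{\beta}^{23}} \otimes {\textstyle \bigwedge} \kgot_j/(\tgot^1_j \times \tgot^4_j)\bigr) = \mathrm{Hol}^{K_j}_{T^1_j \times T^4_j}(\Cbb_{\beta^{14}}) \otimes \Cbb_{\bar{\beta}^{23}},
$$
by the very definition of $\mathrm{Hol}^{K_j}_{T^1_j \times T^4_j}$.

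Next, the induction from $K_j \times T_j$ to $K_j \times T^2_j \times T^3_j$ acts only on the torus factor, so it reduces to computing $\mathrm{Ind}^{T^2_j \times T^3_j}_{T_j}(\Cbb_{\bar{\beta}^{23}})$. The key fact here — and the only step that requires more than a bookkeeping argument — is the projection formula for tori: since $\Cbb_{\bar{\beta}^{23}}$ admits the extension $\Cbb_{\beta^{23}}$ to $T^2_j \times T^3_j$, one has
$$
\mathrm{Ind}^{T^2_j \times T^3_j}_{T_j}(\Cbb_{\bar{\beta}^{23}}) = \Cbb_{\beta^{23}} \otimes \mathrm{Ind}^{T^2_j \times T^3_j}_{T_j}(1) = \Cbb_{\beta^{23}} \otimes L^2(T^2_j \times T^3_j / T_j).
$$
Here I would just check that the tensor product and induction commute when one factor extends to the ambient torus — a standard Frobenius reciprocity computation. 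Applying the final induction $\mathrm{Ind}^H_{K_j \times T^2_j \times T^3_j}$ then yields the stated formula.

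The main conceptual obstacle is the initial identification of $\mathbb{E}^{\hbox{\scriptsize\rm ci}}_{x_j}$ as an $H_{x_j}$-module with complex structure matching $\kgot_j/(\tgot^1_j \times \tgot^4_j)$, which has already been arranged in the paragraph preceding the lemma. Beyond this, the proof is a straightforward chain of inductions in stages, requiring only the projection formula and the definition of $\mathrm{Hol}^{K_j}_{T^1_j \times T^4_j}$.
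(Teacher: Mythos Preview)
Your argument is correct. The paper states this lemma without proof, so there is no original argument to compare against; your induction-in-stages approach via the chain $H_{x_j}=T^1_j\times T_j\times T^4_j\subset K_j\times T_j\subset K_j\times T^2_j\times T^3_j\subset H$, together with the projection formula for the torus step, is exactly the natural way to verify it and all the identifications you invoke are supplied by Lemma~\ref{lem:calcul_x_j} and the paragraph preceding the statement.
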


\begin{rem}To gain some space in our formulas, we will write ${\rm Hol}^{K_j}_{T^1_{j}\times T^4_{j}}\left(\Cbb_{\beta}\right)$ instead of 
${\rm Hol}^{K_j}_{T^1_{j}\times T^4_{j}}\left(\Cbb_{\beta^{14}}\right)\otimes \Cbb_{\beta^{23}}$
\end{rem}

We need to fix some notations.

\begin{defi} 
$\bullet$ Let $\chi: H\to\Cbb$ be the character $(A,B)\mapsto \det(A)\det(B)^{-1}$. 

$\bullet$ Let $\psi_j$ be the character\footnote{Remark that $\psi_j$ is trivial $T^1_{j}\times T^3_{j}\times T^4_{j}$.}  of $T$ associated to the weight 
$$
\sum_{1\leq k\leq j} (q-p+2+2j-4k)\varepsilon_{p-j+k}.
$$

$\bullet$ For any $(j,w)\in [0,q]\times W$, we define the integer $d_{j,w}$ by the relation
$$
d_{j,w}=\dim (\mathbb{M}_j)_w^+ + | \{1\leq k\leq j,\ w^{-1}(p-j+2k-1)<w^{-1}(p-j+2k)\} |.
$$
\end{defi}

A small computation gives the following lemma.

\begin{lem}
\begin{itemize}
\item The $H_{x_j}$-character $\Cbb_{\delta(x_j)}$ is equal to 
$\chi^{\otimes j}\otimes \psi_j   \vert_{H_{x_j}}$.

\item For any $(j,w)\in [0,q]\times W$, we have 
$(-1)^{n_{x_j}+k_{x_j,w}}= (-1)^{j(n+1)}(-1)^w (-1)^{d_{j,w}}$.
\end{itemize}
\end{lem}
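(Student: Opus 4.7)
The plan is to deduce both assertions by direct computation using the explicit description of $\sigma_j := k_j^{-1}\Delta k_j$ acting on the root system $\Rgot\subset \tgot^*$. By Lemma \ref{lem:calcul_x_j}, the $\theta$-action on $\Rgot_{x_j}$ is transported via $\Ad(k_j)$ to the permutation induced by $\sigma_j$ on $\Rgot$, which fixes each index in $I^1_j\cup I^4_j$ and swaps $p-j+k\leftrightarrow p+k$ for $k\in[1,j]$. When restricted to $H_{x_j}\cong T^{\sigma_j}$, the $T$-characters $\varepsilon_{p-j+k}$ and $\varepsilon_{p+k}$ both descend to a common character $\eta_k$ of the subtorus $T_j$.

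For the first identity, I would enumerate the $\sigma_j$-orbits of size two inside $\Rgot^+\cap\sigma_j(\Rgot^+)$. Classifying a positive root $\alpha=\varepsilon_r-\varepsilon_s$ according to which of the blocks $I^i_j$ the indices $r,s$ belong to, the conditions $\sigma_j(r)<\sigma_j(s)$ together with $(r,s)$ not both fixed isolate three orbit families: $\{\varepsilon_r-\varepsilon_{p-j+k},\ \varepsilon_r-\varepsilon_{p+k}\}$ for $r\in I^1_j$; $\{\varepsilon_{p-j+k_r}-\varepsilon_{p-j+k_s},\ \varepsilon_{p+k_r}-\varepsilon_{p+k_s}\}$ for $1\le k_r<k_s\le j$; and $\{\varepsilon_{p-j+k}-\varepsilon_s,\ \varepsilon_{p+k}-\varepsilon_s\}$ for $s\in I^4_j$. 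Summing both elements of each orbit and restricting to $T^{\sigma_j}$ produces $2\delta(x_j)$; after dividing by two, the coefficients of $\varepsilon_r$ ($r\in I^1_j$) and $\varepsilon_s$ ($s\in I^4_j$) match the contribution of $j\chi|_{H_{x_j}}=j\bigl(\sum_{r\in I^1_j}\varepsilon_r-\sum_{s\in I^4_j}\varepsilon_s\bigr)$, while the coefficients of the $\eta_k$ combine across all three orbit families into $\psi_j|_{H_{x_j}}$.

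For the second identity, I would expand each of $n_{x_j}$, $k_{x_j,w}$, and $d_{j,w}$ from its definition into cardinalities of subsets of $\Rgot^+_{x_j}$ classified by the $\sigma_j$-type (imaginary, complex, or real) and by whether $w^{-1}$ preserves the pertinent ordering on indices. Using that $(-1)^w=(-1)^{\ell(w)}$ with $\ell(w)=|w\Rgot^+\cap(-\Rgot^+)|$, together with the explicit block form of $\sigma_j$, the required congruence $n_{x_j}+k_{x_j,w}\equiv j(n+1)+\ell(w)+d_{j,w}\pmod 2$ reduces to a computation modulo two. The main obstacle is the bookkeeping: the various subsets entering the three quantities overlap in subtle ways, and the factor $(-1)^{j(n+1)}$ emerges precisely from the $j$-fold $I^2_j$-$I^3_j$ pairing combined with the length-$n$ structure of the ambient torus.
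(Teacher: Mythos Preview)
The paper does not actually prove this lemma; it is introduced with ``A small computation gives the following lemma.'' Your plan to carry out a direct root--by--root computation is therefore precisely what the paper has in mind, and for the second identity your outline (expanding $n_{x_j}$, $k_{x_j,w}$, $d_{j,w}$ as cardinalities and reducing modulo~$2$) is sound.

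There is, however, a genuine gap in your treatment of the first identity. The permutation you describe --- swapping $p-j+k\leftrightarrow p+k$ --- is \emph{not} the one induced by $\sigma_j$. Since $\sigma_j=\mathrm{diag}(1,\ldots,1,S,\ldots,S,-1,\ldots,-1)$, it swaps the \emph{consecutive} pairs $p-j+2k-1\leftrightarrow p-j+2k$. What you have written is the conjugate $\tilde\sigma_j:=\tau_j\sigma_j\tau_j^{-1}$, which is the involution one sees only \emph{after} the reindexing by $w_j$, and whose fixed torus is indeed $H_{x_j}=T^1_j\times T_j\times T^4_j$. If you work with $\tilde\sigma_j$ and the direct inclusion $H_{x_j}\subset T$, then because $\Rgot_{x_j}^+=k_j\Rgot^+$ and $\Cbb_{k_j\beta}\vert_{H_{x_j}}=\Cbb_{\tau_j\beta}\vert_{H_{x_j}}$, the positive system you must intersect is $w_j\Rgot^+$, not $\Rgot^+$.

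This matters: your three orbit families correctly enumerate the complex $\tilde\sigma_j$-orbits in $\Rgot^+\cap\tilde\sigma_j(\Rgot^+)$, but they omit the ``mixed'' orbits
\[
\{\varepsilon_{p-j+k_r}-\varepsilon_{p+k_s},\ \varepsilon_{p+k_r}-\varepsilon_{p-j+k_s}\},\qquad 1\le k_r<k_s\le j,
\]
whose second member lies in $w_j\Rgot^+$ but not in $\Rgot^+$. Already for $p=3$, $q=j=2$ one checks that your half--sum restricts to $2\varepsilon_1-2\eta_2$ on $H_{x_2}$, whereas $\chi^{\otimes 2}\otimes\psi_2\vert_{H_{x_2}}$ gives $2\varepsilon_1+\eta_1-3\eta_2$; the discrepancy $\eta_1-\eta_2$ is exactly the contribution of the missing orbit $\{\varepsilon_2-\varepsilon_5,\ \varepsilon_4-\varepsilon_3\}$. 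Once you either (i) use $\Rgot^+$ with the true $\sigma_j$ and apply $\tau_j$ only at the restriction step, or (ii) use $\tilde\sigma_j$ together with $w_j\Rgot^+$, the enumeration is complete and the computation goes through as you outline.
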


The main result of this section is the following proposition.

\begin{prop}
$$
V_{\lambda}^{U(n)}\vert_{U(p)\times U(q)}=\sum_{j=0}^q Q_{\bar{x}_j}(\lambda)
$$
where $Q_{\bar{x}_j}(\lambda)\in \widehat{R}(U(p)\times U(q))$ is determined by the relation
$$
Q_{\bar{x}_j}(\lambda)=\frac{(-1)^{j(n+1)}}{|W_{x_j}|}\ \chi^{\otimes j} \otimes \sum_{w\in W}(-1)^w(-1)^{d_{j,w}} \ 
{\rm Ind}^{U(p)\times U(q)}_{K_j\times T^2_{j}\times T^3_{j}}\left(\mathbb{A}_{j}^w(\lambda)\otimes \psi_j\right). 
$$
Here the elements $\mathbb{A}_{j}^w(\lambda)\in \widehat{R}(K_j\times T^2_{j}\times T^3_{j})$ are defined as follows:
$$
\mathbb{A}_{j}^w(\lambda)={\rm Hol}^{K_j}_{T^1_{j}\times T^4_{j}}\left(\Cbb_{\tau_j(w\bullet\lambda)}
\otimes \det((\mathbb{M}_{j})^+_w)\otimes {\rm Sym}(|\mathbb{M}_{j}|_w)\right)\otimes L^2(\left[T^2_{j}\times T^3_{j}\right]/T_j).
$$
\end{prop}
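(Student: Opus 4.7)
The plan is to specialize Theorem~\ref{theo-principal-bis} together with Proposition~\ref{prop:formule-M-lambda} to our situation $G=U(n)$, $H=U(p)\times U(q)$, $\theta=\mathrm{Ad}(\Delta)$, and then to reroute the induction from $H_{x_j}=T^1_j\times T_j\times T^4_j$ through the intermediate subgroup $K_j\times T^2_j\times T^3_j\subset H$ using the induction-by-stages lemma stated just above. Since the lemma describing $Z_\theta$ gives $H\backslash Z_\theta\slash W=\{\bar{x}_0,\ldots,\bar{x}_q\}$, Theorem~\ref{theo-principal-bis} reduces to
$$
V_\lambda^{U(n)}\vert_{U(p)\times U(q)}=\sum_{j=0}^q Q_{\bar{x}_j}(\lambda),
$$
so it only remains to rewrite each $Q_{\bar{x}_j}(\lambda)$ in the claimed form.

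First I substitute into the formula for $\mathbb{M}_{x_j}(\lambda)$ in Proposition~\ref{prop:formule-M-lambda} the data collected in Lemma~\ref{lem:calcul_x_j} and the two subsequent lemmas: $\mathbb{E}^{\hbox{\scriptsize nci}}_{x_j}\simeq \mathbb{M}_j$ as an $H_{x_j}$-module (under which $(\mathbb{E}^{\hbox{\scriptsize nci}}_{x_j})^+_w$ corresponds to $(\mathbb{M}_j)^+_w$, and $|\mathbb{E}^{\hbox{\scriptsize nci}}_{x_j}|_w$ to $|\mathbb{M}_j|_w$); $\mathbb{E}^{\hbox{\scriptsize ci}}_{x_j}\simeq \kgot_j/(\tgot^1_j\times\tgot^4_j)$; $\Cbb_{(w\bullet\lambda)_{x_j}}\vert_{H_{x_j}}=\Cbb_{\tau_j(w\bullet\lambda)}\vert_{H_{x_j}}$; $\Cbb_{\delta(x_j)}=\chi^{\otimes j}\otimes\psi_j\vert_{H_{x_j}}$; and the sign identity $(-1)^{n_{x_j}+k_{x_j,w}}=(-1)^{j(n+1)}(-1)^w(-1)^{d_{j,w}}$. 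This puts $Q_{\bar{x}_j}(\lambda)$ in the form
$$
\frac{(-1)^{j(n+1)}}{|W_{x_j}|}\sum_{w\in W}(-1)^w(-1)^{d_{j,w}}\,{\rm Ind}^H_{H_{x_j}}\!\left(\Cbb_{\tau_j(w\bullet\lambda)}\otimes\det((\mathbb{M}_j)^+_w)\otimes{\rm Sym}(|\mathbb{M}_j|_w)\otimes\chi^{\otimes j}\otimes\psi_j\otimes\bigwedge\mathbb{E}^{\hbox{\scriptsize ci}}_{x_j}\right)\!,
$$
all modules being viewed as $H_{x_j}$-modules.

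Next I apply the induction-by-stages lemma. The character $\chi^{\otimes j}$ extends to a character of $H$, so by the projection formula it factors out of the induction. For the remaining tensor product, observe that $\mathbb{M}_j$ is the restriction to $H_{x_j}$ of a $T^1_j\times T^4_j$-module on which $T^2_j\times T^3_j$ acts trivially; the same holds for $\det((\mathbb{M}_j)^+_w)$ and ${\rm Sym}(|\mathbb{M}_j|_w)$. Splitting $\tau_j(w\bullet\lambda)=(\tau_j(w\bullet\lambda))^{14}+(\tau_j(w\bullet\lambda))^{23}$ and $\psi_j$ (which is supported on $T^2_j$) accordingly, the cited induction-by-stages lemma converts the remaining $H_{x_j}$-induction into an induction from $K_j\times T^2_j\times T^3_j$ in which the $T^1_j\times T^4_j$-pieces are absorbed into the holomorphic induction ${\rm Hol}^{K_j}_{T^1_j\times T^4_j}$, while the $T^2_j\times T^3_j$-pieces combine with $L^2(T^2_j\times T^3_j/T_j)$. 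This gives precisely the claimed expression for $Q_{\bar{x}_j}(\lambda)$ in terms of $\mathbb{A}_j^w(\lambda)$.

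The main obstacle is purely bookkeeping: one must carefully track the decomposition of each weight into its $T^1_j\times T^4_j$ and $T^2_j\times T^3_j$ components, verify that $\psi_j$ indeed has support only on $T^2_j$ so that it lands in the $\Cbb_{\beta^{23}}$ factor of the induction lemma, and check that the isomorphism $\mathbb{E}^{\hbox{\scriptsize ci}}_{x_j}\simeq\kgot_j/(\tgot^1_j\times\tgot^4_j)$ together with the chosen complex structure matches the one implicit in the definition of ${\rm Hol}^{K_j}_{T^1_j\times T^4_j}$. Once these compatibilities are checked, the final formula follows by one application of Theorem~\ref{theo-principal-bis}, one substitution of the tabulated data, and one use of induction by stages.
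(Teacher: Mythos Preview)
Your proposal is correct and follows essentially the same approach as the paper: the proposition is obtained by specializing Theorem~\ref{theo-principal-bis} and Proposition~\ref{prop:formule-M-lambda} using the structural data assembled in Lemma~\ref{lem:calcul_x_j} and the two lemmas immediately preceding the proposition, and then rerouting the induction through $K_j\times T^2_j\times T^3_j$ via the induction-by-stages lemma. The paper does not write out this assembly explicitly, but your outline matches the intended derivation, including the use of the projection formula to pull $\chi^{\otimes j}$ outside the induction and the observation that $\psi_j$ lives on $T^2_j$.
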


We finish this section by considering particular situations.

\subsubsection{The extreme cases : $j=0$ or $j=q$}

When $j=0$, the torus $T_0^2$ and $T_0^3$ are trivial and $K_0=U(p)\times U(q)=H$. Moreover 
$\mathbb{M}_0=\Cbb^{p}\otimes(\Cbb^q)^*$ and $d_{0,w}=\dim (\mathbb{M}_0)_w^+$. Thanks to Lemma \ref{lem:calcul_x_j}, we know also that 
$W_{x_0}\simeq \mathfrak{S}_p\times \mathfrak{S}_q$.

So we get the formula
$$
Q_{\bar{x}_0}(\lambda)=\frac{1}{p!q!}
\sum_{w\in W}(\pm)_w\ 
{\rm Hol}^{H}_{T}\left(\Cbb_{w\bullet\lambda}
\otimes \det((\mathbb{M}_{0})^+_w)\otimes {\rm Sym}(|\mathbb{M}_{0}|_w)\right)
$$
where $(\pm)_w= (-1)^w(-1)^{\dim (\mathbb{M}_0)_w^+}$.

\begin{rem}An useful exercise is to consider the term
$$\mathbb{A}_w:=(\pm)_w\ 
{\rm Hol}^{H}_{T}\left(\Cbb_{w\bullet\lambda}
\otimes \det((\mathbb{M}_{0})^+_w)\otimes {\rm Sym}(|\mathbb{M}_{0}|_w)\right)$$ and  verify that 
$\mathbb{A}_{w'w}=\mathbb{A}_w$ when $w'\in W_{x_0}$.
\end{rem}

\medskip

When $j=q$, the torus $T_q^4$ is trivial, $K_q=U(p-q)$ and $\mathbb{M}_q=\{0\}$. Moreover  $W_{x_q}\simeq \mathfrak{S}_{p-q}
\times \mathfrak{S}_q\times \{\pm\}^q$. In this case we obtain
$$
Q_{\bar{x}_q}(\lambda)= \frac{(-1)^{q(n+1)}}{(p-q)!q! 2^q}\ \chi^{\otimes q} \otimes\sum_{w\in W}(-1)^w(-1)^{d_{q,w}} \  Q_q^w(\lambda)
$$
with 
$$
Q_q^w(\lambda)={\rm Ind}^{U(p)\times U(q)}_{U(p-q)\times T^2_{q}\times T^3_{q}}
\left({\rm Hol}^{U(p-q)}_{T^1_{q}}\left(\Cbb_{\tau_q(w\bullet\lambda)}\right)\otimes\psi_q \otimes L^2(\left[T^2_{q}\times T^3_{q}\right]/T_q)\right). 
$$

\subsubsection{$U(n-1)\times U(1)\subset U(n)$}

Here we are in the case where $q=1$, and so 
$$
V_{\lambda}^{U(n)}\vert_{U(n-1)\times U(1)}=Q_{\bar{x}_0}(\lambda)+ Q_{\bar{x}_1}(\lambda).
$$

To simplify the expression of $Q_{\bar{x}_0}(\lambda)$ we use the fact that the quotient $W_{x_0}\backslash W$ is represented by the class of the 
elements $\tau_{k,n}\in G$ associated to the transposition $(k,n)$ for $1\leq k\leq n$. We write $T=T'\times U(1)$ where $T'$ is a maximal torus of $U(n-1)$. The $T'$-module 
$\Cbb^{n-1}$ can be decomposed as $\mathbb{V}_k\oplus\mathbb{V}'_k$ where $\mathbb{V}_k=\sum_{j=1}^{k-1}\Cbb_{\varepsilon_j}$ and 
$\mathbb{V}'_k=\sum_{j=k}^{n-1}\Cbb_{\varepsilon_j}$.

The $T$-module $\mathbb{M}_0$ is equal to $\Cbb^{n-1}\otimes \Cbb^*=\mathbb{V}_k\otimes \Cbb_{-\varepsilon_n}\oplus 
\mathbb{V}'_k\otimes \Cbb_{-\varepsilon_n}$ and the polarized $T$-module $|\mathbb{M}_{0}|_{\tau_{k,n}}$ is equal to 
$\mathbb{V}_k\otimes \Cbb_{-\varepsilon_n}\oplus 
\overline{\mathbb{V}'_k}\otimes \Cbb_{\varepsilon_n}$. 
We have $\dim (\mathbb{M}_{0})^+_{\tau_{k,n}}= k-1$ and $\det(\mathbb{M}_{0})^+_{\tau_{k,n}}= \Cbb_{\mu_k}\otimes \Cbb_{\varepsilon_n}^{\otimes 1-k}$ with 
$\mu_k=\sum_{j=1}^{k-1}\varepsilon_j$.

So we obtain
\begin{eqnarray*}
   \lefteqn{Q_{\bar{x}_0}(\lambda)=} \\
   &&\sum_{\stackrel{a,b\geq 0}{1\leq k\leq n}}\ (\pm)_k\ 
{\rm Hol}^{U(n-1)}_{T'}\left(\Cbb_{\tau_{k,n}\bullet\lambda+\mu_k}\otimes {\rm Sym}^b(\mathbb{V}_k)\otimes {\rm Sym}^a(\overline{\mathbb{V}'_k})
\right)\otimes \Cbb_{\varepsilon_n}^{\otimes 1+a-b-k},\nonumber
\end{eqnarray*}
where $(\pm)_k=(-1)^k$ if $k<n$ and $(\pm)_n=(-1)^{n-1}$.

We consider now the term $Q_{\bar{x}_1}(\lambda)$. When $j=q=1$, the torus $T_1^4$ is trivial, $K_1=U(n-2)$ and $\mathbb{M}_1=\{0\}$. Moreover  
$W_{x_1}\simeq \mathfrak{S}_{n-2}\times \{\pm\}$, $\tau_1=Id$ and $\psi_1=(2-n)\varepsilon_{n-1}$. Here the quotient $W_{x_1}\backslash W$ is 
represented by the class of the elements $\tau_{l,n}\tau_{k,n-1}$ for $1\leq k<l\leq n$. We denote by $\lambda_{kl}$ the term $\tau_{l,n}\tau_{k,n-1}\bullet\lambda$.

In this case we obtain
$$
Q_{\bar{x}_1}(\lambda)=(-1)^{n}\ \chi\,  \otimes\left( Q_{\bar{x}_1}^{n-1,n}(\lambda)-\sum_{1\leq k< n-1} Q_{\bar{x}_1}^{k,n}(\lambda)+
\sum_{1\leq k<l\leq n-1}  Q_{\bar{x}_1}^{k,l}(\lambda)\right)
$$
with 
\begin{eqnarray*}
Q_{\bar{x}_1}^{k,l}(\lambda)&=&{\rm Ind}^{U(n-1)\times T^3_{1}}_{U(n-2)\times T^2_{1}
\times T^3_{1}}
\left({\rm Hol}^{U(n-2)}_{T^1_{1}}\left(\Cbb_{\lambda_{kl}}\right)\otimes\psi_1 \otimes L^2(\left[T^2_{1}\times T^3_{1}\right]/T_1)\right)\\
&=& \sum_{a\in\Zbb} {\rm Ind}^{U(n-1)}_{U(n-2)\times T^2_{1}}
\left({\rm Hol}^{U(n-2)}_{T^1_{1}}\left(\Cbb_{\lambda_{kl}}\right) \otimes 
\Cbb_{\varepsilon_{n-1}}^{\otimes a}\right)
\otimes \Cbb_{\varepsilon_{n}}^{\otimes 2-n-a}.
\end{eqnarray*}

\medskip

Let us finish this section by considering the simplest example: $U(1)\times U(1)\subset U(2)$. Take $\lambda=(\lambda_1\geq \lambda_2)\in\widehat{U(2)}$. We have $V_{\lambda}^{U(2)}\vert_{U(1)\times U(1)}=Q_{\bar{x}_0}(\lambda)+ Q_{\bar{x}_1}(\lambda)$ where 
$$
Q_{\bar{x}_0}(\lambda)=-\ \Cbb_\lambda\otimes \sum_{-\infty}^{\lambda_2-\lambda_1-1}\Cbb_{\varepsilon_1-\varepsilon_2}^{\otimes k} 
\ -\ \Cbb_\lambda\otimes  \sum_{k\geq 1}\Cbb_{\varepsilon_1-\varepsilon_2}^{\otimes k} 
$$
and $Q_{\bar{x}_1}(\lambda)=\Cbb_\lambda\otimes \sum_{k\in\mathbb{Z}}\Cbb_{\varepsilon_1-\varepsilon_2}^{\otimes k}$. We recover the basic relation
$$
V_{\lambda}^{U(2)}\vert_{U(1)\times U(1)}=\Cbb_\lambda\otimes \sum_{k=\lambda_2-\lambda_1}^0\Cbb_{\varepsilon_1-\varepsilon_2}^{\otimes k} .
$$

\subsubsection{$U(n-1)\subset U(n)$}\label{sec:U(n)}

If we restrict the representation $V_{\lambda}^{U(n)}$ to the subgroup \break $U(n-1)$, we get 
\begin{equation}\label{eq:U-n-U-n-1}
V_{\lambda}^{U(n)}\vert_{U(n-1)}=Q_{0}(\lambda)+ Q_{1}(\lambda),
\end{equation}
where the characters $Q_{0}(\lambda),Q_{1}(\lambda)\in\wR(U(n-1))$ are given by the relations  
$$
Q_{0}(\lambda)=
\sum_{k=1}^{n}\ (\pm)_k\ 
{\rm Hol}^{U(n-1)}_{T'}\left(\Cbb_{\tau_{k,n}\bullet\lambda+\mu_k}\otimes {\rm Sym}(\mathbb{V}_k)\otimes {\rm Sym}(\overline{\mathbb{V}'_k})
\right),
$$
and 
$$
Q_{1}(\lambda)=(-1)^n{\rm det}\,  \otimes\left( Q_{1}^{n-1,n}(\lambda)-\sum_{1\leq k< n-1} Q_{1}^{k,n}(\lambda)+
\sum_{1\leq k<l\leq n-1}  Q_{1}^{k,l}(\lambda)\right),
$$
with $Q_{1}^{k,l}(\lambda)= {\rm Ind}^{U(n-1)}_{U(n-2)}
\left({\rm Hol}^{U(n-2)}_{T^1_{1}}\left(\Cbb_{\lambda_{kl}}\right)\right)$.

Let's detail expression (\ref{eq:U-n-U-n-1}) when $n=3$.

Small calculations give 
$Q_{0}(\lambda)=\mathbb{B}_1(\lambda)+\mathbb{B}_2(\lambda)+\mathbb{B}_3(\lambda)$
with 
\begin{eqnarray*}
   \mathbb{B}_1(\lambda)&=&{\rm Hol}^{U(2)}_{T}\left(\Cbb_{\tau_{1,3}\bullet\lambda+\mu_1}\right)\otimes{\rm Sym}(\overline{\Cbb^2})
= \sum_{\lambda_2-1\geq a\geq \lambda_3-1\geq b} V^{U(2)}_{(a,b)},\\
\mathbb{B}_2(\lambda)&=&{\rm Hol}^{U(2)}_{T}\left(\Cbb_{\tau_{2,3}\bullet\lambda+\mu_2}\otimes{\rm Sym}(\Cbb_{\epsilon_1})
 \otimes{\rm Sym}(\overline{\Cbb_{\epsilon_2}})
 \right)= \sum_{a\geq \lambda_1+1,\, \lambda_3-1\geq b} V^{U(2)}_{(a,b)},\\
  \mathbb{B}_2(\lambda)&=&{\rm Hol}^{U(2)}_{T}\left(\Cbb_{\tau_{3,3}\bullet\lambda+\mu_3}\right)\otimes{\rm Sym}(\Cbb^2)
 =  \sum_{a\geq \lambda_1+1\geq b\geq \lambda_2+1} V^{U(2)}_{(a,b)}.
\end{eqnarray*}

For the other term, we obtain $Q_{1}(\lambda)=\mathbb{A}_1(\lambda) - \mathbb{A}_2(\lambda)- \mathbb{A}_3(\lambda)$ with
\begin{eqnarray*}
  \mathbb{A}_1(\lambda)&=&\det\otimes\, Q_{1}^{1,3}(\lambda)=\det\otimes\,  {\rm Ind}^{U(2)}_{U(1)}
\left(\Cbb_{\lambda_{2}-1}\right)
= \sum_{a\geq \lambda_2\geq b} V^{U(2)}_{(a,b)},\\
\mathbb{A}_2(\lambda)&=&\det\otimes\, Q_{1}^{2,3}(\lambda)=\det\otimes\,  {\rm Ind}^{U(2)}_{U(1)}
\left(\Cbb_{\lambda_{1}}\right)
= \sum_{a\geq \lambda_1+1\geq b} V^{U(2)}_{(a,b)},\\
\mathbb{A}_3(\lambda)&=&\det\otimes\, Q_{1}^{1,2}(\lambda)=\det\otimes\,  {\rm Ind}^{U(2)}_{U(1)}
\left(\Cbb_{\lambda_{3}-2}\right)
=\sum_{a\geq \lambda_3-1\geq b} V^{U(2)}_{(a,b)} .
\end{eqnarray*}

Finally one checks that the decomposition 
$$
V_\lambda^{U(3)}\vert_{U(2)}=\mathbb{A}_1(\lambda)-\mathbb{A}_2(\lambda)-\mathbb{A}_3(\lambda)+ \mathbb{B}_1(\lambda) + \mathbb{B}_2(\lambda)+ \mathbb{B}_3(\lambda)
$$
permits to recover the classical relation $V_\lambda^{U(3)}\vert_{U(2)}=\sum_{\lambda_1\geq a\geq \lambda_2\geq b\geq\lambda_3} V^{U(2)}_{(a,b)}$ (see \cite{Goodman-Wallach}).

\begin{figure}[h]
    \centering
   \includegraphics[width=14cm]{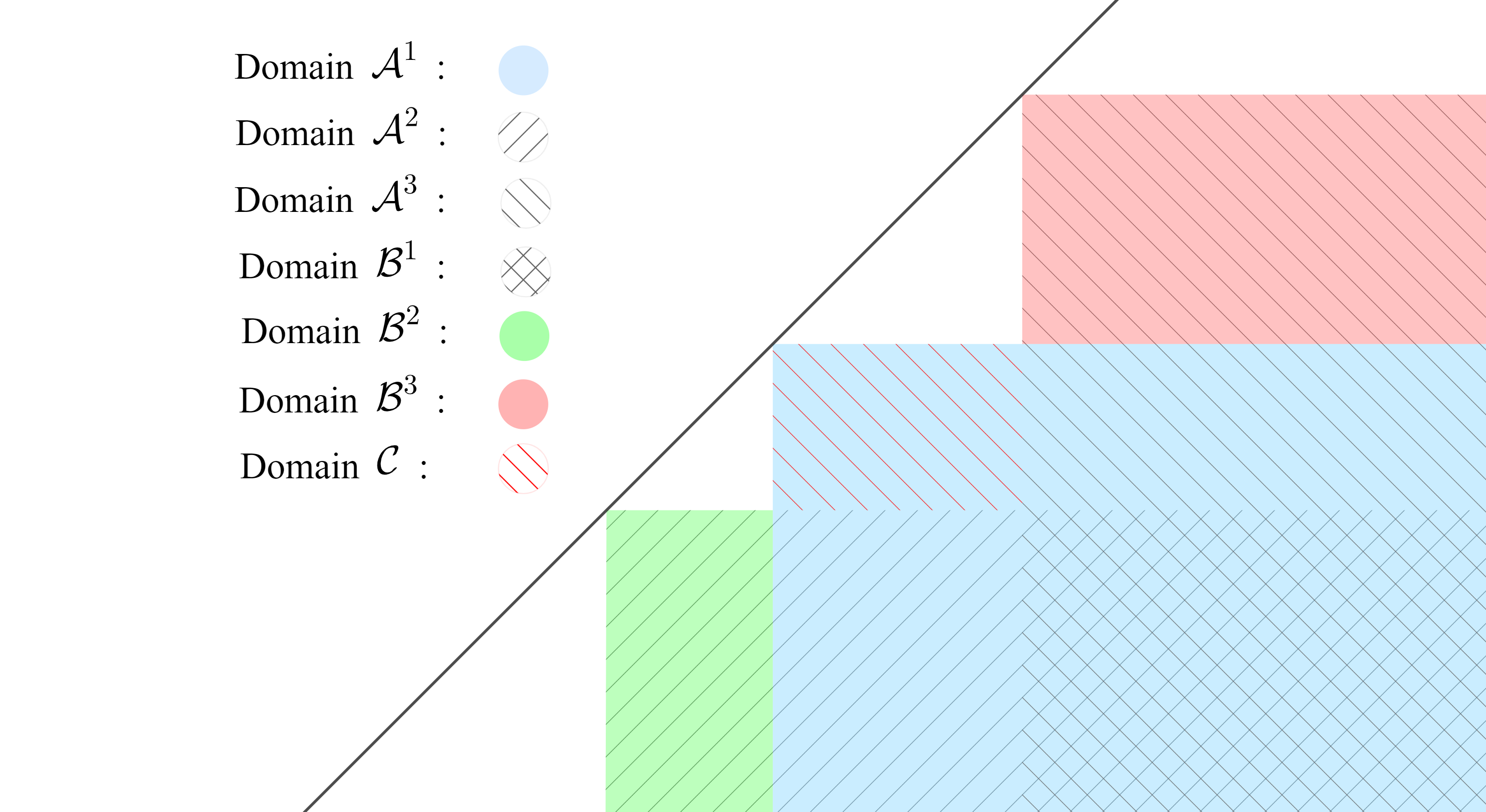} 
    \caption{Restriction from $U(3)$ to $U(2)$}
    \label{fig:U(3)-pep}
\end{figure}

In Figure \ref{fig:U(3)-pep}, we can visualise the supports of the differents characters: we have 

$$
\mathbb{A}_1(\lambda)=\sum_{\mu\in\mathcal{A}_1}V^{U(2)}_\mu,
\quad 
\mathbb{A}_2(\lambda)=\sum_{\mu\in\mathcal{A}_2\cup\mathcal{B}_1}V^{U(2)}_\mu,
\quad
\mathbb{A}_3(\lambda)=\sum_{\mu\in\mathcal{A}_3\cup\mathcal{B}_1}V^{U(2)}_\mu,\quad
$$

$$
\mathbb{B}_1(\lambda)=\sum_{\mu\in\mathcal{B}_1}V^{U(2)}_\mu,
\quad 
\mathbb{B}_2(\lambda)=\sum_{\mu\in\mathcal{B}_2}V^{U(2)}_\mu,
\quad
\mathbb{B}_3(\lambda)=\sum_{\mu\in\mathcal{B}_3}V^{U(2)}_\mu,\quad
$$
so that $V_\lambda^{U(3)}\vert_{U(2)}=\sum_{\mu\in\mathcal{C}}V^{U(2)}_\mu$.

\section{Kostant multiplicity formula}\label{sec:Kostant}

The aim of this section is first to recall the Kostant multiplicity formula : we follow the line of \cite{Goodman-Wallach}, Section 8.2. Then, we rewrite it in a form similar to the one we use in this article (see Proposition \ref{prop:kostant-formula}). Finally, we detail Kostant's multiplicity formula for the restriction of $U(n)$ to $U(n-1)$, in order to compare it with 
the calculations done in Section \ref{sec:U(n)}.

Let $G'\subset G$ be two connected compact Lie groups with maximal tori 
$T'\subset T$. The corresponding Lie algebras are $\tgot\subset\ggot$ and $\tgot'\subset\ggot'$. In this section, we make the following regularity assumption: 

\medskip 

$(R)$ \quad The centralizer $Z_\ggot(\tgot')$ of $\tgot'$ in $\ggot$ is abelian.

\medskip

We recall the following well-known fact.

\begin{lem}
The assumption $(R)$ is valid when $G'$ is the connected component of a fixed-point subgroup of an involution.
\end{lem}

{\em Proof :} Suppose that $G'=(G^\tau)_0$ for some involution $\tau$. Then, we have a decomposition $\ggot=\ggot'\oplus \qgot$  where $\qgot=\{X\in\ggot,\tau(X)=-X\}$. The centralizer 
$Z_\ggot(\tgot')$  is stable under the involution $\tau$ and under the adjoint action of $T$. Thus  $\tgot\subset Z_\ggot(\tgot')=\tgot'\oplus Z_\qgot(\tgot')$:  
in particular the torus $T$ is invariant under $\tau$.

If $Z_\ggot(\tgot')$ is not abelian, there exits a roots $\alpha\in\Rgot$ such that $(\ggot_\Cbb)_\alpha\subset  Z_\ggot(\tgot')_\Cbb=\tgot'_\Cbb\oplus Z_\qgot(\tgot')_\Cbb$. Then we obtain a contradiction: on one hand $(\ggot_\Cbb)_\alpha\subset  Z_\ggot(\tgot')_\Cbb$ implies that $\alpha\vert_{\tgot'}=0$ and on the other hand since  
$(\ggot_\Cbb)_\alpha\subset \qgot_\Cbb$, we must have $\sigma(\alpha)=\alpha$. The two conditions $\alpha\vert_{\tgot'}=0$ and $\sigma(\alpha)=\alpha$ implies that $\alpha=0$. $\Box$.

\medskip

Let $\Rgot$ and $\Rgot'$ and be the set of roots for the pairs $T\subset G$ and $T'\subset G'$. Note that assumption $(R)$ is equivalent to : 

\medskip 

$(R')$ \quad There exists $X_o\in \tgot'$ such that $\langle \alpha, X_o\rangle \neq 0$ for all $\alpha\in\Rgot$.

\medskip

If $\xi\in\tgot^*$ we write $\overline{\xi}$ for the restriction of $\xi$ to $(\tgot')^*$. Because of our assumption, $\overline{\alpha}\neq 0$ for all $\alpha\in\Rgot$.

The positive roots are  $\Rgot_+:=\{\alpha\in\Rgot, \langle\alpha,X_o\rangle>0\}$ and $\Rgot'_+:=\{\beta\in\Rgot', \langle\beta,X_o\rangle>0\}$. 
We write $\overline{\Rgot_+}:=\{\overline{\alpha}, \alpha\in\Rgot_+\}$ for the set of positive restricted roots: we keep track of the multiplicity $n_a=\#\{\alpha\in\Rgot_+,\overline{\alpha}=a\}$ of each element $a\in\overline{\Rgot_+}$.

Since $\Rgot'_+$ is contained in $\overline{\Rgot_+}$, we may consider the set of roots $\Sigma:=\overline{\Rgot_+}-\Rgot'_+$: the multiplicity of $\beta\in\overline{\Rgot_+}$ in $\Sigma$ is equal to 
$$
m_\beta:=
\begin{cases}
n_\beta\hspace{11mm} {\rm if}\quad \beta \notin \Rgot'_+,\\
n_\beta-1\quad {\rm if}\quad \beta \in \Rgot'_+.
\end{cases}
$$

Let $\Lambda'\subset(\tgot')^*$ be the lattice of weights for the torus $T'$. Let $(\tgot')_{+}^*$ be the Weyl chamber associated to the system $\Rgot'_+$. The irreducible representations of $G'$ are parameterized by $\Lambda'_+=\Lambda'\cap (\tgot')_+^*$.

\begin{defi} We denote by $\Pcal_\Sigma: \Lambda'\to \Nbb$ the partition function associated to the set $\Sigma$. For all $\xi'\in\Lambda'$, $\Pcal_\Sigma(\xi')$ is the number of way of writing $\xi'= \sum_{\beta\in\Sigma} x_\beta \beta$, where $x_\beta\in\Nbb$ and each $\beta$ that occurs is counted with multiplicity $m_\beta$.
\end{defi}

For dominant weights $\lambda\in\Lambda_+$ and $\mu\in\Lambda'_+$, we denote by 
$m_\lambda(\mu)$ the multiplicity of the irreducible $G'$-representation $V_\mu^{G'}$ with highest weight $\mu$ in  the irreducible $G$-representation $V_\lambda^G$ with highest weight $\lambda$.

If $w\in W$, we note $w\bullet\lambda:=w(\lambda+\rho)-\rho$ where $\rho$ is the half sum of the positive roots.

\begin{theo}
The branching multiplicities are
\begin{equation}\label{eq:kostant-1}
m_\lambda(\mu)=\sum_{w\in W}(-1)^w\, 
\Pcal_\Sigma(\overline{w\bullet\lambda}-\mu).
\end{equation}  
\end{theo}

We briefly recall  how to obtain (\ref{eq:kostant-1}). Let $\chi_\lambda^G$ be the character of $V_\lambda^G$. The Weyl relation gives 
$$
\chi_\lambda^G\vert_T \prod_{\alpha\in\Rgot_+}(1-e^{-\alpha})=\sum_{w\in W}(-1)^w \, e^{w\bullet \lambda}.
$$
When we restrict this identity to $T'\subset T$, the relations
$$
\prod_{\alpha\in\Rgot_+}(1-e^{-\alpha})\vert_{T'}=
\prod_{\beta\in\Rgot'_+}(1-e^{-\beta})
\prod_{\gamma\in\Sigma}(1-e^{-\gamma})
$$
and
$$
\prod_{\gamma\in\Sigma}(1-e^{-\gamma})\left(\sum_{\xi'\in\Lambda'}
\Pcal_\Sigma(\xi')\, e^{-\xi'}\right)=1
$$
permit to obtain 
\begin{eqnarray*}
\chi_\lambda^G\vert_{T'}\prod_{\beta\in\Rgot'_+}(1-e^{-\beta})
&=&\left(\sum_{w\in W}(-1)^w \, e^{\overline{w\bullet \lambda}}\right)
\left(\sum_{\xi'\in\Lambda'}
\Pcal_\Sigma(\xi')\, e^{-\xi'}\right)  \\
&=& \sum_{\xi'\in\Lambda'} N_\lambda(\xi') e^{\xi'}.
\end{eqnarray*}
with $N_\lambda(\xi'):=\sum_{w\in W}(-1)^w \, \Pcal_\Sigma(\overline{w\bullet \lambda}-\xi')$.

On the other hand, we have the decomposition
$\chi_\lambda^G\vert_{H}=\sum_{\mu\in\Lambda'_+}m_\lambda(\mu)\chi_\mu^{G'}$ and then\footnote{Here $w'\bullet\xi':=w'(\lambda+\rho')-\rho'$ where $\rho'=\frac{1}{2}\sum_{\beta\in\Rgot'_+}\beta$.} 
\begin{eqnarray*}
\chi_\lambda^G\vert_{T'}\prod_{\beta\in\Rgot'_+}(1-e^{-\beta})&=&
\sum_{\mu\in\Lambda'_+}m_\lambda(\mu)\,\chi_\mu^{G'}\prod_{\beta\in\Rgot'_+}(1-e^{-\beta})\\
&=&\sum_{\mu\in\Lambda'_+}\sum_{w'\in W'}(-1)^{w'} m_\lambda(\mu)\, e^{w'\bullet \mu}.
\end{eqnarray*}
Finally, we obtain the identity
$$
\sum_{\xi'\in\Lambda'} N_\lambda(\xi') e^{\xi'}=\sum_{\mu\in\Lambda'_+}
\sum_{w\in W'}(-1)^{w'} m_\lambda(\mu)\, e^{w'\bullet\mu},
$$
that shows two things:
\begin{itemize}
    \item $N_\lambda(\mu)=m_\lambda(\mu)$ if $\mu$ is dominant,
    \item $N_\lambda(w'\bullet\xi')=(-1)^{w'} N_\lambda(\xi')$, for every 
    $(w',\xi')\in W'\times \Lambda'$.
\end{itemize}

At this stage, we have proved Kostant's multiplicity formula. In the following we rewrite this formula in another form. Let's consider the following $T'$-module  
$$
\ngot_\Sigma:=\bigoplus_{\beta\in\Sigma} \Cbb_{-\beta}.
$$

\begin{prop}\label{prop:kostant-formula}
    For any $\lambda\in\Lambda_+$, we have the following 
    restriction formula
    \begin{equation}\label{eq:kostant-2}
     V_\lambda^G\vert_{G'}=\frac{1}{\# W'}\sum_{w\in W}(-1)^w \, 
   {\rm Hol}_{T'}^{G'}
    \left(\Cbb_{\overline{w\bullet \lambda}}\otimes{\rm Sym}(\ngot_\Sigma)\right).   
    \end{equation}
\end{prop}

{\em Proof:} Since ${\rm Sym}(\ngot_\Sigma)=\sum_{\xi'\in\Lambda'}\Pcal_\Sigma(\xi')\Cbb_{-\xi'}$ we have
\begin{eqnarray*}
\sum_{w\in W}(-1)^w \,\Cbb_{\overline{w\bullet\lambda}}\otimes{\rm Sym}(\ngot_\Sigma)
&=&
\sum_{w\in W}\sum_{\xi'\in\Lambda'}(-1)^w \,\Pcal_\Sigma(\xi')\,
\Cbb_{\overline{w\bullet \lambda}-\xi'}\\
&=& \sum_{\xi'\in\Lambda'}N_\lambda(\xi') \,\Cbb_{\xi'}.
\end{eqnarray*}
Hence the right hand side of (\ref{eq:kostant-2}) is equal to 
$\frac{1}{\# W'}\sum_{\xi'\in\Lambda'}
N_\lambda(\xi') {\rm Hol}_{T'}^{G'}\left(\Cbb_{\xi'}\right)$. We use now the following facts:
\begin{itemize}
    \item[--] $N_\lambda(w'\bullet\xi') {\rm Hol}_{T'}^{G'}
    \left(\Cbb_{w'\bullet\,\xi'}\right)=
    N_\Sigma(\xi') {\rm Hol}_{T'}^{G'}\left(\Cbb_{\xi'}\right)$ for every $(w',\xi')\in W'\times \Lambda'$.
    \item[--] $N_\lambda(\xi') {\rm Hol}_{T'}^{G'}\left(\Cbb_{\xi'}\right)=0$ if $\xi'\notin W'\bullet \Lambda'_+$.
    \item[--] $N_\lambda(\mu) {\rm Hol}_{T'}^{G'}\left(\Cbb_{\mu}\right)=m_\lambda(\mu) V^{G'}_\mu$ if $\mu\in \Lambda'_+$.
\end{itemize}
We have completed the proof of (\ref{eq:kostant-2}). $\Box$

\medskip

We conclude this section with a few examples.

\subsection{$K\subset K\times K$}
Let $K$ be a connected compact Lie group. Here we work with the Lie group $G = K\times K$ containg $K$ diagonally. Here $\Sigma\subset\tgot^*$ is equal to the set $\Rgot_+$ of positive roots for $K$. 
We denote by $\Pcal: \Lambda\to \Nbb$ the partition function associated to the set $\Rgot_+$. 

If $\lambda,\mu,\nu$ are three dominant weights, we denote by $c_{\lambda,\mu}^\nu$ the multiplicity of $V_\nu^K$ in $V_\lambda^K\otimes V_\mu^K$. The branching formula (\ref{eq:kostant-1}) becomes
$$
c_{\lambda,\mu}^\nu=\sum_{w_1,w_2\in W}(-1)^{w_1w_2}\, 
\Pcal(w_1\bullet\lambda+w_2\bullet\mu-\nu).
$$
This formula was first observed by Steinberg \cite{Steinberg-61}.

Let's take a closer look at the branching formula (\ref{eq:kostant-2}). The $T$-module $\ngot_\Sigma$ is equal to $\ngot:=\sum_{\alpha>0}\Cbb_{-\alpha}$. By definition of the holomorphic induction map ${\rm Hol}_{T}^{K}$, we have 
$$
    {\rm Hol}_{T}^{K}
    \left(\Theta\otimes{\rm Sym}(\ngot)\right)=(-1)^d \, {\rm Ind}_{T}^{K}
    \left(\Theta\otimes \Cbb_{2\rho}\right)
$$

for any $\Theta\in R(T)$, with $d=\frac{1}{2}\dim K/T$. Finally, (\ref{eq:kostant-2}) becomes
\begin{eqnarray*}
   V_\lambda^K\otimes V_\mu^K\vert_K &=& 
   \frac{(-1)^d}{\# W} \sum_{w_1,w_2\in W}(-1)^{w_1w_2}
{\rm Ind}_{T}^{K}\left(\Cbb_{w_1(\lambda+\rho)+ w_2(\mu+\rho)}\right) \\
&=& (-1)^d \sum_{w\in W}(-1)^{w}
{\rm Ind}_{T}^{K}\left(\Cbb_{w(\lambda+\rho)+ \mu+\rho}\right).
\end{eqnarray*}
 The latter formula is also obtained in (\ref{eq:general-Clebsch-Gordan}).

\subsection{$U(p)\times U(q)\subset U(n)$}

In this example the torus $T$ of diagonal matrices is the maximal torus for both $U(n)$ and the subgroup $U(p)\times U(q)$. Here the $T$-module $\ngot_\Sigma$ is the $T$-restriction of the $U(p)\times U(q)$-module $(\Cbb^p)^*\otimes\Cbb^q$, and the quotient $W'\backslash W$ is isomorphic to the subset ${\rm Shuffle}(p,q)$ formed by the elements $w\in \Sgot_{n}$ satisfying $w(1)<\cdots<w(p)$ and $w(p+1)<\cdots<w(p+q)$. Here, the branching formula (\ref{eq:kostant-2}) gives
\begin{equation}\label{eq:kostant-U-p-q}
 V_\lambda^{U(n)}\vert_{U(p)\times U(q)}=\left(\sum_{w\in {\rm Shuffle}(p,q)} (-1)^w \,
{\rm Hol}_{T}^{U(p)\times U(q)}(\Cbb_{w\bullet\lambda})\right) \otimes 
{\rm Sym}((\Cbb^p)^*\otimes\Cbb^q).   
\end{equation}

Let's consider the case $q=1$. From (\ref{eq:kostant-U-p-q}), we derive the following branching formula for the restriction to the subgroup $U(n-1)$:
$$
V_\lambda^{U(n)}\vert_{U(n-1)}=\left(\sum_{k=1}^n (-1)^{n-k}\, V^{U(n-1)}_{\lambda[k]}\right) \otimes {\rm Sym}((\Cbb^{n-1})^*),
$$
with $\lambda[n]=(\lambda_1,\ldots,\lambda_{n-1})$ and $\lambda[k]=(\lambda_1,\ldots,\lambda_{k-1},\lambda_{k+1}-1,\ldots,\lambda_{n}-1)$ for $1\leq k\leq n-1$.

\begin{figure}[h]
    \centering
   \includegraphics[scale=2]{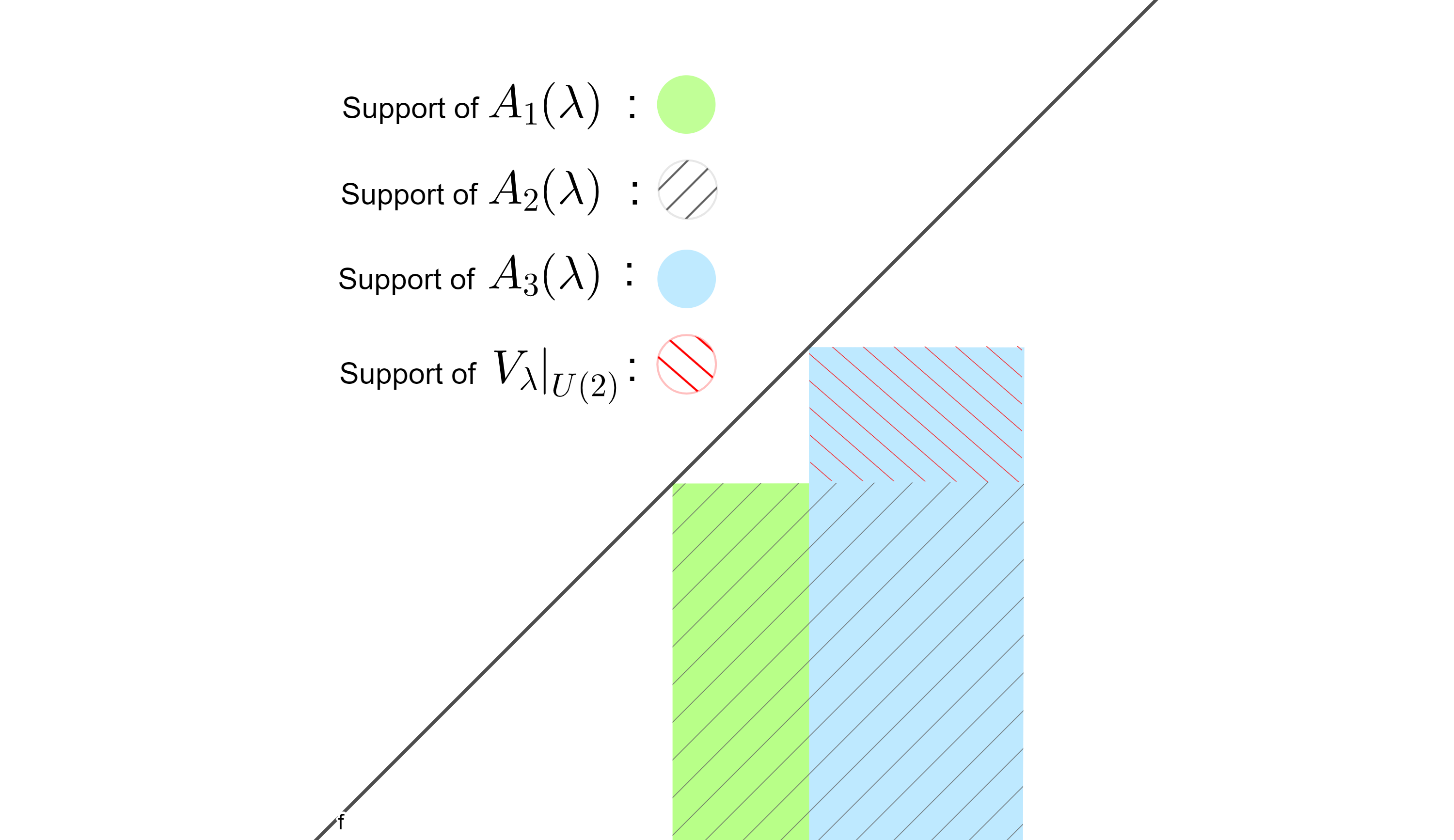} 
    \caption{Kostant decomposition}
    \label{fig:U(3)-Kostant}
\end{figure}

Let's consider the case $n=3$. For any $\lambda=(\lambda_1\geq\lambda_2\geq\lambda_3)$ we obtain the following formula
$$
V_\lambda^{U(3)}\vert_{U(2)}=\left(V^{U(2)}_{\lambda[3]}-V^{U(2)}_{\lambda[2]}+V^{U(2)}_{\lambda[1]}\right) \otimes {\rm Sym}((\Cbb^{2})^*).
$$
Hence $V_\lambda^{U(3)}\vert_{U(2)}=A_1(\lambda)-A_2(\lambda)+A_3(\lambda)$ with 
\begin{eqnarray*}
   A_3(\lambda) &=& V^{U(2)}_{\lambda[3]} \otimes {\rm Sym}((\Cbb^{2})^*)=
\sum_{\lambda_1\geq a\geq\lambda_2\geq b} V^{U(2)}_{(a,b)} ,\\
   A_2(\lambda) &=& V^{U(2)}_{\lambda[2]} \otimes {\rm Sym}((\Cbb^{2})^*)=
\sum_{\lambda_1\geq a\geq \lambda_3-1\geq b} V^{U(2)}_{(a,b)},\\
   A_1(\lambda) &=& V^{U(2)}_{\lambda[1]} \otimes {\rm Sym}((\Cbb^{2})^*)=
\sum_{\lambda_2-1\geq a\geq \lambda_3-1\geq b} V^{U(2)}_{(a,b)}.
\end{eqnarray*}
We recover the classical branching formula $V_\lambda^{U(3)}\vert_{U(2)}:=\sum_{\lambda_1\geq a\geq\lambda_2\geq b\geq \lambda_3} V^{U(2)}_{(a,b)}$ (see \cite{Goodman-Wallach}, section 8.1). In Figure \ref{fig:U(3)-Kostant}, one can visualize the support of each characters $A_k(\lambda)$.


{\small

}


\begin{thebibliography}{99}

\bibitem{Atiyah74} {\sc M.F. Atiyah}, {\em Elliptic operators and
compact groups}, Lecture Notes in Mathematics {\bf 401}, Springer-Verlag, Berlin, 1974.

\bibitem{B-G-V} {\sc N. Berline}, {\sc E. Getzler} and {\sc M. Vergne}, 
{\em Heat kernels and Dirac operators}, Grundlehren {\bf 298}, Springer, Berlin, 1991.

\bibitem{Bremigan-Lorch} {\sc R. Bremigan} and {\sc J. Lorch}, {\em Orbit duality for flag manifolds}, Manuscripta Math. {\bf 109} (2002), 233--261.



\bibitem{Goodman-Wallach} {\sc R. Goodman} and {N. R. Wallach}, {\em Symmetry, representations, and invariants}, Graduate Texts in Mathematics {\bf 255}, Springer, 2009.


\bibitem{Guillemin-Sternberg84} {\sc V. Guillemin} and {\sc S. Sternberg}, {\em A normal form for the moment map}, Differential geometric methods in mathematical physics, {\bf 6} (1984), 161--175.


\bibitem{Ma-Zhang14} {\sc X. Ma} and {\sc W. Zhang}, 
{\em Geometric quantization for proper moment maps: the Vergne conjecture},  Acta Mathematica {\bf 212} (2014),  11--57.

\bibitem{Marle85} {\sc C.-M.  Marle}, {\em Mod\`ele d'action hamiltonienne d'un groupe de Lie sur une vari\'et\'e symplectique}, 
Rend. Sem. Mat. Univ. Politec. Torino {\bf 43} (1985), 227--251.

\bibitem{Matsuki79} {\sc T. Matsuki}, {\em The orbits of affine symmetric spaces under the action of minimal parabolic subgroups}, J. Math. Soc. Japan {\bf 31} (1979),
331--357.



\bibitem{Mi-Vi-Uz} {\sc K. Mirkovic}, {\sc T. Vilonen} and  {I. Uzawa},  
{\em Matsuki correspondence for sheaves}, Invent. Math. {\bf 109} (1992), 231--245.

\bibitem{pep-RR} {\sc P-E. Paradan}, {\em Localization of the Riemann-Roch character},   J. Functional Analysis  {\bf 187} (2001),  442--509.



\bibitem{pep-vergne:witten}{\sc P-E. Paradan}  and {\sc M. Vergne}, {\em Witten non abelian localization for equivariant $K$-theory and the $[Q,R]=0$ Theorem}, Memoirs of the A.M.S., Vol. 261, No. 1257, 2019.



\bibitem{Richardson-Springer90} {\sc R. W. Richardson} and {\sc T. A. Springer}, {\em The Bruhat order on symmetric varieties},
Geometriae Dedicata {\bf 35} (1990), 389--436.

\bibitem{Richardson-Springer93} {\sc R. W. Richardson} and {\sc T. A. Springer}, {\em Combinatorics and geometry of $K$-orbits on the flag manifold},
Contemporary  Mathematics {\bf 153} (1993), 109--142.

\bibitem{Rossmann79} {\sc W. Rossmann}, {\em The structure of semisimple symmetric spaces}, Can. J. Math {\bf 31} (1979), 157--180.



\bibitem{Steinberg-61}  {\sc R. Steinberg}, {\em A general Clebsch-Gordan theorem}, Bulletin A.M.S., {\bf 67} (1961), 406--407.



\bibitem{Uzawa90} {\sc  T. Uzawa}, {\em Invariant hyperfunction sections of line bundles}, Thesis, Yale University, 1990.






\end{thebibliography}
\end{document}